\theoremstyle{plain}
\newtheorem{thm}{Theorem}[section]
\newtheorem{lemma}[thm]{Lemma}
\newtheorem{prop}[thm]{Proposition}
\theoremstyle{definition}
\newtheorem{defn}[thm]{Definition}
\theoremstyle{remark}
\newtheorem{rema}[thm]{Remark}
\newtheorem{conjecture}[thm]{Conjecture}
\def\Q{{\mathbf Q}}
\def\R{{\mathbf R}}
\def\Z{{\mathbf Z}}
\def\C{{\mathbf C}}
\def\OO{\mathcal{O}}
\def\m{\mathfrak m}
\newcommand{\F}{\mathbf{F}}
\newcommand{\map}[4]{
  \begin{array}{ccc}
    #1 & \longrightarrow & #2 \\
    #3 & \longmapsto      & #4
  \end{array}
}
\renewcommand{\geq}{\geqslant}
\renewcommand{\leq}{\leqslant}
\title{Towards a function field version of Freiman's Theorem}
\newcommand*\samethanks[1][\value{footnote}]{\footnotemark[#1]}
\title{Towards a function field version of Freiman's Theorem}
\author{Christine Bachoc\thanks{
Institut de Mathématiques de Bordeaux, CNRS UMR
    5251, Université de Bordeaux, 351 cours de la Libération, 33400 Talence.} and
  Alain Couvreur\thanks{INRIA \& Laboratoire LIX, CNRS UMR 7161,
    \'Ecole Polytechnique, Université Paris Saclay, 
    91128 Palaiseau Cedex} and
  Gilles Z\'emor\samethanks[1]}
\begin{document}

\maketitle

\begin{abstract}
  We discuss a multiplicative counterpart of Freiman's $3k-4$ theorem
  in the context of a function field $F$ over an algebraically closed
  field $K$. Such a theorem would
  give a precise description of subspaces $S$, such that the space
  $S^2$ spanned by products of elements of $S$ satisfies
  $\dim S^2 \leq 3 \dim S-4$. We make a step in this direction by
  giving a complete characterisation of spaces $S$ such that
  $\dim S^2 = 2 \dim S$.  We show that, up to multiplication by a
  constant field element, such a space $S$ is included in a function
  field of genus $0$ or $1$.  In particular if the genus is $1$ then
  this space is a Riemann-Roch space.
\end{abstract}


\section{Introduction}

We are interested in linear analogues of addition theorems that occur
in field extensions $F/K$ of a base field $K$. If $S$ and $T$ are
finite-dimensional $K$-vector subspaces of $F$, we denote by $ST$
the $K$-linear span of the set of all products $st$, $s\in S$, $t\in T$.
The general purpose of this area of research is to characterise
subspaces $S$ and $T$ whose product $ST$ has unusually small
dimension: it is naturally inspired by one of the goals of additive combinatorics 
which is to characterise subsets $A,B$ of elements of a group that have
sumsets $A+B$ of small cardinality, where $A+B$ denotes the set of
elements $a+b$, $a\in A$, $b\in B$.

The first significant result in this direction is arguably due to
Hou, Leung and Xiang \cite{hlx02} and generalises the classical
addition theorem of Kneser \cite{kneser}. It essentially states that if
$\dim ST < \dim S + \dim T -1$, then the space $ST$ must be stabilised
by a non-trivial subfield of $F$.
A welcome feature of Hou et
al.'s theorem is that Kneser's original theorem can be recovered from
it, so that it is not only a transposition to the linear setting of
its additive counterpart, but it can also be seen as a generalisation.
Hou's Theorem was finally proved for all field extensions in
\cite{bszKneser}, and also studied in other algebras than field
extensions \cite{bl15,mz15}.
Linear versions of addition theorems were also studied in the somewhat
broader context of skew field extensions in
\cite{el09}. {Many applications of the theory of
  products of spaces in the algebra $\F_q^n$ with componentwise
  multiplication are discussed in \cite{hugues}.} 

A common feature of {many of} the above works is that they tend to focus on
highlighting the existence of finite dimensional subfields or
subalgebras, whenever $\dim ST < \dim S + \dim T -1$.  In contrast, in
\cite{bszVosper} field extensions $F/K$ are studied where there are no
subextensions of $K$ in $F$ ($K$ is algebraically closed in $F$) in
which case one always has $\dim ST \geq \dim S +\dim T -1$ whenever
$ST\neq F$ \cite{el09}. The goal of \cite{bszVosper} was to prove that
the equality $\dim ST = \dim S +\dim T -1$ essentially implies that
$S$ and $T$ have bases in geometric progression: this is a linear
equivalent of Vosper's Theorem \cite{Vosper} which states that in a
group of prime order, or more generally an abelian group with no
finite subgroups, $|A+B|=|A|+|B|-1$ implies that $A$ and $B$ are
arithmetic progressions (with some degenerate cases ruled out).  It is
proved in \cite{bszVosper} that a linear version of Vosper's theorem
holds when the base field $K$ is finite, and for a number of other
base fields, but not for every field
$K$, even if it is assumed to be algebraically closed in $F$.
{This theory of
  subspaces with products of small dimension in field extensions has
  also recently found applications to coding theory in \cite{RRT18}.}

A particularly simple case for which a linear version of Vosper's
Theorem can be derived from an addition theorem, is when the base
field $K$ is itself assumed to be algebraically closed. The linear
theorem then follows almost directly from considering sets $A$ and $B$ of 
valuations of the field elements in $S$ and $T$ and arguing that $A$ 
and $B$ must satisfy an addition theorem. From this perspective it
becomes very natural to ask what can be said of the structure of
spaces $S$ such that
\begin{equation}
  \label{eq:cgenus}
  \dim S^2 =2\dim S -1 +\gamma
\end{equation}
for increasing values of $\gamma$. We have switched to the symmetric
situation $S=T$ for the sake of simplicity.

In the additive case, recall Freiman's
``$3k-4$'' Theorem \cite{Freiman}, \cite[Th. 5.11]{TaoVu}, which says
that in a torsion-free abelian group,
  $$|A+A|=2|A|-1+\gamma$$
  implies, when $\gamma\leq |A|-3$, that $A$ is included in an
  arithmetic progression of length $|A|+\gamma$ (i.e. $A$ is a
  progression with at most $\gamma$ missing elements).
  {The full Freiman Theorem, which extends the above
    $3k-4$ version, is arguably a cornerstone of additive
    combinatorics and has inspired a lot of subsequent work (see
    e.g. \cite{TaoVu}).  In this light, tackling the characterisation
    of spaces satisfying \eqref{eq:cgenus} would be a welcome addition
    to the burgeoning theory of space products in extension fields.}

Candidates for spaces $S$ satisfying \eqref{eq:cgenus} are of course
subspaces (of codimension at most $\gamma$) inside a space that has a
basis in geometric progression.  However, some thought yields
alternative spaces that do not have an additive analogue when
$\gamma\geq 1$: namely Riemann-Roch spaces $L(D)$ of an algebraic
curve of genus $\gamma$, which {can be} seen to
satisfy \eqref{eq:cgenus}.  It is tempting to conjecture that, in the
case when the base field $K$ is algebraically closed, any space
satisfying \eqref{eq:cgenus} {with
  $\gamma\leq\dim S-3$} is, up to multiplication by a constant, a
subspace of codimension $t$ inside a Riemann-Roch space of an
algebraic curve of genus $g$, with $t+g\leq \gamma$. With this in
mind, let us call the quantity $\gamma$ in \eqref{eq:cgenus} the {\em
  combinatorial genus} of $S$.  In the present paper we make a modest
contribution towards this hypothesis by proving it in the case when
$\gamma=1$.

We will use a blend of combinatorial and algebraic methods. The paper
is organised as follows: {Section~\ref{sec:examples}
  starts with a discussion of concrete examples of spaces with small
  products.}  Section~\ref{sec:valuations} recalls basic properties of
valuations that will in particular associate sets of integers with
small sumsets to subspaces with products of small dimension.
Section~\ref{sec:freiman} proves
{Theorem~\ref{thm:freiman_field}}, an extension field
version of ``Freiman's Lemma'' where the transcendence degree plays a
role analogous to the rank of a set of elements of a torsion-free
abelian group.  Section~\ref{sec:products} introduces a lattice of
subspaces that we shall rely on heavily, and illustrates its
usefulness by characterising spaces with combinatorial genus equal to
zero.  Section~\ref{sec:divisors} recalls basic properties of
Riemann-Roch spaces and states the paper's main result,
Theorem~\ref{thm:main}.

Section~\ref{sec:proof_main} proves Theorem~\ref{thm:main}.
Section~\ref{sec:genus0} complements Theorem~\ref{thm:main}
by giving a precise characterisation of those subspaces of
Riemann-Roch spaces that have combinatorial genus equal to $1$.
{
Finally, Section~\ref{sec:perfect} extends Theorems~\ref{thm:freiman_field}
and~\ref{thm:main} to
the case when the base field is perfect rather than algebraically
closed.
}

\section{{Motivating Examples}}\label{sec:examples}
{
Let $K$ be a field and consider the field $F=K(x)$ of rational
functions over $K$. Suppose we want a $K$-vector subspace $S$ of
dimension $k$ such that $S^2$ has the smallest possible dimension.
A natural candidate is the space $S$ generated by the geometric
progression $1,x,x^2,\ldots ,x^{k-1}$ for which we have $\dim
S^2=2k-1$. 
We notice that the set $A$ of degrees of the rational functions 
(in this example polynomials in $x$) of $S$ is an arithmetic
progression. More generally, the set of degrees of the functions in
$S^2$ must contain $A+A$, so that $\dim S^2\geq |A+A|$. This remark
may be used to claim that if $\dim S^2$ is the smallest possible,
namely $2k-1$, then
$|A+A|$ must be as small as possible, which implies that $A$ must be
an arithmetic progression of integers, from which it is fairly straightforward to
deduce that $S$ must have a basis of elements in geometric
progression.
We will make the point below that this line of reasoning extends to
other extension fields $F$ of $K$, provided we have {\em valuations}
at our disposal to generalise degrees of rational functions.}

{
Next relax slightly the condition on $\dim S^2$ to $\dim S^2 \leq 2k$.
To construct examples of such spaces we may consider $S$ in the rational
function field $K(x)$
generated by $1,x^2,x^3,\ldots ,x^k$. These spaces are directly inspired from
the sets of integers $A=\{0,2,3,\ldots ,k\}$ such that $|A+A|=2|A|$.
However, we have additional examples of such spaces that have no
direct additive counterpart: take $K$ to be the field of complex
numbers (say) and take $F$ to be the algebraic extension $K(x,y)$ of
the rational function field $K(x)$ where $y$ satisfies the equation
$y^2-x^3+x=0$. Now consider the space $S$ of dimension $5$ generated by
$1,x,y,x^2,xy$. It is readily checked that we have $\dim S^2=2\dim S$.
The space $S$ is an example of a Riemann-Roch space of the algebraic curve
of equation $y^2-x^3+x=0$ which is elliptic or of genus $1$. Our main
result, namely Theorem~\ref{thm:main}, will tell us that these two
examples are in some sense generic. This has motivated the following
definition, and also the conjecture below:
}
\begin{defn}\label{def:gamma}
  {
  Let $K$ be a field and $F$ be a $K$--algebra. Let $S \subset F$ be a 
  finite dimensional $K$--subspace of $F$. The {\em combinatorial
  genus of $S$} is defined as the integer $\gamma$ such that
  $$
  \dim S^2 = 2\dim S - 1 + \gamma.
  $$
  }
\end{defn}


\begin{conjecture}\label{conjecture}
{
  Let $K$ be an algebraically closed field and let $F$ be an extension
  field of $K$. Let $S$ be a
  $K$-subspace of finite dimension in $F$ such that $K\subset S$. Let the combinatorial genus
  $\gamma$ of $S$ satisfy $\gamma\leq \dim S -3$. Then 
  the genus $g$
  of the field $K(S)$ satisfies $g\leq\gamma$ and there exists a Riemann-Roch space
  $L(D)$ that contains $S$ and such that
  $\dim L(D)\leq\dim S +\gamma -g$.
}
\end{conjecture}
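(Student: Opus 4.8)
The plan is to pass from $S$ to the curve it generates, extract sumset data at every valuation, and convert that data into a divisor controlled by Riemann--Roch. First, $\gamma \geq 0$: for $k := \dim S \geq 2$ the space $S^2$ is finite-dimensional, hence a proper subspace of $F$ (which is infinite-dimensional over the algebraically closed field $K$ because $S \not\subseteq K$), so $\dim S^2 \geq 2k - 1$ by the basic lower bound; together with $\gamma \leq k - 3$ this forces $k \geq 3$. Next apply the function field version of Freiman's Lemma (Theorem~\ref{thm:freiman_field}) to $F' := K(S)$: if $F'$ has transcendence degree $d$ over $K$, then $\dim S^2 \geq (d+1)k - \binom{d+1}{2}$, so $d \geq 2$ would force $\gamma \geq k - 2$, contradicting the hypothesis; since also $d \geq 1$ (as $S \not\subseteq K$ and $K$ is algebraically closed), we get $d = 1$. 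Thus $F'$ is a function field of one variable over $K$; write $g$ for its genus, and recall that then every place of $F'$ has residue field $K$.

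For the combinatorial core, fix a place $P$ of $F'$ with valuation $v_P$ and set $A_P := \{\, v_P(f) : f \in S \setminus \{0\} \,\} \subset \Z$. Since the residue field at $P$ is $K$, a basis of $S$ can be straightened so that its elements have pairwise distinct $P$-valuations, whence $|A_P| = k$; moreover $0 \in A_P$, because $1 \in S$. From $v_P(fg) = v_P(f) + v_P(g)$ we get $A_P + A_P \subseteq \{\, v_P(h) : h \in S^2 \setminus \{0\} \,\}$, so $|A_P + A_P| \leq \dim S^2 = 2k - 1 + \gamma$. As $\gamma \leq k - 3$, Freiman's $3k-4$ theorem applies to $A_P$: it is contained in an arithmetic progression of length at most $k + \gamma$. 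For all but finitely many places this progression has step $1$ and $A_P = \{0, 1, \dots, k-1\}$; the finitely many remaining places are where the ``defect'' of $S$ is concentrated.

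Now build a divisor. Put $m_P := -\min A_P \geq 0$, the largest pole order at $P$ of an element of $S$, and $D := \sum_P m_P\, P$; this is an effective divisor with $S \subseteq L(D)$. Because $m_P = -\min_{f \in S} v_P(f)$ by construction, the linear system $|S| \subseteq |D|$ is base-point-free, so $\deg D$ equals the degree $\nu$ of the image curve $\phi_S(C) \subseteq \P^{k-1}$; and since $1 \in S$ implies $K(\phi_S(C)) = K(S) = F'$, the map $\phi_S$ is birational onto its image, which is therefore an irreducible nondegenerate curve, forcing $\nu \geq k - 1$. The heart of the matter is to feed the local data above --- equivalently, the gaps of the sets $A_P$, which encode the ramification of $|S|$ --- into the reverse inequality $\nu \leq k - 1 + \gamma$. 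Granting this, $\nu$ lies in the first Castelnuovo range (since $\nu \leq 2k - 4$), so Castelnuovo's genus bound gives $g \leq \nu - (k - 1) \leq \gamma$; Clifford's inequality then forces $D$ to be non-special (otherwise $k \leq \dim L(D) \leq \nu/2 + 1 \leq k - 1$, absurd), so $\dim L(D) = \nu - g + 1 \leq k + \gamma - g$, which is the asserted bound.

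The main obstacle is exactly the globalisation in the last step: turning the family of \emph{local} arithmetic progressions $(A_P)_P$ into sharp control of the \emph{single} divisor $D$, i.e. obtaining $\deg D \leq k - 1 + \gamma$ rather than a weaker $O(\gamma)$ estimate, and also coping with progressions of step larger than $1$. This is where the combinatorics of valuations and the Riemann--Roch theorem must be combined most delicately --- through the lattice of subspaces of Section~\ref{sec:products}, and presumably an induction on $\gamma$ that removes one unit of defect at a time --- and it is why the present paper settles only $\gamma = 1$ (Theorem~\ref{thm:main}): there each $A_P$ is an interval with at most one missing element away from very few places, $|S|$ is forced to have degree $k - 1$ or $k$, and $g \in \{0, 1\}$ drops out directly, with $g = 1$ forcing $S = L(D)$ (a Riemann--Roch space on an elliptic curve) and $g = 0$ forcing $S$ to lie in codimension $\leq 1$ inside a space with a basis in geometric progression.
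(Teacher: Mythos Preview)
The statement you attack is labelled a \emph{conjecture} in the paper and is not proved there; the paper establishes only the cases $\gamma=0$ (Theorem~\ref{thm:genus0}) and $\gamma=1$ (Theorem~\ref{thm:main}). So there is no ``paper's own proof'' to compare to, and what you have written is, as you yourself say in the last paragraph, a strategy with an explicit gap rather than a proof.

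The conditional part of your argument is essentially sound: once one \emph{grants} $\deg D_S\le k-1+\gamma$, the map $\phi_S$ is birational onto a nondegenerate curve of degree $\nu=\deg D_S$ in $\P^{k-1}$ (here base-point-freeness of $|S|$ is immediate from the definition of $D_S$, and birationality from $K(S)=F'$), Castelnuovo's bound with $m=1$ gives $g\le\nu-(k-1)\le\gamma$, Clifford rules out speciality of $D_S$ since $\nu\le 2k-4<2k-2$, and Riemann--Roch yields $\dim L(D_S)=\nu+1-g\le k+\gamma-g$. This is a clean and attractive packaging, different in flavour from the paper's explicit computation of the equation of $F'$ in \S\ref{sec:genus}.

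The genuine gap is exactly where you locate it: the inequality $\deg D_S\le k-1+\gamma$. Your local input, Freiman's $3k-4$ theorem applied to each $A_P$, says only that $A_P$ lies in a progression of length $\le k+\gamma$. At a \emph{single} pole $P$ (with $\max A_P=0$) and step~$1$, this gives $m_P\le k-1+\gamma$; but it gives no control whatsoever on $\sum_P m_P$ when $S$ has poles at several places, and nothing prevents the same global defect $\gamma$ from being ``counted'' independently at each such place. This globalisation is precisely what the paper's lattice of subspaces (\S\ref{sec:products}) and the analysis of the increments $D_{S_{i+1}}-D_{S_i}$ (Lemma~\ref{lem:D_Si}) accomplish for $\gamma=1$: they show that the pole divisor grows by the \emph{same} degree-one divisor at every step of the filtration except one, which pins down $\deg D_S$ exactly. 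No analogue of that mechanism is known for general $\gamma$, and your outline does not supply one; so the proposal remains a heuristic, not a proof.
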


{The next section recalls some background on valuations
 with which we will derive our first results on spaces with small
 combinatorial genus.
}
\section{{Function fields, valuations}}\label{sec:valuations}

We start by recalling some basic facts about 
valuations in function fields that will be crucial to transferring
additive statements to the extension field setting.
We refer the reader to
\cite[Ch. 6]{Bourbaki_AlgComm5-6} for further details.

Let $K$ be a field, a {\em function field in $m$ variables over $K$}
is a field $F$ which is a finitely generated algebra of transcendence
degree $m$. Equivalently it is a finite extension of the field
$K(X_1, \ldots, X_m)$ of rational functions in $m$ variables.

We recall that such fields have {\em valuations} that map $F^{\times}$
to the elements of some ordered group. Valuations are multiplicative,
i.e. $v(xy) = v(x) + v(y)$ and satisfy the ultrametric inequality,
$v(x+y) \geq \min \{v(x), v(y)\}$ with equality when $v(x) \neq v(y)$.
The map $v$ is extended to $F$ with the convention $v(0)=\infty$.



A valuation comes with {\em
  a valuation ring} $\OO \subset F$ {which is defined
  as the set of functions of non-negative valuation, together with
  addition and multiplication inherited from $F$.
  A valuation ring has a unique maximal ideal $\m$ equal to the set of
  elements of positive valuation. The quotient $\OO/\m$ is called the
  {\em residue field} of the valuation ring.}

Let $S \subseteq F$ be a finite dimensional $K$--vector space of
dimension $n>0$.  Given a valuation $v$ on $F$ with residue field $K$,
we denote by $v(S)$ the set of valuations of the non-zero elements of $S$. We
recall the following classical result, and give a proof for the sake
of self--containedness.

\begin{prop}\label{prop:filtered_basis}
  The set $v(S)$ is finite and its cardinality equals $\dim S$.
  Moreover, there exists a basis $(e_1, \ldots, e_n)$ of $S$ such that
  $$v(e_1) > v(e_2) > \ldots > v(e_n) \quad {\rm and} \quad
  \{v(e_1), \ldots, v(e_n)\} = v(S).$$ Such a basis is referred to as
  a {\em filtered basis}. {
    In addition, $S$ has a natural filtration
  $$
  \{0\} \subset S_1 \subset \cdots \subset S_{n-1} \subset S_n
  $$
  such that
  \begin{equation}\label{eq:ineq_valuations}
  \min v(S_1) > \cdots > \min v(S_{n-1}) > \min v (S).
  \end{equation}
  For every $i=1\ldots n$, the space $S_i$ is generated by $e_1,\ldots
  ,e_i$, but the filtration is uniquely defined and does not depend on the choice of a filtered basis.
  }
\end{prop}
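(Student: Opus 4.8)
The plan is to reduce everything to one elementary observation about the residue field: if $x,y\in F^\times$ satisfy $v(x)=v(y)$, then $x/y\in\OO^\times$ and, since $\OO/\m=K$, the image of $x/y$ in the residue field is some $\bar\lambda\in K^\times$; lifting to $\lambda\in K^\times$ (constants have valuation $0$, hence lie in $\OO^\times$) gives $x/y-\lambda\in\m$, i.e. $v(x-\lambda y)=v(y)+v(x/y-\lambda)>v(y)$. In words: two elements of equal valuation can be combined by a constant so as to strictly raise the valuation. A first consequence is a linear independence lemma: if $x_1,\ldots,x_r\in S\setminus\{0\}$ have pairwise distinct valuations then they are $K$-linearly independent. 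Indeed, in a relation $\sum\lambda_ix_i=0$ the nonzero terms $\lambda_ix_i$ still have pairwise distinct valuations (because $v(\lambda_i)=0$), so among them there is a unique one of minimal valuation, and the ultrametric inequality with equality forces $v\left(\sum\lambda_ix_i\right)$ to be finite unless every $\lambda_i=0$. In particular $|v(S)|\le\dim S=n$, so $v(S)$ is finite; write its elements as $\mu_1>\mu_2>\cdots>\mu_r$ with $r\le n$.

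Next I would build the filtered basis and show $r=n$. Pick $e_i\in S$ with $v(e_i)=\mu_i$ for $1\le i\le r$; these are independent by the lemma. To see they span $S$, take $x\in S\setminus\{0\}$, say $v(x)=\mu_j$. The residue-field observation produces $\lambda\in K^\times$ with $v(x-\lambda e_j)>\mu_j$, so $x-\lambda e_j$ is either $0$ or has valuation $\mu_{j'}$ with $j'<j$. Iterating, the index strictly decreases at each step, and since there is no element of $v(S)$ exceeding $\mu_1$ the process must terminate at $0$; hence $x\in\langle e_1,\ldots,e_j\rangle$. Therefore $(e_1,\ldots,e_r)$ is a basis of $S$, so $r=n$ and $\{v(e_1),\ldots,v(e_n)\}=v(S)$, which proves the first two assertions.

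Finally, for the filtration, set $S_i=\langle e_1,\ldots,e_i\rangle$. For $x=\sum_{k\le i}\lambda_ke_k$ with $\lambda_i\ne0$ the nonzero terms have distinct valuations lying in $\{\mu_1,\ldots,\mu_i\}$, so $v(x)=\mu_i$; and if $\lambda_i=0$ then $x\in S_{i-1}$, whence $v(x)\ge\mu_{i-1}>\mu_i$. Thus $\min v(S_i)=\mu_i$, which gives \eqref{eq:ineq_valuations} since $\mu_1>\cdots>\mu_n$ and $\mu_n=\min v(S)$. The same computation shows $S_i=\{x\in S:\ v(x)\ge\mu_i\}$, which is genuinely a $K$-subspace (the ultrametric inequality and $v(\lambda x)=v(x)$ make it closed under addition and scalar multiplication); since the $\mu_i$ are nothing but the elements of $v(S)$ listed in decreasing order, this description makes no reference to the chosen basis, so the filtration is canonical. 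I do not expect a serious obstacle here: the only points needing care are the termination of the descending induction in the spanning step, and verifying that the intrinsic description $S_i=\{x\in S:v(x)\ge\mu_i\}$ is indeed independent of the filtered basis.
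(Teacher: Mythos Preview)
Your proof is correct and follows essentially the same approach as the paper: both rely on the residue-field trick (if $v(x)=v(y)$ then some $x-\lambda y$ has strictly larger valuation) to establish linear independence of elements with distinct valuations and then to show that representatives of each valuation span $S$. The only cosmetic differences are that the paper packages the spanning step as a contradiction (take a proper subspace $E$ with $v(E)=v(S)$ and an element of $S\setminus E$ of maximal valuation) whereas you iterate directly, and the paper defines the filtration top-down by $S_{i-1}=\{x\in S_i: v(x)>\min v(S_i)\}$ whereas you give the equivalent intrinsic description $S_i=\{x\in S: v(x)\ge\mu_i\}$.
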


\begin{proof}
  First notice that elements of $F$ with distinct finite valuations are
  linearly independent.
  Indeed, if $x_1, \ldots, x_k$ have distinct valuations, then so do
  $a_1x_1,\ldots ,a_kx_k$ for non-zero $a_i\in K$, 
since non-zero elements of $K$ have valuation $0$,
  so that $v(a_1 x_1 + \cdots + a_k x_k)=\min(v(x_1),\ldots v(x_k))$
  must be finite, meaning 
  $a_1 x_1 + \cdots + a_k x_k$ must be non-zero.
   This shows that $|v(S)| \leq \dim S$.
   
  Now let $E$ be a subspace of $S$ such that $v(E)=v(S)$ and suppose
  $E\subsetneqq S$. Let $x$ be an element of $S\setminus E$ with
  maximal valuation in $\{v(s), s\in S\setminus E\}$. Let $e\in E$ be
  such that $v(e)=v(x)$.  Then, $xe^{-1} \in \OO^\times$
  and since the residue field is $K$, there exists $\lambda \in K$
  such that $xe^{-1} \equiv \lambda \mod \m$. Therefore, 
  $x  -\lambda e$ has a valuation larger than $v(x)$, a
  contradiction. Therefore $E=S$, meaning that we have $\dim
  S=|v(S)|$. Choosing any $n$ elements of $S$ with distinct finite
  valuations yields a filtered basis.

  {
  Finally, the filtration $S_1 \subset \cdots \subset S_n = S$ is iteratively constructed as follows,
  $$
  S_{i-1} = \{x \in S_i ~|~ v(x) > \min v(S_i)\}.
  $$
note that this definition is independent of the choice of a filtered
basis, however one checks easily that $S_{i-1}$ is spanned by $e_1, \ldots, e_{i-1}$.
  This shows that the space $S_{i-1}$ has codimension $1$ in $S_i$ and the sequence 
  of inequalities~(\ref{eq:ineq_valuations}) follows immediately from the definition of the $S_i$'s.
  }
\end{proof}

The following lemma is elementary but fundamental to the study of
the structure of products $S^2$ of small dimension.
It enables us to involve theorems from additive combinatorics.

\begin{lemma}\label{lem:v(H^2)}
For any valuation $v$ on $F$, and any $K$--subspace $S$,
\begin{equation}\label{eq:valuation_product}
v(S)+v(S) \subseteq v(S^2).
\end{equation}  
\end{lemma}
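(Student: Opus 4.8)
The plan is to deduce the inclusion directly from the two defining properties of a valuation on a field, namely multiplicativity, $v(xy)=v(x)+v(y)$, and the absence of zero divisors (since $F$ is a field). So I do not expect any genuine obstacle here: the statement is essentially a one-line verification, and the only care needed is to unwind the definitions of $v(S)$, $v(S^2)$ and $S^2$ correctly.

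Concretely, I would fix an arbitrary element $c\in v(S)+v(S)$ and write $c=a+b$ with $a,b\in v(S)$. By the definition of $v(S)$ there exist nonzero $s,t\in S$ with $v(s)=a$ and $v(t)=b$ (no filtered basis is needed; any representatives achieving the prescribed valuations will do). Then $st$ lies in $S^2$, because $S^2$ is by definition the $K$-span of all products of pairs of elements of $S$ and in particular contains each individual product $st$. Moreover $st\neq 0$ since $F$ has no zero divisors, so $v(st)$ is a bona fide element of $v(S^2)$, and multiplicativity gives $v(st)=v(s)+v(t)=a+b=c$. Hence $c\in v(S^2)$, which establishes $v(S)+v(S)\subseteq v(S^2)$.

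Although the argument is trivial, it is worth recording why the lemma is called fundamental: combined with Proposition~\ref{prop:filtered_basis}, which yields $|v(S)|=\dim S$ and, when the residue field of $v$ is $K$, also $|v(S^2)|=\dim S^2$, it gives the key inequality $\dim S^2\geq |v(S)+v(S)|$. Consequently every lower bound on the sumset $v(S)+v(S)$ supplied by additive combinatorics — Freiman-type bounds in particular — transfers into a lower bound on $\dim S^2$, and this is precisely the transfer mechanism exploited throughout the rest of the paper.
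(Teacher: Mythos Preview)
Your proof is correct and is precisely the elementary verification the paper has in mind: the paper states the lemma without proof, calling it ``elementary but fundamental'', so your one-line argument from multiplicativity and the absence of zero divisors is exactly what is intended. Your additional paragraph explaining the role of the lemma via Proposition~\ref{prop:filtered_basis} is also accurate and matches how the paper uses it.
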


\begin{rema}
Note that this inclusion is not necessarily an equality. For instance,
consider the subspace $H$ of $K(x)$ of basis
$1, x, x^2, x^3 + \frac 1 x$ and the valuation $v$ at infinity. Then
$v(H) = \{0,-1,-2,-3\}$ while
$H^2 = \langle \frac 1 x, 1, x, x^2, \ldots, x^5, x^6+\frac 1 {x^2}
\rangle$
whose valuation set contains $v(1/x) = 1$ which is not in $v(H)+v(H)$.  
\end{rema}

{From now on and
  until the end of Section~\ref{sec:genus0} (with a temporary exception in
  \S~\ref{subsec:divisors}), we suppose that the base field $K$
  is algebraically closed. Note that this assumption entails that any valuation on $F$
  has residue field $K$, which will therefore enable us to apply
  Proposition~\ref{prop:filtered_basis}. 
Only in Section~\ref{sec:perfect} will we consider what becomes of our
results when the base field $K$ is not algebraically closed.
}

\section{Transposing Freiman's Lemma in field extensions}
\label{sec:freiman}
Recall the following result of Freiman \cite{Freiman}, named
``Freiman's Lemma'' by Tao and Vu \cite[Lemma 5.13]{TaoVu}.

\begin{thm}\label{thm:freiman}
  Let $A$ be a finite subset of $\R^d$ such that no hyperplane of
  $\R^d$ contains a translate of $A$. Then 
  $$|A+A| \geq (d+1)|A| -d(d+1)/2.$$
\end{thm}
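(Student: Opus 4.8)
The plan is to prove Freiman's Lemma (Theorem~\ref{thm:freiman}) by induction on the dimension $d$, slicing $A$ by a hyperplane so as to peel off a lower-dimensional layer and applying the inductive hypothesis to it. Concretely, after translating we may assume $0 \in A$ and that $A$ affinely spans $\R^d$. First I would choose a linear functional $\varphi \colon \R^d \to \R$ that is injective on the finite set $A$ (a generic one works, since $A$ is finite), and order the elements of $A$ as $a_1, \dots, a_n$ with $\varphi(a_1) < \varphi(a_2) < \cdots < \varphi(a_n)$; the goal is to control $|A+A|$ by exhibiting many distinct sums via this ordering.

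The key step is the following. Let $H$ be the affine hyperplane through the ``top'' points: more precisely, let $A' = A \cap \pi$ where $\pi$ is a supporting hyperplane of the convex hull of $A$ chosen so that $A'$ has affine dimension exactly $d-1$ and $A \setminus A'$ is nonempty — such a hyperplane exists because $A$ spans $\R^d$. Write $A'' = A \setminus A'$, $k' = |A'|$, $k'' = |A''|$, so $n = k' + k''$. Now I would split $A+A$ into three parts according to how many summands lie in $A'$: the sums $A'+A'$, which lie in the hyperplane $2\pi$ (a translate of the direction of $\pi$) and satisfy $|A'+A'| \geq d|A'| - d(d-1)/2$ by the inductive hypothesis applied in dimension $d-1$; the sums $A'+A''$; and the sums $A''+A''$. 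The second and third families lie in strictly ``lower'' affine strata than $2\pi$ with respect to $\varphi$, hence are disjoint from $A'+A'$ and from each other after a suitable further stratification. For the cross and bottom terms I would use the elementary one-dimensional fact (Cauchy–Davenport over $\Z$ / the $|A+B| \geq |A|+|B|-1$ bound for finite sets of reals) applied along the $\varphi$-direction, giving at least $k' + k'' - 1$ new sums from $A' + a$ for the largest $a \in A''$, and then at least one genuinely new sum for each of the remaining $k'' - 1$ elements of $A''$, contributing $k'$ and then $k''-1$; assembling $d k' - d(d-1)/2 + k' + (k'' - 1) = (d+1)k' + k'' - 1 - d(d-1)/2$ and checking this is at least $(d+1)n - d(d+1)/2$ reduces to $k' \geq$ something, which needs care.

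The bookkeeping in the previous paragraph is where the real work lies, and I expect the main obstacle to be getting the counting of ``new'' sums in the cross term $A' + A''$ tight enough — a naive split loses a linear-in-$n$ term and only yields a weaker constant. The clean fix, which I would actually carry out, is to induct not on a single hyperplane slice but to order all of $A$ by $\varphi$ and, for the point $a_j$, count the sums $a_j + a_i$ with $a_i$ ranging over an affinely independent ``witness set'' inside $\{a_1, \dots, a_j\}$: because $A$ spans $\R^d$, one can arrange that as $j$ increases past $d+1$ each new $a_j$ still sees at least $d+1$ strictly larger-than-previously-seen sums (in the lexicographic order induced by first $\varphi$ then a secondary generic functional), while the first several points are handled by the base case $|A+A| \geq (d+1) \cdot \frac{(d+1)(d+2)}{2} \big/ \dots$ — more simply, the first $d+1$ points already force $\binom{d+2}{2}$ sums. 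Summing $d+1$ per point for the remaining $n - (d+1)$ points and adding the $\binom{d+2}{2}$ from the initial segment gives exactly $(d+1)(n-d-1) + \binom{d+2}{2} = (d+1)n - d(d+1)/2$, as required.

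The base case $d = 1$ is just $|A+A| \geq 2|A| - 1$ for finite $A \subset \R$, which is immediate from ordering $A$. The only genuinely delicate point, to be checked honestly in the full proof, is the claim that the lexicographic refinement of $\varphi$ can be chosen so that every point beyond the initial affinely-independent segment contributes $d+1$ fresh sums; this uses that $A$ affinely spans $\R^d$ together with a careful choice of which $d+1$ earlier points serve as witnesses for each $a_j$.
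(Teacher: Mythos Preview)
The paper does not supply a proof of this theorem: it is quoted as a known result (``Freiman's Lemma''), attributed to Freiman with a pointer to \cite[Lemma~5.13]{TaoVu}, and then used as a black box to derive Theorem~\ref{thm:freiman_field}. There is therefore nothing in the paper to compare your attempt against.

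Regarding your sketch on its own merits: the two strategies you outline (induction via a supporting hyperplane; ordering $A$ and counting new sums step by step) are both standard routes to the result, and the arithmetic $(d+1)(n-d-1)+\binom{d+2}{2}=(d+1)n-d(d+1)/2$ is correct. But the second approach, which you commit to, has a concrete gap. Ordering $A$ by a generic linear functional $\varphi$ does \emph{not} allow you to assume that the first $d+1$ points are affinely independent: take $A=\{(0,0),(1,0),(2,0),(3,0),(0,1)\}\subset\R^2$ with $\varphi(x,y)=x+\varepsilon y$; the first three points in the induced order are collinear, so your base count of $\binom{d+2}{2}$ initial sums is unjustified. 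A secondary functional cannot repair this, since a generic $\varphi$ is already injective on $A$ and leaves no ties to break. The usual fix is to order $A$ by repeatedly peeling off a vertex of the convex hull of the remaining points that is not affinely essential (one checks such a vertex exists whenever more than $d+1$ points remain); with that ordering the first $d+1$ points are forced to be affinely independent, and the extremality of $a_j$ in $\mathrm{conv}\{a_1,\dots,a_j\}$ is what actually delivers $d+1$ new sums at each later step. None of this comes for free from a single linear functional, so the ``clean fix'' you describe does not in fact bypass the bookkeeping you flagged in your first approach.
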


Let $S$ be a $K$-vector space inside a field extension $L$ of a field $K$.
We remark that for a non-zero element $s$ of $S$, the field
subextension $K(Ss^{-1})$ of $L$ is independent of the choice of the
element $s$. Let us call the {\em transcendence degree of $S$} the
transcendence degree of $K(Ss^{-1})$ over $K$. Similarly, by the {\em
  genus of $S$} we will mean the genus of the field extension
$K(Ss^{-1})/K$
(see Theorem~\ref{thm:main} in Section~\ref{sec:Vosper+1}).

We have the extension field analogue of Theorem~\ref{thm:freiman}.

\begin{thm}\label{thm:freiman_field}
  Let $K$ be {an algebraically closed}
  field and let $F \supseteq K$ be
  an extension field of $K$. Let $S$ be a $K$-vector
  subspace of $F$ of finite dimension and of transcendence degree $d$.
  Then
  $$\dim S^2\geq (d+1)\dim S -d(d+1)/2.$$
\end{thm}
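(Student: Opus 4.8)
The plan is to deduce the statement from Freiman's Lemma (Theorem~\ref{thm:freiman}) by exhibiting a single valuation of $F$ whose value group ``records'' the transcendence degree of $S$. First I would normalise: since $\dim S$, $\dim S^2$ and the transcendence degree of $S$ are all unchanged when $S$ is replaced by $s^{-1}S$ for a nonzero $s\in S$, I may assume $1\in S$, and then set $F'=K(S)$, a finitely generated extension of $K$ of transcendence degree $d$. The goal is to produce a valuation $w$ on $F'$ with residue field $K$ such that the finite set $w(S)$, regarded inside the real vector space $\Gamma_w\otimes_{\Z}\R$ ($\Gamma_w$ being the value group of $w$), affinely spans a subspace of dimension at least $d$.

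To build $w$, I would extract from a basis of $S$ a subset $s_1,\dots,s_d\in S$ forming a transcendence basis of $F'/K$, so that $M:=K(s_1,\dots,s_d)$ is isomorphic to the rational function field $K(X_1,\dots,X_d)$ via $s_i\mapsto X_i$ and $F'/M$ is finite. On $M$ I would use the monomial valuation $w_0$ at the origin for the lexicographic order: for a polynomial $P=\sum_{\alpha}c_\alpha X^\alpha$ set $w_0(P)$ to be the lexicographically least $\alpha$ with $c_\alpha\neq 0$, extended multiplicatively to $M$. One checks directly that $w_0$ is a valuation with value group $(\Z^d,\text{lex})$, that $w_0(s_i)$ is the $i$-th coordinate vector, and that its residue field is $K$ (here algebraic closedness is used: leading coefficients lie in $K$). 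I would then extend $w_0$ to a valuation $w$ on the finite extension $F'$ (extensions of valuations always exist); its residue field, being a finite extension of $K$, equals $K$, and $w|_M=\sigma\circ w_0$ for an order-preserving embedding $\sigma$ of value groups. As $\sigma$ is an injective group homomorphism, $w(s_1),\dots,w(s_d)$ are $\R$-linearly independent in $\Gamma_w\otimes\R$, so together with $w(1)=0$ they give $d+1$ affinely independent elements of $w(S)$. (In fact $\dim_{\Q}(\Gamma_w\otimes\Q)=d$ by Abhyankar's inequality, but only the lower bound is needed.)

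Finally I would assemble the pieces. Since $w$ has residue field $K$, Proposition~\ref{prop:filtered_basis} applies to $S$ and to $S^2$, giving $|w(S)|=\dim S$ and $|w(S^2)|=\dim S^2$, and Lemma~\ref{lem:v(H^2)} gives $w(S)+w(S)\subseteq w(S^2)$. Write $A=w(S)$ and let $V\subseteq\Gamma_w\otimes\R$ be its affine span; translating, I may take $V$ linear, $V\cong\R^{d'}$ with $d\leq d'\leq |A|-1$. Applying Theorem~\ref{thm:freiman} to $A\subseteq V$ gives $|A+A|\geq (d'+1)|A|-d'(d'+1)/2$, and since $x\mapsto (x+1)|A|-x(x+1)/2$ is nondecreasing on $[0,|A|-1]$ this is at least $(d+1)|A|-d(d+1)/2$; hence
\[
\dim S^2=|w(S^2)|\geq |w(S)+w(S)|=|A+A|\geq (d+1)\dim S-\tfrac{d(d+1)}{2}.
\]

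I expect the construction of $w$ in the second step to be the main obstacle: one needs a valuation that is at once ``small'' (residue field $K$, so that Proposition~\ref{prop:filtered_basis} is available and $|w(S)|=\dim S$) and ``large'' (value group of rational rank $d$, so that $w(S)$ genuinely spans $d$ dimensions). The delicate points are that one must not leave $F'=K(S)$ --- extending into $F$ risks enlarging the residue field --- that the monomial valuation on a rational function field is exactly the right source of a residue field $K$ together with the prescribed values on the coordinate functions, and that an extension of a valuation to a finite extension restricts back to it up to an order embedding, which is what forces the affine independence of the points $w(s_i)$. Once $w$ is in place, the rest is a direct combination of Proposition~\ref{prop:filtered_basis}, Lemma~\ref{lem:v(H^2)} and Freiman's Lemma.
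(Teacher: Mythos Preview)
Your argument is correct and matches the paper's approach: reduce to $1\in S$, build a valuation on $K(S)$ with residue field $K$ whose values on a transcendence basis drawn from $S$ are $\Z$-independent (the paper isolates exactly this construction as Lemma~\ref{lem:val}, citing Bourbaki for the monomial valuation and its extension to the algebraic extension), and then feed $w(S)$ into Freiman's Lemma via Proposition~\ref{prop:filtered_basis} and Lemma~\ref{lem:v(H^2)}. One cosmetic slip: algebraic closedness of $K$ is not what makes the residue field of the monomial valuation on $M=K(X_1,\dots,X_d)$ equal to $K$ --- that holds for any $K$ --- it is needed only to force the residue field to stay $K$ after extending $w_0$ to the finite extension $F'$, which you do invoke correctly a line later.
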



The proof of Theorem~\ref{thm:freiman_field} rests upon the following lemma.

\begin{lemma}\label{lem:val}
  If $F/K$ is a field extension over an algebraically closed field
  $K$, and if $x_1,\ldots ,x_d$ are $K$-algebraically independent
  elements of $F$ such that $F$ is an algebraic extension of
  $K(x_1,\ldots,x_d)$, then there exists a valuation $v$ of~$F$, such
  that the associated valuation ring has residue field isomorphic to
  $K$ and such that the valuation values $v(x_1),v(x_2),\ldots,v(x_d)$
  generate a group isomorphic to~$\Z^d$.
\end{lemma}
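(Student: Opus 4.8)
The plan is to build the valuation $v$ on $F$ in two stages: first construct the "right" valuation on the rational subfield $K(x_1,\ldots,x_d)$, and then extend it to the finite extension $F$ while controlling the residue field. For the first stage, I would use the standard \emph{monomial (weight) valuation} on $K(x_1,\ldots,x_d)$. Pick positive integers $w_1,\ldots,w_d$ that are $\Z$-linearly independent as a tuple in a suitable sense — concretely, choose $w_i$ so that the map $(a_1,\ldots,a_d)\mapsto \sum a_i w_i$ from $\Z^d$ into $\Z$ is injective on the relevant finite box of exponents, or, cleanest of all, take values in $\Z^d$ with the lexicographic order so injectivity is automatic. Declaring $v(x_i)=w_i$ (respectively $v(x_i)=e_i$, the $i$-th standard basis vector of $\Z^d$ ordered lexicographically) and $v(c)=0$ for $c\in K^\times$ determines $v$ on monomials, hence on polynomials by the ultrametric rule $v(\sum c_\alpha x^\alpha)=\min_\alpha v(x^\alpha)$ (the minimum is attained by a \emph{unique} term by our injectivity choice, so this is genuinely a valuation, not merely a subadditive function), and then on $K(x_1,\ldots,x_d)$ by $v(f/g)=v(f)-v(g)$. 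One checks in the usual way that the residue field of this valuation is exactly $K$: an element of valuation $0$ is a ratio $f/g$ of polynomials whose leading (minimal-weight) monomials have equal weight, and reducing modulo $\m$ sends it to the ratio of the corresponding leading coefficients, which lies in $K$. By construction the value group of $v$ on $K(x_1,\ldots,x_d)$ is the subgroup of $(\Z^d,\mathrm{lex})$ generated by $e_1,\ldots,e_d$, i.e.\ all of $\Z^d$.

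The second stage is to extend $v$ to the finite extension $F/K(x_1,\ldots,x_d)$. By the general theory of extensions of valuations (see \cite[Ch.~6]{Bourbaki_AlgComm5-6}), at least one valuation $w$ of $F$ restricts to $v$ on $K(x_1,\ldots,x_d)$; fix such a $w$. Its value group contains $\Z^d$ with finite index $e$ (the ramification index), and its residue field is a finite extension of $K$ of degree $f$ (the residue degree), with $ef\le [F:K(x_1,\ldots,x_d)]$. Since $K$ is algebraically closed, the residue field of $w$, being a finite extension of $K$, must equal $K$: so the residue-field condition is automatic once $K$ is algebraically closed. That is exactly where the hypothesis is used. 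As for the value group: $w(x_i)=v(x_i)=e_i$, so $w(x_1),\ldots,w(x_d)$ generate a subgroup of the value group of $w$ isomorphic to $\Z^d$ — and that is precisely the conclusion asked for (the lemma does not require the full value group of $w$ to be $\Z^d$, only that the specified elements generate a copy of $\Z^d$). So in fact no further work on ramification is needed.

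The only genuinely delicate point is the first stage: verifying that the weight function really is a valuation, which hinges on the combinatorial choice making the minimal-weight monomial unique in every polynomial, and on the clean computation of the residue field. Using $(\Z^d,\mathrm{lex})$ rather than collapsing to $\Z$ via weights sidesteps any arithmetic subtlety about ``generic enough'' weights, at the mild cost of working with an ordered group that is not archimedean; everything in Section~\ref{sec:valuations} was stated for valuations into an arbitrary ordered group, so this is harmless. If one prefers a $\Z$-valued valuation, the alternative is to invoke that a sufficiently generic integer weight vector $(w_1,\ldots,w_d)$ separates the finitely many exponent vectors that can arise, but since $F$ — and hence any particular polynomial — is not fixed in advance, the lexicographic construction is the safer route and I would adopt it. I expect the extension step and the residue-field identification to be routine given the algebraic closedness of $K$; the write-up effort is concentrated in the monomial-valuation verification.
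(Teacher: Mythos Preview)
Your proposal is correct and follows essentially the same two-stage approach as the paper: construct the standard $\Z^d$-valued monomial valuation on $K(x_1,\ldots,x_d)$ with residue field $K$, then extend it to $F$ and use the algebraic closedness of $K$ to force the residue field of the extension back down to $K$. The paper simply cites Bourbaki for both steps rather than spelling out the monomial-valuation construction; one small imprecision on your side is that the lemma only assumes $F$ is \emph{algebraic} (not necessarily finite) over $K(x_1,\ldots,x_d)$, so the $ef\le [F:K(x_1,\ldots,x_d)]$ inequality is not available as stated, but your argument survives unchanged since the residue-field extension is in any case algebraic over $K$ and hence trivial.
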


\begin{proof}
  It is standard to construct a valuation $v$ from $K(x_1,x_2,\ldots
  ,x_d)$ to $\Z^d$ such that $v(x_1)=(1,0,\ldots,0),
  v(x_2)=(0,1,\ldots,0),\ldots, v(x_d)= (0,\ldots,0,1)$ and with
  associated residue field isomorphic to $K$ (see
  e.g. \cite[Ch. 6, \S 3.4, Example~6]{Bourbaki_AlgComm5-6}).
  This valuation can then be extended to the whole of $F$
  \cite[Ch. 6, \S 3,3, Proposition~5]{Bourbaki_AlgComm5-6} with its
  residue field necessarily becoming an algebraic extension of the original
  residue field associated to $v$
  \cite[Ch. 6, \S 8.1, Proposition 1]{Bourbaki_AlgComm5-6}. Since~$K$
  is algebraically closed, the
  residue field associated to the extended valuation must 
  therefore also be isomorphic to $K$.
\end{proof}

\begin{proof}[Proof of Theorem~\ref{thm:freiman_field}]
Without loss of
generality we may suppose $K\subset S$ and $F=K(S)$. 

Let $x_1,\ldots ,x_d$
be $d$ algebraically independent elements of $F$.
Choose for $v$ a valuation given by
Lemma~\ref{lem:val}.  Then, since the residue field associated to $v$
is $K$, by Proposition~\ref{prop:filtered_basis} we know that
$\dim S=|v(S)|$ and $\dim S^2=|v(S^2)|$.
From~(\ref{eq:valuation_product}) we have $\dim S^2 \geq |v(S)+v(S)|$
and Theorem~\ref{thm:freiman} now gives us
$$|v(S)+v(S)|\geq (d+1)|v(S)|-d(d+1)/2 = (d+1)\dim S-d(d+1)/2$$
which proves the theorem.
\end{proof}


\noindent {\bf Consequence.} When one considers a space $S$, $K\subset
S \subset F$,
with $\dim S^2 \leq 3\dim S - 4$, and $F = K(S)$, then $F$ is a
function field in one variable. In particular,
from  \cite[Theorem 1.1.16]{Stichtenoth}, every valuation on $F$
is discrete and its set of  values is $\Z$.

\medskip

For the rest of this article we will assume this setting, namely a
sufficiently small combinatorial genus $\gamma$, so that the
transcendence degree of $S$ can only be equal to $1$. 
The term {\em function field} will consequently always 
mean from now on {\em function field in one variable}.
Since the multiplicative properties of $S$ that we wish to study are
invariant by multiplication by a constant non-zero element, it will
also be convenient to systematically assume $1\in S$, so that $K(S)$
is a function field (in one variable). 

\section{Products of spaces, the lattice of subspaces and
  characterising spaces with combinatorial genus $\gamma=0$}
\label{sec:products}

In order to study the structure of a
product set $S^2$, the lattice of subspaces that we introduce below
will be particularly useful.  Its structure will enable us to almost
immediately characterise spaces with the smallest possible
combinatorial genus.

\subsection{The lattice of subspaces}\label{sec:lattice}

Let $(e_1,\ldots ,e_n)$ be a filtered basis of the space $S$
relative to a valuation $v$.
Consider the sequence of
subspaces {introduced in
Proposition~\ref{prop:filtered_basis}}
$$S_1\subset S_2\subset\cdots \subset S_n=S$$
where $S_i$ denotes the subspace of $S$ generated by $e_1,\ldots
,e_i$. We will refer to this sequence of spaces as the {\em
  filtration of $S$ relative to $v$}. 

Since dimensions of spaces are unchanged by multiplication by a
constant element, we may assume $e_1=1$ and $S_1=K$.
It will be useful to consider the lattice of subspaces of $S^2$
consisting of the products of subspaces $S_iS_j$ and ordered by
inclusion, as represented on Figure~\ref{fig:subspaces}. 
We will consider directed edges between $S_iS_j$ and $S_iS_{j+1}$ and
between $S_iS_j$ and $S_{i+1}S_{j}$, and label both edges by a weight
defined as the
codimension of $S_iS_j$ inside $S_iS_{j+1}$ and $S_{i+1}S_{j}$
respectively.
We will make several times the argument that the sum of weights on two
directed paths that lead from the same initial vertex to the same
terminal vertex must be the same because they both equal the
codimension of the initial subspace inside the terminal subspace.
Notice also that all weights must be positive because the valuation
set of an initial subspace must be strictly smaller than the valuation
set of the corresponding terminal subspace: indeed, $e_ie_{j+1}$ is an
element of minimal valuation of $S_iS_{j+1}$ that cannot belong to
$S_iS_j$ because the minimum of $v(S_iS_j)$ is attained by $e_ie_j$,
and similarly for $S_{i+1}S_j$.

\begin{figure}[!h]
  \centering
\begin{tikzpicture}[scale=0.8]
\node[name=S11] at (4,4) {$K=S_1^2$};
\node[name=S12] at (6,4) {$S_{1}S_{2}$};
\node[name=S13] at (8,4) {$S_1S_3$};
\node[name=S14] at (10,4) {$S_1S_4$};
\node[name=S15] at (12,4) {$S_1S_5$};
\node[name=S22] at (6,6) {$S_2^2$};
\node[name=S23] at (8,6) {$S_2S_3$};
\node[name=S24] at (10,6) {$S_2S_4$};
\node[name=S25] at (12,6) {$S_2S_5$};
\node[name=S33] at (8,8) {$S_3^2$};
\node[name=S34] at (10,8) {$S_3S_4$};
\node[name=S35] at (12,8) {$S_3S_5$};
\node[name=S44] at (10,10) {$S_4^2$};
\node[name=S45] at (12,10) {$S_4S_5$};
\node[name=S55] at (12,12) {$S_5^2$};
\draw[->] (S11) -- (S12) edge (S13) (S13) edge (S14) (S14) edge (S15) (S15)
edge (S25) (S25) edge (S35) (S35) edge (S45) (S45) edge (S55);
\draw[->] (S22) -- (S23) edge (S24) (S24) edge (S25);
\draw[->] (S33)--(S34) edge (S35);
\draw[->] (S14) to (S24) edge (S34) (S34) edge (S44) (S44) edge (S45);
\draw[->] (S12) edge (S22);
\draw[->] (S13) -- (S23) edge (S33);
\end{tikzpicture}
  \caption{the lattice of subspaces}
  \label{fig:subspaces}
\end{figure}

The following lemma states that when two edges that fall into the same
terminal vertex both have weight $1$, then the initial vertices
correspond to the same subspace.

\begin{lemma}\label{lem:codim1}
  Suppose the spaces $S_iS_{j+1}$ and $S_{i+1}S_j$ both have codimension $1$
  inside $S_{i+1}S_{j+1}$, then $S_iS_{j+1} = S_{i+1}S_j$.
\end{lemma}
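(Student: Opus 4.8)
The plan is to combine the obvious ``almost spanning'' description of $S_{i+1}S_{j+1}$ in terms of $S_iS_{j+1}$ and $S_{i+1}S_j$ with a single valuation estimate. I would fix a filtered basis $(e_1,\dots,e_n)$ of $S$ relative to $v$ with $e_1=1$ and $v(e_1)>v(e_2)>\cdots>v(e_n)$. Recall that $S_aS_b$ is spanned by the products $e_ke_\ell$ with $k\leq a$ and $\ell\leq b$, and that, as already noted in the discussion preceding the lemma, the minimum of $v(S_aS_b)$ equals $v(e_a)+v(e_b)$ and is attained by $e_ae_b$. Among the spanning products $e_ke_\ell$ of $S_{i+1}S_{j+1}$, every one with $k\leq i$ lies in $S_iS_{j+1}$ and every one with $\ell\leq j$ lies in $S_{i+1}S_j$; the sole exception is $e_{i+1}e_{j+1}$. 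Hence
$$S_{i+1}S_{j+1}=S_iS_{j+1}+S_{i+1}S_j+\langle e_{i+1}e_{j+1}\rangle.$$

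The key step is to show that $W:=S_iS_{j+1}+S_{i+1}S_j$ is a \emph{proper} subspace of $S_{i+1}S_{j+1}$, i.e.\ that $e_{i+1}e_{j+1}\notin W$. By the ultrametric inequality, every nonzero $w\in W$ satisfies
$$v(w)\ \geq\ \min\bigl(\min v(S_iS_{j+1}),\,\min v(S_{i+1}S_j)\bigr)\ =\ \min\bigl(v(e_i)+v(e_{j+1}),\ v(e_{i+1})+v(e_j)\bigr),$$
and both terms of the last minimum are strictly greater than $v(e_{i+1})+v(e_{j+1})=v(e_{i+1}e_{j+1})$, since $v(e_i)>v(e_{i+1})$ and $v(e_j)>v(e_{j+1})$. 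Therefore $e_{i+1}e_{j+1}\notin W$, so $W\subsetneq S_{i+1}S_{j+1}$.

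Finally I would conclude by a dimension count. We have $S_iS_{j+1}\subseteq W\subsetneq S_{i+1}S_{j+1}$; since by hypothesis $S_iS_{j+1}$ has codimension exactly $1$ in $S_{i+1}S_{j+1}$, this forces $W=S_iS_{j+1}$, hence $S_{i+1}S_j\subseteq S_iS_{j+1}$. As both spaces have codimension $1$ in $S_{i+1}S_{j+1}$ they have equal dimension, so the inclusion is an equality: $S_iS_{j+1}=S_{i+1}S_j$. There is no genuine obstacle here — the whole argument reduces to the one-line valuation estimate above, which is immediate from Proposition~\ref{prop:filtered_basis} and the defining properties of valuations; the only point needing a little care is the identification of $\min v(S_iS_{j+1})$ and $\min v(S_{i+1}S_j)$, which holds because $e_i$ (resp.\ $e_{j+1}$) has smallest valuation among $e_1,\dots,e_i$ (resp.\ $e_1,\dots,e_{j+1}$).
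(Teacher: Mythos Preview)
Your proof is correct and rests on the same valuation idea as the paper's: both $S_iS_{j+1}$ and $S_{i+1}S_j$ lie in the proper subspace of $S_{i+1}S_{j+1}$ consisting of elements whose valuation exceeds $v(e_{i+1}e_{j+1})$. The paper's version is a touch more direct---it simply defines $U=\{s\in S_{i+1}S_{j+1}: v(s)>\min v(S_{i+1}S_{j+1})\}$, notes that both spaces sit inside $U$, and concludes, bypassing your decomposition $S_{i+1}S_{j+1}=S_iS_{j+1}+S_{i+1}S_j+\langle e_{i+1}e_{j+1}\rangle$ entirely.
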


\begin{proof}
  Let $U=\{s\in S_{i+1}S_{j+1}, v(s)>\min v(S_{i+1}S_{j+1})\}$. We
  have that $U\subsetneqq S_{i+1}S_{j+1}$ and $U$ is a subspace
  containing both $S_iS_{j+1}$ and $S_{i+1}S_j$ which must therefore
  all have the same dimension and be equal.
\end{proof}

\subsection{The structure of $S$ when $\gamma=0$}
\label{sec:Vosper}

Like in the previous subsection we assume that $e_1=1$, and we moreover set $x=e_2$.

\begin{lemma}\label{lem:vosper}
  Suppose all directed edges lying on any path from $S_1$ to $S_iS_j$,
  $2\leq i$, have weight~$1$. Then for every $k$, $2\leq k\leq j$, the space $S_k$ is generated
  by $1,x,x^2,\ldots ,x^{k-1}$.
\end{lemma}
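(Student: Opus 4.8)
The plan is to induct on $k$, the key tool being Lemma~\ref{lem:codim1} applied to a well-chosen square of the lattice of Figure~\ref{fig:subspaces}. For $k=2$ there is nothing to prove, since $S_2=\langle e_1,e_2\rangle=\langle 1,x\rangle$ by the choice of $e_1,e_2$, so the content is entirely in the inductive step.

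So fix $k$ with $3\leq k\leq j$ and assume $S_{k-1}=\langle 1,x,\ldots,x^{k-2}\rangle$. I would first record the elementary consequence
$$S_2S_{k-1}=\langle 1,x\rangle\cdot\langle 1,x,\ldots,x^{k-2}\rangle=\langle 1,x,x^2,\ldots,x^{k-1}\rangle,$$
(so that, incidentally, $\dim S_2S_{k-1}=k$, using that $x=e_2$ is transcendental over $K$: it cannot be algebraic over the algebraically closed field $K$ since $v(x)<v(1)=0$). It therefore suffices to prove $S_k=S_2S_{k-1}$. For this I look at the vertex $S_2S_k$ of the lattice and the two edges entering it, namely $S_1S_k\to S_2S_k$ and $S_2S_{k-1}\to S_2S_k$. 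Both of these edges lie on a monotone path from $S_1=S_1S_1$ to $S_iS_j$: for the first one may take $S_1S_1\to\cdots\to S_1S_k\to S_2S_k\to\cdots\to S_iS_k\to\cdots\to S_iS_j$, and for the second $S_1S_1\to S_2S_1\to\cdots\to S_2S_{k-1}\to S_2S_k\to\cdots\to S_2S_j\to\cdots\to S_iS_j$; both of these are legitimate paths precisely because $2\leq i$ and $k\leq j$. Hence, by hypothesis, both edges have weight $1$, i.e. $S_1S_k=S_k$ and $S_2S_{k-1}$ both have codimension $1$ inside $S_2S_k$. Lemma~\ref{lem:codim1} then forces $S_k=S_1S_k=S_2S_{k-1}=\langle 1,x,\ldots,x^{k-1}\rangle$, which closes the induction.

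The only genuinely load-bearing point is the verification that the two edges feeding into $S_2S_k$ really do lie on paths from $S_1$ to $S_iS_j$, which is exactly where the constraints $2\leq i$ and $k\leq j$ are used; once this is granted the remainder is pure bookkeeping (translating ``weight $1$'' into ``codimension $1$'' and quoting Lemma~\ref{lem:codim1} and the inductive hypothesis). I do not anticipate any real difficulty beyond the initial observation that $S_2S_k$ is the correct meeting vertex for the two subspaces $S_k$ and $S_2S_{k-1}$.
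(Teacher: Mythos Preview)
Your proof is correct and follows essentially the same approach as the paper: both induct on $k$ and apply Lemma~\ref{lem:codim1} to the pair $S_1S_k$, $S_2S_{k-1}$ inside $S_2S_k$ (the paper's indexing is shifted by one, writing $S_{k+1}=S_2S_k$). Your version is simply more explicit in verifying that the two relevant edges do lie on directed paths from $S_1$ to $S_iS_j$, a point the paper leaves to the reader.
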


\begin{proof}
Proceed by induction on $k$. Suppose the result is
  proved for $k$ and prove it for $k+1\leq j$. By applying
  Lemma~\ref{lem:codim1} to $S_1S_{k+1}$ and $S_2S_k$ inside
  $S_2S_{k+1}$, we obtain $S_{k+1}=\langle 1,x\rangle S_k$ meaning
  that $S_{k+1}$ is generated by $(1,x,x^2,\ldots ,x^{k})$.
\end{proof}

As an immediate corollary we obtain
the following theorem, which is proved in \cite{bszVosper} in more
generality. Its proof illustrates the usefulness of
the subspace lattice described above.

\begin{thm}\label{thm:genus0}
  Let $K$ be an algebraically closed field, and let $S$ be a
  finite-dimensional $K$-vector space lying in a field extension $F$
  of $K$. If $\dim S^2=2\dim S-1$ then $S$ has a basis in geometric
  progression, i.e. of the form $(a,ax,ax^2,\ldots ,ax^{n-1})$.
\end{thm}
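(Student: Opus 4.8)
The plan is to deduce Theorem~\ref{thm:genus0} as a direct corollary of the lattice machinery built in \S\ref{sec:lattice} together with Lemma~\ref{lem:vosper}. First I would reduce to the normalised setting: since multiplying $S$ by a constant changes neither $\dim S$ nor $\dim S^2$, we may assume $1 \in S$; we may also replace $F$ by $K(S)$. By Theorem~\ref{thm:freiman_field} the hypothesis $\dim S^2 = 2\dim S - 1 \le 3\dim S - 4$ (for $\dim S \ge 3$; the cases $\dim S \le 2$ being trivial) forces the transcendence degree of $S$ to be $1$, so $K(S)$ is a function field in one variable and every valuation on it is discrete with value group $\Z$. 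Fix such a valuation $v$ and a filtered basis $(e_1,\ldots,e_n)$ with $e_1 = 1$, and set $x = e_2$, giving the filtration $S_1 \subset \cdots \subset S_n = S$ and the associated subspace lattice.

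Next I would run the weight-counting argument on the lattice. Consider the two extreme monotone paths from $S_1^2 = K$ to $S_n^2 = S^2$: one going along the bottom and right edges, one along the left and top edges (as drawn in Figure~\ref{fig:subspaces}). The total weight on any such path equals $\operatorname{codim}_{S^2} K = \dim S^2 - 1 = 2\dim S - 2 = 2(n-1)$. But each of these two paths has exactly $2(n-1)$ edges, and every edge has weight at least $1$ by the remark preceding Lemma~\ref{lem:codim1} (the element $e_i e_{j+1}$, resp.\ $e_{i+1}e_j$, has strictly smaller valuation than anything in $S_iS_j$). Hence every edge on these paths, and in fact every edge of the lattice lying on \emph{some} monotone path whose total weight is $2(n-1)$, must have weight exactly $1$. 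In particular all edges on every path from $S_1$ to $S_nS_n = S^2$ have weight $1$, so the hypothesis of Lemma~\ref{lem:vosper} is satisfied with $j = n$.

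Then Lemma~\ref{lem:vosper} immediately gives that $S = S_n$ is generated by $1, x, x^2, \ldots, x^{n-1}$, i.e.\ $S$ has a basis in geometric progression; undoing the initial normalisation (multiplying back by the constant $a$ we divided out) yields a basis of the form $(a, ax, ax^2, \ldots, ax^{n-1})$, which is the claim.

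The one genuinely delicate point is the bookkeeping in the middle step: one must be careful that the weight-$2(n-1)$ bound together with the edge-count $2(n-1)$ really does pin down \emph{every} edge relevant to Lemma~\ref{lem:vosper} to weight $1$, not merely the edges on the two boundary paths. This is handled by observing that for any edge $e$ of the lattice there is a monotone path from $S_1^2$ to $S_n^2$ through $e$, and its total weight is the fixed number $2(n-1) = $ (number of edges on it), forcing each of its weights — hence the weight of $e$ — to equal $1$; combined with the positivity of all weights this is airtight. Everything else is a routine application of the already-established lemmas.
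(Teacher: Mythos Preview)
Your proposal is correct and follows essentially the same route as the paper's proof: normalise so that $e_1=1$, observe that any monotone path in the subspace lattice from $S_1^2=K$ to $S_n^2=S^2$ has exactly $2(n-1)$ edges and total weight $\dim S^2 - 1 = 2(n-1)$, forcing every edge weight to equal $1$, and then invoke Lemma~\ref{lem:vosper}. The only differences are cosmetic: you spell out the reduction to transcendence degree~$1$ via Theorem~\ref{thm:freiman_field} and handle the trivial cases $\dim S\le 2$ explicitly, whereas the paper leaves this implicit in the standing assumptions set up at the end of \S\ref{sec:freiman}.
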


\begin{proof} Replacing $S$ by $e_1^{-1}S$ reduces to the case
  $e_1=1$. Because the
  codimension of $K$ in $S^2$ is $2\dim S-2$ and is equal to the
  length of any path from $K$ to $S^2$ in the lattice, we have that
  every edge must be of weight $1$. The result therefore follows from
  Lemma~\ref{lem:vosper}.
\end{proof}

We now turn to Riemann-Roch spaces that provide more spaces of low
combinatorial genus~$\gamma$. 


\section{Divisors, Riemann-Roch spaces, and characterising spaces with
combinatorial genus $\gamma=1$}
\label{sec:divisors}
\subsection{Divisors and Riemann-Roch spaces on function fields}
\label{subsec:divisors}
We quickly recall some
basic notions on the theory of the function fields in one variable (or
equivalently of algebraic curves).  For further details, we refer the
reader to \cite{Stichtenoth} or to \cite{Fulton} for a more geometric
point of view.
{In the present subsection, 
in order to introduce some
  notions that will also be useful in the more general setting of
  Section~\ref{sec:perfect}, we do not assume that $K$ is algebraically closed.}

Let $F$ be a function
field over $K$ {such that $K$ is algebraically closed
  in $F$}. 
{
Following \cite[Chapter I]{Stichtenoth}, let us call a {\em place} $P$
of
$F$ the maximal ideal of a valuation ring. Valuations, valuation
rings, places are interchangeable notions in the sense that any one
unambiguously defines the others. Informally, a place captures the concept of ``point'' of the associated
algebraic curve. For a place $P$ we denote by $v_P$ the (unique)
associated discrete valuation. The {\em degree} $\deg P$ of a place $P$ is the
dimension over $K$ of its residue field $\OO/P$. It is always finite
and equal to $1$ when $K$ is algebraically closed.
}

A {\em divisor on} $F$ is
an element of the free abelian group generated by the places of
$F$. Thus it is a formal $\Z$--linear combination of places. Given a
function $f \in F^{\times}$, the divisor of $f$ is defined as
$$
(f) := \sum_{P {\rm \ place\ of\ } F} v_P(f)P.
$$
The
group of divisors is partially ordered as follows: given a divisor
$G = g_1 P_1 + \cdots + g_m P_m$, we have $G \geq 0$ if
$g_1, \dots, g_m \geq 0$. Next $G \geq H$ if $G-H \geq 0$.
{The {\em degree} of the divisor $G$ is defined as
$$\deg G = g_1\deg P_1 + \cdots +g_m\deg P_m.$$
}

Given a divisor $D$ of $F$, the Riemann-Roch space $L(D)$ is defined as
$$
L(D) := \{ f\in F ~|~ (f) + D \geq 0\} \cup \{0\}.
$$
The dimension of this space is given by the famous Riemann-Roch
theorem \cite[Theorem 1.5.15]{Stichtenoth}. In particular it satisfies:
\begin{equation*}
\text{If } \deg D>2g-2 \text{ then }\dim L(D) = \deg(D)+1-g
\end{equation*}
where $g$ denotes the {\em genus} of the field $F$
{(see \cite[Definition 1.4.15]{Stichtenoth} for a
  definition)}.

Two divisors $D$, $D'$ are said to be {\em linearly equivalent} which
we denote by $D \sim D'$ if $D' = D+(f)$ for some function $f \in F^\times$.
Such an equivalence induces an isomorphism between the Riemann-Roch spaces
which is explicit:
$$
\map{L(D')}{L(D)}{s}{fs.}
$$
In short $L(D) = f \cdot L(D')$.

The following well-known result due to Mumford gives an explicit
formula for the product of Riemann-Roch spaces.

\begin{thm}[{\cite[Theorem~6]{Mumford}}]\label{thm:mumford}
  Let $D$, $D'$ be two divisors of a function field $F$ of genus $g$
  {over an algebraically closed field $K$}.
  Suppose that $\deg D \geq 2g$ and $\deg D' \geq 2g+1$.
  Then
  $$
  L(D)L(D') = L(D+D').
  $$
\end{thm}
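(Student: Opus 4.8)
The plan is to prove the non-trivial inclusion $L(D+D') \subseteq L(D)L(D')$; the reverse inclusion $L(D)L(D') \subseteq L(D+D')$ is immediate since for $f \in L(D)$ and $g \in L(D')$ one has $(fg) + D + D' = ((f)+D) + ((g)+D') \geq 0$. For the hard direction, the standard approach is to fix a function $h \in L(D+D')$ and show it lies in the span of products $fg$ with $f \in L(D)$, $g \in L(D')$. The key tool is a dimension count via Riemann--Roch together with a ``moving the support'' argument: since $K$ is algebraically closed, $F$ has infinitely many places of degree $1$, so we may choose places avoiding the supports of $D$, $D'$ and of $(h)+D+D'$ as needed.

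The main steps I would carry out are as follows. First, reduce to showing that for every effective divisor $E$ of some controlled degree (to be chosen so that all the relevant Riemann--Roch spaces have their ``generic'' dimension $\deg(\cdot)+1-g$), the space $L(D)L(D')$ contains enough functions. The cleanest route is: fix $h \in L(D+D')$, $h \neq 0$. Consider the divisor $D_1 := D + (h_1)$ for a suitable $h_1$; more precisely, pick a function $f \in L(D)$ whose divisor of zeros $(f)_0$ and the divisor $D$ are chosen so that $D - (f)_0 + (f)_\infty$-type manipulations let us write $h = f \cdot (h/f)$ with $h/f \in L(D')$. To make this work one needs $(h/f) + D' \geq 0$, i.e. $(h) - (f) + D + D' \geq (h) + D - (f)$, so it suffices that $(f) \leq (h) + D$, i.e. $f \in L((h)+D)$. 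Since $(h)+D$ and $D$ have the same degree $\deq \deg D \geq 2g$, and since $\deg((h)+D) \geq 2g > 2g-2$, Riemann--Roch gives $\dim L((h)+D) = \deg D + 1 - g = \dim L(D)$; but we need an $f$ in $L(D)$ (not merely in $L((h)+D)$) with $(f) \leq (h)+D$, which is not automatic. The actual argument replaces this with: write $D' = A + B$ with $A$ effective of degree $g$ and $\deg B \geq g+1$, chosen generically, and induct, or — more efficiently — use the following. Pick a nonzero $u \in L(D' - P_0)$ impossible in general; instead the clean proof picks, for a generic place $P$, elements showing $\dim(L(D)L(D')) \geq \deg(D+D')+1-g = \dim L(D+D')$, which combined with the trivial inclusion forces equality. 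So the real content is a lower bound on $\dim L(D)L(D')$.

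Concretely, for the lower bound I would argue: choose a basis adapted to a long filtration via a single place $Q$ of degree $1$ not in the support of $D$ or $D'$, so that $L(D) \supseteq L(D - kQ)$ for $k = 0, 1, \dots, \deg D$ and similarly for $D'$, giving functions with prescribed distinct pole orders at $Q$. Multiplying a function with pole order $i$ at $Q$ (in $L(D)$) by one with pole order $j$ at $Q$ (in $L(D')$) gives an element of $L(D)L(D')$ with pole order $i+j$ at $Q$; ranging over $i \in \{0,\dots,\deg D\}$ and $j \in \{0,\dots,\deg D'\}$ — using that $\dim L(D - iQ) > \dim L(D-(i+1)Q)$ holds for all these $i$ because $\deg(D - iQ) \geq \deg D - \deg D' \geq$ wait, one must be careful: the gaps. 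Here the hypotheses $\deg D \geq 2g$ and $\deg D' \geq 2g+1$ enter: they guarantee that $L(D - iQ)$ and $L(D' - jQ)$ have no ``unexpected'' gaps in the range needed, so that one produces functions of every pole order from $0$ up to $\deg D + \deg D' = \deg(D+D')$ at $Q$ inside $L(D)L(D')$, yielding $\dim L(D)L(D') \geq \deg(D+D') + 1 - g$. The main obstacle — and the step requiring the degree hypotheses in an essential way — is precisely controlling these gap sequences: ensuring that the Weierstrass gaps of $Q$ do not obstruct the construction, which is where one invokes that the divisors have degree above $2g-1$ (resp. $2g$) and uses the Riemann--Roch dimension formula rather than merely the inequality. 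I would cite Mumford's original argument for the delicate bookkeeping and present the filtration/pole-order argument as the conceptual skeleton.
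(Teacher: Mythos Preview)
The paper does not prove Theorem~\ref{thm:mumford}; it is stated with a citation to Mumford and used as a black box, so there is no in-paper argument to compare against.

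Your sketch is not a proof, and the valuation approach you outline has both a concrete error and a gap you cannot close with the tools you invoke. First, having chosen $Q$ outside the support of $D$ and $D'$, every nonzero $f\in L(D)$ satisfies $v_Q(f)\geq 0$: the chain $L(D)\supset L(D-Q)\supset L(D-2Q)\supset\cdots$ records orders of \emph{vanishing} at $Q$, not pole orders. Second, even after this correction, the sumset count falls short. With $A=v_Q(L(D))$ and $B=v_Q(L(D'))$ one has $|A|=\deg D+1-g$ and $|B|=\deg D'+1-g$, and the inclusion $A+B\subseteq v_Q\bigl(L(D)L(D')\bigr)$ together with the trivial bound $|A+B|\geq |A|+|B|-1$ gives only
\[
\dim L(D)L(D')\;\geq\;\deg(D+D')+1-2g,
\]
which is $g$ short of $\dim L(D+D')=\deg(D+D')+1-g$. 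Bridging those missing $g$ dimensions requires precise control of the gap values in $A$ and $B$, and you defer this to ``Mumford's original argument for the delicate bookkeeping''. But Mumford's proof is not a valuation or sumset computation: it is cohomological, built on the base-point-free pencil trick (a two-dimensional base-point-free subsystem exists because the degrees are at least $2g$) together with an induction on the degree and Riemann--Roch. Your deferral is thus to an argument of an entirely different nature, and the filtration route as written does not reach the conclusion.
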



In particular, combining the Riemann-Roch theorem with Theorem~\ref{thm:mumford}, one
has that in a function field of genus $g$, for any divisor $D$ of degree 
larger than $2g+1$, the space $L(D)$ has combinatorial genus
$\gamma=g$. {This has in particular motivated Definition~\ref{def:gamma}.}

Finally recall that, {over an algebraically closed field $K$}, a function field $F$ has genus~$0$ if and only if
it is a purely transcendental extension $F=K(x)$ of $K$. In such an
extension, the space generated by the functions $1,x,x^2,\ldots ,x^n$ is
equal to the Riemann-Roch space $L(nP_\infty)$ where $P_\infty$ is the
place at infinity.
We remark that the statement of Theorem~\ref{thm:genus0} is
equivalent to saying that a space $S$ has combinatorial genus $\gamma=0$ if
and only if it has genus $0$ and is equal to a Riemann-Roch space
$L(D)=aL(nP_\infty)$ for a divisor $D=nP_\infty-(a)$ and a function~$a$.

\subsection{Statement of the main theorem}
\label{sec:Vosper+1}
 { Until the end of Section~\ref{sec:genus0}, the base field
 $K$ is supposed to be algebraically closed.}

 Our main purpose is to
classify spaces over $K$ with combinatorial genus $1$.  That is to say, given a
finitely generated field $F$ over $K$ we want to understand the structure
of $K$--spaces $S \subseteq F$ such that $\dim S^2 = 2 \dim S$.
As remarked at the end of Section~\ref{sec:freiman}, we know from
Theorem~\ref{thm:freiman_field} that the
transcendence degree of $S$ must be equal to $1$, so that, assuming without
loss of generality $1\in S$, $F$ is a function field (in one
variable). 

With the case $\gamma=1$ in mind, we recall for future reference
its additive analogue:
\begin{prop}\label{prop:A+A=2A}
  A subset $A$ of the integers, $|A|\geq 4$, is such that $|A+A|=2|A|$ if and only
  if it is of the form $A=a+\{0,2d,3d,\ldots ,(n-1)d,nd\}$ for some
  {integer $a$ and some}
  non-zero $d$: in other words, writing $A$ as an increasing sequence, 
  it is an arithmetic
  progression with a missing element after the first position
  (positive $d$) or before the last position (negative $d$).
\end{prop}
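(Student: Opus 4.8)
The plan is to treat the two implications of the equivalence separately. The ``if'' part is a one-line computation, while the ``only if'' part is handled by normalising $A$ and then a finite case analysis; the single substantial ingredient is a structural statement that can either be quoted from Freiman's $3k-4$ theorem or proved directly.

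For the ``if'' direction, let $A = a + d\{0,2,3,\ldots,n\}$ with $d\neq 0$. Translation by $a$ and dilation by $d$ change neither $|A|$ nor $|A+A|$, so it suffices to note that $B=\{0,2,3,\ldots,n\}$ has $|B|=n$ and $B+B=\{0\}\cup\{2,3,\ldots,2n\}$: the sum $0+2$ gives $2$, and $\{2,\ldots,n\}+\{2,\ldots,n\}=\{4,\ldots,2n\}$ supplies everything from $4$ on (here $n\geq 3$), so $|B+B|=2n$.

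For the ``only if'' direction, first normalise: translating by $-\min A$ and dividing by $\gcd(A)$ we may assume $0=\min A$, $\gcd(A)=1$, and put $\ell=\max A$; neither step alters $|A|$ or $|A+A|$. The set $A+A$ always contains the ``staircase'' $A\cup(\ell+A)$, whose two halves meet only at $\ell$, giving $2n-1$ guaranteed elements; since $|A+A|=2n$, exactly one further element $z$ lies in $(A+A)\setminus(A\cup(\ell+A))$. Because $A+A\subseteq[0,2\ell]$ and $A\cup(\ell+A)$ is precisely $[0,2\ell]$ with the gaps of $A$ and their $\ell$-translates removed, $z$ is either a gap of $A$ in $(0,\ell)$ or lies in $\ell+(\text{gaps of }A)$; as the reflection $x\mapsto\ell-x$ preserves all relevant data, we may assume $z$ is a gap of $A$. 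From $2n=|A+A|\leq 2\ell+1$ we also get $n\leq\ell$. The crux is the reverse inequality $\ell\leq n$, equivalently that $A$ is a block of $n+1$ consecutive integers minus one point. This is exactly what Freiman's $3k-4$ theorem (\cite{Freiman}, \cite[Th.~5.11]{TaoVu}) yields when $\gamma=1\leq n-3$ — it puts $A$ in an arithmetic progression of length $n+1$, necessarily of common difference $1$ since $\gcd(A)=1$ — and it can also be proved by hand: if $A$ had at least two gaps, one checks that its smallest gap $g_1$ belongs to $A+A$ unless $g_1=1$ and (after applying the reflection to $A$) that its largest gap likewise forces an element of $\ell+(\text{gaps of }A)$ into $A+A$ unless that gap equals $\ell-1$, and that meeting both constraints while still having $|A+A|=2n$ is impossible as soon as a second gap is present.

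Granting $\ell=n$, we have $A=\{0,1,\ldots,n\}\setminus\{j\}$ for some $0\leq j\leq n$. If $j\in\{0,n\}$ then $A$ is an interval with $|A+A|=2n-1\neq 2n$, so $1\leq j\leq n-1$. For such $j$ one computes that $A+A$ equals $\{0,1,\ldots,2n\}$ with $1$ and/or $2n-1$ possibly removed: every $m$ with $2\leq m\leq 2n-2$ has at least three representations $m=x+y$ with $x,y\in\{0,\ldots,n\}$, of which at most two involve the forbidden value $j$, so $m\in A+A$; whereas $1\notin A+A$ exactly when $j=1$ and $2n-1\notin A+A$ exactly when $j=n-1$. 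Hence $|A+A|=2n$ forces precisely one of $j=1$, $j=n-1$ to hold (these differ as $n\geq 4$), giving $A=\{0,2,3,\ldots,n\}$ or its reflection $A=\{0,1,\ldots,n-2,n\}$; undoing the normalisation produces the asserted form with $d>0$ in the first case and $d<0$ in the second. The one genuinely nontrivial step is $\ell\leq n$; everything else is bookkeeping.
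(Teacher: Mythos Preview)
Your proof is correct and follows the same approach as the paper: invoke Freiman's $3k-4$ theorem to place $A$ inside an arithmetic progression of length $|A|+1$, then check which position the missing element can occupy. The paper dispatches the latter step with the phrase ``easily seen to be necessarily at an extreme end of the progression''; you carry out that verification in full, and also supply the ``if'' direction, which the paper leaves implicit.
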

Indeed, Freiman's $3k-4$ Theorem applied to the case $|A+A|=2|A|$
gives that $A$ is an arithmetic progression with a single missing
element, which is then easily seen to be necessarily at an extreme end
of the progression. 
{
Proposition~\ref{prop:A+A=2A} is also true in the
integers modulo a prime $p$, provided $|A+A|\leq p-2$, see \cite{hr00}
(and is therefore necessarily true in $\Z$).
}

To generate a space $S$ such that $\gamma=1$, Proposition~\ref{prop:A+A=2A}
suggests naturally to take a basis of the form $x^a, a\in A$, where
$A$ is such that $|A+A|=2|A|$. Such a space is a subspace of
codimension $1$ inside a space with a basis in geometric progression,
i.e. inside a Riemann-Roch space of genus $0$.

Alternatively, Theorem~\ref{thm:mumford} tells
us that Riemann-Roch spaces of genus $1$ will also give us
spaces with combinatorial genus equal to
$1$.

Our main result states, broadly speaking, that the two
constructions above cover all possible cases:

\begin{thm}\label{thm:main}
  { Let $K$ be an algebraically closed field
  and $F$ be a function field over $K$.}
  Let $S \subseteq F$, $1\in S$, be a space of finite dimension
  $n \geq 4$ and combinatorial genus $\gamma=1$. Then $S$ has genus
  $0$ or $1$. Moreover,
  \begin{itemize}
    \item if $S$ has genus $1$  then $S=L(D)$ for $D$ a divisor of degree $n$,
    \item if $S$ has genus $0$, then $S$ is a subspace of codimension
      $1$ inside a space $L(D)$ for $D$ a divisor of degree $n$.
  \end{itemize}
 \end{thm}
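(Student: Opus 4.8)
The plan is to exploit the subspace lattice of Figure~\ref{fig:subspaces} together with the additive input from Proposition~\ref{prop:A+A=2A}. Fix a valuation $v$ on $F=K(S)$; since $K$ is algebraically closed, $v$ is discrete with value group $\Z$ and residue field $K$, so Proposition~\ref{prop:filtered_basis} gives a filtered basis $(e_1,\ldots,e_n)$ with $e_1=1$, and we may set $x=e_2$. The hypothesis $\dim S^2 = 2n$ means that the codimension of $K=S_1^2$ inside $S^2=S_n^2$ is $2n-2$, so every directed path in the lattice from $S_1^2$ to $S_n^2$ has total weight $2n-2$; since a generic path through the ``staircase'' has $2n-2$ edges all of positive weight, exactly one edge on each such path can have weight $2$, all others weight $1$. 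First I would analyze \emph{where} the weight-$2$ edge sits. By the path-counting argument and Lemma~\ref{lem:codim1}, I expect to show that either (a) all edges on the bottom row $S_1^2\to S_1S_2\to\cdots\to S_1S_n$ have weight $1$ and the single ``defect'' occurs on the way up from $S_1S_n$ to $S_n^2$, or (b) there is a single weight-$2$ edge low down that propagates; tracking this carefully, via Lemma~\ref{lem:vosper}, should force $S_k=\langle 1,x,\ldots,x^{k-1}\rangle$ for all $k$ up to some index, i.e.\ most of $S$ is already in geometric progression.

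Next I would split into the genus-$0$ and genus-$1$ cases according to the structure just found. In the ``geometric progression with one defect'' case, I would show $S$ lies in $K(x)$, so the genus is $0$, and that the set $A=v(S)$ (or rather $-v_\infty(S)$ for the place at infinity) satisfies $|A+A|=|v(S)+v(S)|\le \dim S^2 = 2|A|$; then Proposition~\ref{prop:A+A=2A} gives that $A$ is an arithmetic progression with one missing element at an end, and one deduces that $S$ is codimension $1$ inside $L(D)$ for a divisor $D$ with $\deg D = n$ (supported at $P_\infty$ after translating by $(a)$ as in the remark following Theorem~\ref{thm:mumford}). In the remaining case, where the lattice analysis shows the filtration is \emph{not} a pure geometric progression and $x$ alone does not generate a genus-$0$ function field of the right shape, I would argue that $F=K(S)$ has genus exactly $1$: the transcendence degree is $1$ by the Consequence following Theorem~\ref{thm:freiman_field}, and Theorem~\ref{thm:freiman_field} together with the genus-$0$ classification (Theorem~\ref{thm:genus0} and the remark identifying genus-$0$ combinatorial-genus-$0$ spaces) rules out genus $0$; to bound the genus above by $1$ I would use $\dim S^2 = 2\dim S - 1 + 1$ against the Riemann-Roch estimate, showing a higher genus would force $\dim S^2$ strictly larger. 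Once the genus is pinned to $1$, I would let $D$ be the pole divisor of $S$ relative to the filtration, so $S\subseteq L(D)$ with $\deg D = n$ forced by counting (since $\dim L(D) = \deg D + 1 - g = n$ once $\deg D \ge 2g+1 = 3$, and $\dim S = n$), giving $S = L(D)$; consistency with $\dim S^2 = 2n$ follows from Mumford's Theorem~\ref{thm:mumford} since $L(D)L(D) = L(2D)$ has dimension $2n+1-1 = 2n$.

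The main obstacle, I expect, is the case analysis of where the weight-$2$ edge can occur in the lattice and showing that the two families (codimension-$1$-in-genus-$0$ and genus-$1$ Riemann-Roch) are genuinely exhaustive — in particular ruling out ``mixed'' configurations where a weight-$2$ edge appears in an awkward interior position that neither reduces to Lemma~\ref{lem:vosper} nor directly to a Riemann-Roch space. Handling this will likely require a finer study of the spaces $S_iS_j$ and of how the minimal-valuation elements $e_ie_j$ can coincide or not, possibly invoking a second valuation (at another place) to get complementary information on $v(S)$, and then piecing together the local data into a global divisor via the Riemann-Roch theorem. A secondary delicate point is the borderline hypotheses in Mumford's theorem ($\deg D \ge 2g$, $\deg D' \ge 2g+1$): with $g=1$ this needs $n\ge 4$, which is exactly why that hypothesis appears in the statement, and I would need to check the small cases are not lost.
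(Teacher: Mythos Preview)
Your architecture is close to the paper's, but there are two genuine gaps that the paper fills with machinery you have not supplied.

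First, your argument that the genus is at most $1$ does not work. There is no general inequality $\dim S^2 \geq 2\dim S - 1 + g$ for an arbitrary subspace $S$ of a genus-$g$ function field; that holds when $S$ is already a Riemann--Roch space of large enough degree, but this is precisely what you are trying to prove. The paper instead shows first that $F = K(S_3) = K(x,y)$ (a non-obvious step: one uses $\dim S_{i-1}S_i - \dim S_{i-1}^2 \leq 2$ to force $e_i \in K(S_{i-1})$ for $i \geq 4$), and then extracts from $\dim S_3S_4 \leq 7$ an explicit algebraic relation of degree at most $3$ between $x$ and $y$. B\'ezout's theorem on the resulting plane curve then gives $g \leq 1$.

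Second, and more seriously, your plan to take $D$ to be the pole divisor of $S$ and conclude $\deg D = n$ ``by counting'' does not work: knowing $v_P(S)$ at a single place $P$ says nothing about poles elsewhere, so $\deg D_S$ could a priori be much larger than $n$. The paper's key device is to turn the weight-$1$ equalities $S_iS_{j+1} = S_{i+1}S_j$ into the divisor identity $D_{S_{j+1}} - D_{S_j} = D_{S_3} - D_{S_2}$ valid at \emph{every} place, for all $j$ away from a single exceptional index $j_0$ (the $P$-index). One then determines $D_{S_2}$ and $D_{S_3}$ explicitly, using the degree-$3$ equation just found and how the place at infinity of $K(x)$ splits in $F$, to obtain $D_{S_i} = (i-1)P + Q$ for a single place $Q$; separation arguments are needed to rule out extra poles and to force $j_0 \in \{2, n-1\}$. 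This ``second valuation'' step, which you mention only as a possible repair for your acknowledged obstacle, is in fact the heart of the proof and cannot be bypassed.
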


We remark that in the genus $g=0$ case, all subspaces of codimension
$1$ inside an $L(D)$
space do not necessarily have combinatorial genus $\gamma$ equal to
$1$. 
{ For instance the space $S = \langle 1, x, x^3, x^4 \rangle$
has codimension $1$ in a Riemann Roch space of a field of genus
$0$, while it has combinatorial genus $2$.}
We postpone to Section~\ref{sec:genus0} the precise
characterisation of such subspaces which is slightly more involved
than in the additive case given by Proposition~\ref{prop:A+A=2A}.

\section{Proof of Theorem~\ref{thm:main}}\label{sec:proof_main}

\subsection{Overview}
Since the proof of the theorem is somewhat lengthy, we give an
outline.

As we have argued before, we may always assume that $1\in S$, and that
$F=K(S)$. We start by fixing an arbitrary place $P$ and an associated
$P$-filtered basis which, possibly after replacing $S$ by a
multiplicative translate $s^{-1}S$, is of the form
$$e_1=1,e_2=x,e_3=y,\ldots, e_n$$
with valuations in decreasing order $v_P(1)=0>v_P(x)>v_P(y)>\cdots
>v_P(e_n)$.

Next we define (Section~\ref{sec:divisor}) the divisor $D_U$ of a
space $U$ to be the smallest {divisor such that the}
Riemann-Roch space {$L(D_U)$} contains $U$. Our
strategy will be to study closely the chain of divisors $D_{S_i}$ for
the filtration $(S_i)$ of $S$ relative to $P$. Our goal will be to
show that $S_i$ is either equal to $L(D_{S_i})$ or of codimension $1$
inside $L(D_{S_i})$, for all $i\geq 2$.

We will consider closely the lattice of subspaces introduced in
Section~\ref{sec:lattice} and exploit the fact that most of its edges are of
weight $1$. A crucial intermediate result will be Lemma~\ref{lem:D_Si}
which will tell us that the divisor increments $D_{S_{i+1}}-D_{S_i}$ 
must all be equal to $D_{S_3}-D_{S_2}$, except possibly for one index $i$, that we
call the $P$-index of $S$, which is the
unique index $i$ for which $\dim S_iS_{i+1}>\dim S_i^2+1$.
Sections 
\ref{sec:separation} and \ref{sec:Pindex} build up material leading up
to Section~\ref{sec:changing} which derives Lemma~\ref{lem:D_Si}.

Section~\ref{sec:genus} considers next the algebraic equations
satisfied by $x$ and $y$. We will show that $F=K(S)=K(x,y)$ and that
$F$ must be of genus $g\leq 1$. We then turn to determining the
sequence $(D_{S_i})$ exactly. Lemma~\ref{lem:D_Si} tells us that when
the $P$-index equals $2$, the divisors $D_{S_2}$ and $D_{S_3}$
determine the whole sequence. In Section~\ref{sec:Pindex=2} we show
that in this case we must have $D_{S_2}=P+Q$ and $D_{S_3}=2P+Q$ for
some place $Q$ (possibly equal to $P$), so that the
whole sequence of divisors must take the form
$0,P+Q,2P+Q,3P+Q,\ldots ,(n-1)P+Q$.  Section~\ref{sec:Pindex>2} deals
with the remaining case, for which it is shown that the sequence of
divisors must be of the form $0,P,2P,\ldots ,(n-2)P,(n-1)P+Q$.

\subsection{Minimum valuations, the divisor of a space}
\label{sec:divisor}

The following lemma is straightforward:

\begin{lemma}\label{lem:sum_min_val}
  Let $U,V$
  be two finite dimensional $K$--spaces in $F$. Let $v$ be a valuation on $F$.
  Then,
  $\min v(UV) = \min v(U)+ \min v(V)$.
\end{lemma}

Note that $\min v(U) = 0$ for all valuations $v$ on $F$ but finitely many
of them. This justifies the following:

\begin{defn}\label{def:DU}
We denote by $D_U$ the divisor
$$
D_U := \sum_{P,\ {\rm place \ of\  }F} -\min v_P(U)P.
$$
This is the smallest divisor $D$ such that $U \subseteq L(D)$.  
\end{defn}

\subsection{Separation}
\label{sec:separation}

\begin{defn}[Separation]
  Given a finite dimensional $K$-space
  $U \subset F$ and two distinct places $P_1$ and $P_2$, one says that
  $U$ {\em separates} $P_1$ and $P_2$ if
  there exists $f_1, f_2 \in U$ such that
  \begin{enumerate}
    \item $v_{P_i} (f_i) = \min v_{P_i}(U)$ for $i \in \{1,2\}$;
    \item for $i \neq j \in \{1,2\}$, $v_{P_i}(f_j) > \min v_{P_i}(U)$.
  \end{enumerate}
\end{defn}

Given a $K$-space $U \subset F$ and a place $P$, let $U_P := \{
u \in U ~|~ v_P(u) > \min v_P(U)\}$. Such a space has codimension $1$
in $U$ and the notion of separation of two distinct places can be reformulated
as follows.

\begin{lemma}\label{lem:U_pU_Q}
  The space $U$ separates two distinct places $P, Q$ if and only if
  $U_P \neq U_Q$.
\end{lemma}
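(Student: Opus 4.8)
The plan is to unwind the two definitions and observe that ``$U$ separates $P$ and $Q$'' is nothing more than a reformulation of the assertion that the hyperplanes $U_P$ and $U_Q$ of $U$ are distinct. First I would record the elementary linear-algebra fact underlying everything: since $U_P$ and $U_Q$ both have codimension $1$ in $U$, the inequality $U_P \neq U_Q$ is equivalent to $U_P \not\subseteq U_Q$, and also to $U_Q \not\subseteq U_P$; in particular $U_P \neq U_Q$ holds if and only if there exist $f_1 \in U_Q \setminus U_P$ and $f_2 \in U_P \setminus U_Q$.

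For the forward implication, suppose $U$ separates $P$ and $Q$, with witnessing functions $f_1, f_2 \in U$ (taking $P_1 = P$, $P_2 = Q$ in the definition). The condition $v_P(f_1) = \min v_P(U)$ says exactly that $f_1 \notin U_P$, while $v_Q(f_1) > \min v_Q(U)$ says $f_1 \in U_Q$. Hence $U_Q \not\subseteq U_P$, and therefore $U_P \neq U_Q$.

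For the converse, assume $U_P \neq U_Q$. Using the remark above, choose $f_1 \in U_Q \setminus U_P$ and $f_2 \in U_P \setminus U_Q$. Because $v_P(u) \geq \min v_P(U)$ for every $u \in U$, the relation $f_1 \notin U_P$ forces $v_P(f_1) = \min v_P(U)$, while $f_1 \in U_Q$ gives $v_Q(f_1) > \min v_Q(U)$; symmetrically $v_Q(f_2) = \min v_Q(U)$ and $v_P(f_2) > \min v_P(U)$. These are precisely conditions (1) and (2) in the definition of separation, so $U$ separates $P$ and $Q$.

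I do not expect any genuine obstacle here: the statement is a direct translation between the ``separation'' language and the language of the codimension-$1$ subspaces $U_P$, $U_Q$. The only mildly delicate point is the passage from the mere inequality $U_P \neq U_Q$ to the existence of the two separating functions, which rests on the observation that two distinct subspaces of the same codimension $1$ cannot be contained in one another.
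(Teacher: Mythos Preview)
Your proof is correct. In fact the paper states this lemma without proof, treating it as an immediate reformulation of the definition; your argument spells out precisely the unpacking of definitions that the authors left implicit, and is the natural way to do so.
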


A first example of spaces having a good property of separation are
Riemann-Roch spaces. The following statement is classical, we provide
a proof for the sake of self-containedness.

\begin{lemma}\label{lem:L(D)sep}
  Let $F$ be a function field of genus $g$ and let $D$ be a divisor of $F$
  such that $\deg D > 2g$. Then the space $L(D)$
  separates any two places $P$ and $Q$. 
\end{lemma}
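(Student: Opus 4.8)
The plan is to show that $L(D)$ separates two distinct places $P$ and $Q$ by producing, for each of them, a function in $L(D)$ that attains the minimal valuation at one of the places but not at the other. By Lemma~\ref{lem:U_pU_Q}, it is equivalent to show $L(D)_P \neq L(D)_Q$, and since each of these has codimension $1$ in $L(D)$, it suffices to exhibit a single function $f \in L(D)$ lying in exactly one of $L(D)_P$, $L(D)_Q$.

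First I would compute the minimal valuations explicitly. Since $D$ is an effective-type divisor in the sense that $-\min v_R(L(D)) = D$ at every place (this is precisely what Definition~\ref{def:DU} and the inclusion $L(D) \subseteq L(D)$ give, provided $L(D)$ actually realises these valuations), I need to check that $\min v_R(L(D)) = -v_R(D)$ for every place $R$, and in particular for $R \in \{P,Q\}$. This is where the degree hypothesis $\deg D > 2g$ enters: by Riemann-Roch, $\dim L(D) = \deg D + 1 - g$, and for any place $R$ one has $\dim L(D - R) = \deg D - 1 + 1 - g = \dim L(D) - 1$ as soon as $\deg(D-R) \geq 2g - 1$, i.e. $\deg D \geq 2g$, which holds. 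Hence $L(D-R) \subsetneq L(D)$ strictly, so there is a function in $L(D)$ with $v_R = -v_R(D)$ exactly; this shows $D_{L(D)} = D$ and identifies $L(D)_R = L(D - R)$.

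The key step is then: I want $L(D - P) \neq L(D - Q)$ as subspaces of $L(D)$. Both have codimension $1$, so equality would force $L(D-P) = L(D-Q) = L(D - P - Q)$ (their intersection already has codimension at most $2$, but if the two codimension-$1$ spaces coincide then their common value equals $L(D-P-Q)$, which has codimension $1$). But $\dim L(D - P - Q) = \deg D - 2 + 1 - g = \dim L(D) - 2$ provided $\deg(D - P - Q) \geq 2g - 1$, i.e. $\deg D \geq 2g + 1$. This would give codimension $2$, a contradiction with codimension $1$. The only subtlety is the boundary case $\deg D = 2g$: here $\deg(D - P - Q) = 2g - 2$ and Riemann-Roch does not directly pin down $\dim L(D-P-Q)$. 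However, one always has $\dim L(D - P - Q) \leq \dim L(D - P) - 1$ whenever $L(D-P-Q) \subsetneq L(D-P)$, and $L(D - P - Q) = L(D-P)$ would mean every function in $L(D-P)$ also vanishes appropriately at $Q$; since $\deg(D-P) = 2g - 1 \geq g$ for $g \geq 1$ (and the case $g = 0$ is immediate as $\deg D > 0$ forces $L(D-P) \neq L(D-P-Q)$ by a direct degree count), one can argue $\dim L(D-P) > \dim L(D-P-Q)$, so again $L(D-P) \neq L(D-P-Q)$ and hence $L(D-P) \neq L(D-Q)$.

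The main obstacle I expect is handling the boundary degree $\deg D = 2g$ cleanly, since Riemann-Roch only gives an inequality $\dim L(G) \geq \deg G + 1 - g$ once $\deg G \leq 2g - 2$; the cleanest fix is to first reduce to $\deg D \geq 2g + 1$ for the separation-of-two-places argument by noting that separation is monotone (if $L(D)$ separates $P,Q$ then so does $L(D')$ for $D' \geq D$) — wait, that goes the wrong way; instead I would simply run the dimension-counting argument using only the two inequalities $\dim L(D - P) \leq \dim L(D)$ and $\dim L(D - P - Q) \geq \deg D - 2 + 1 - g$ together with $\dim L(D) = \deg D + 1 - g$, which already forces the strict chain $L(D-P-Q) \subsetneq L(D-P) \subsetneq L(D)$ and hence $L(D)_P = L(D-P) \neq L(D-P-Q) = L(D-P)_Q$, completing the proof.
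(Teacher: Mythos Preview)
Your approach is essentially the paper's: identify $L(D)_R = L(D-R)$, and use Riemann--Roch to show that $L(D-P)$, $L(D-Q)$, $L(D-P-Q)$ have codimensions $1$, $1$, $2$ in $L(D)$, whence $L(D-P)\neq L(D-Q)$.

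The one genuine slip is your ``boundary case $\deg D = 2g$'': it does not exist. The hypothesis is the \emph{strict} inequality $\deg D > 2g$, and since in this part of the paper $K$ is algebraically closed, every place has degree $1$ and divisor degrees are integers, so $\deg D > 2g$ means $\deg D \geq 2g+1$. Then $\deg(D-P-Q)\geq 2g-1>2g-2$ and Riemann--Roch applies directly to give $\dim L(D-P-Q)=\dim L(D)-2$. Your entire discussion of the case $\deg D=2g$ is unnecessary.

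It is just as well, because your attempted fix for that phantom case does not work: the Riemann--Roch \emph{lower} bound $\dim L(D-P-Q)\geq \deg D-1-g$ goes the wrong way for what you need (you would need an \emph{upper} bound of $\dim L(D)-2$ to force strict containment in $L(D-P)$), so the final sentence does not establish the strict chain you claim. Once you drop the spurious boundary case, your proof coincides with the paper's three-line argument.
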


\begin{proof}
  Since $\deg (D) > 2g$, we have as a consequence of the Riemann-Roch
  Theorem
  \begin{eqnarray*}
    \dim L(D-Q) &=& \dim L(D)-1\\
    \dim L(D-P) &=& \dim L(D)-1\\
    \dim L(D-P-Q) &=& \dim L(D)-2.
  \end{eqnarray*}
  Indeed, all the considered divisors have degree greater than $2g-2$.
  If we set $U = L(D)$ we obtain $U_P \neq U_Q$ and conclude using
  Lemma~\ref{lem:U_pU_Q}.
\end{proof}

The next lemma deals with separation in products of spaces.

\begin{lemma}\label{lem:sep_products}
Let $U, V \subseteq F$ be two $K$-spaces and $P, Q$ two places of $F$.
Then, $UV$ separates $P$ and $Q$ if and only if $U$ or $V$ separates
$P$ and $Q$. 
\end{lemma}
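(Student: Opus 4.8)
The plan is to prove the equivalence in two directions, using the characterisation of separation provided by Lemma~\ref{lem:U_pU_Q}, namely that a space $W$ separates $P$ and $Q$ if and only if $W_P \neq W_Q$, where $W_P = \{w \in W \mid v_P(w) > \min v_P(W)\}$.

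\textbf{The easy direction.} First I would show that if $U$ (say) separates $P$ and $Q$, then so does $UV$. Pick $f_1, f_2 \in U$ witnessing separation, and pick any $g \in V$ with $v_P(g) = \min v_P(V)$ and $v_Q(g) = \min v_Q(V)$; such a $g$ exists because a generic element of $V$ (or more concretely, an element of a filtered basis adapted to both $P$ and $Q$ — one may take a sum of suitably scaled filtered-basis elements) attains the minimum valuation simultaneously at both places, since over an algebraically closed field the residue fields are $K$ and one argues as in Proposition~\ref{prop:filtered_basis}. Then $f_1 g, f_2 g \in UV$, and using Lemma~\ref{lem:sum_min_val} ($\min v_{P}(UV) = \min v_P(U) + \min v_P(V)$, likewise at $Q$), one checks directly that $f_1 g$ and $f_2 g$ witness that $UV$ separates $P$ and $Q$: indeed $v_P(f_1 g) = \min v_P(U) + \min v_P(V) = \min v_P(UV)$, while $v_Q(f_1 g) > \min v_Q(UV)$, and symmetrically for $f_2 g$.

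\textbf{The hard direction.} The substantive content is the converse: if $UV$ separates $P$ and $Q$, then $U$ or $V$ does. Equivalently, contrapositively: if neither $U$ nor $V$ separates $P$ and $Q$, then $UV$ does not. So assume $U_P = U_Q$ and $V_P = V_Q$. I want to conclude $(UV)_P = (UV)_Q$. The natural approach is to understand the structure of an element $h \in UV$ with $v_P(h) > \min v_P(UV)$ and show it also satisfies $v_Q(h) > \min v_Q(UV)$. Write $h = \sum_k u_k v_k$ with $u_k \in U$, $v_k \in V$. After normalising (dividing by elements realising the minimal valuations, i.e.\ reducing to the case $\min v_P(U) = \min v_P(V) = \min v_P(U\hspace{-1pt}V) = 0$ and likewise for $Q$, which is legitimate since separation is invariant under multiplication by a fixed nonzero function), the hypotheses say: the image of $U$ in the residue field $\OO_P/P = K$ under ``reduction mod $P$'' equals its image under reduction mod $Q$ — more precisely, $u \in U$ reduces to $0$ mod $P$ iff it reduces to $0$ mod $Q$, and the same for $V$. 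The key algebraic point is then that the reduction-mod-$P$ map on $UV$ factors through the product of the reductions of $U$ and $V$: the residue of $\sum u_k v_k$ mod $P$ is $\sum \bar u_k \bar v_k$ where $\bar{\cdot}$ denotes residue mod $P$. Hence $h \in (UV)_P$ iff $\sum \bar u_k \bar v_k = 0$ in $K$ (with residues taken at $P$), and $h \in (UV)_Q$ iff the same sum of products of residues-at-$Q$ vanishes. I expect this to require a careful bilinearity argument: one fixes decompositions and uses that the residue map $\OO_P \to K$ is a ring homomorphism, together with $U_P = U_Q$, $V_P = V_Q$, to match up the two residue computations. The main obstacle I anticipate is handling the non-uniqueness of the representation $h = \sum u_k v_k$ and making sure the argument that ``$P$-residue of $h$ is zero $\Leftrightarrow$ $Q$-residue of $h$ is zero'' is genuinely representation-independent — this is where one must use $U_P = U_Q$ and $V_P = V_Q$ essentially, not just the equality of dimensions.

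\textbf{Wrapping up.} Once the residue-map factorisation is established, the conclusion is immediate: the condition defining $(UV)_P$ inside $UV$ and the condition defining $(UV)_Q$ become literally the same linear condition on the coefficients (because the residue-at-$P$ and residue-at-$Q$ bilinear pairings $U \times V \to K$ agree after the identifications forced by $U_P = U_Q$ and $V_P = V_Q$), so $(UV)_P = (UV)_Q$, i.e.\ $UV$ does not separate $P$ and $Q$. Combined with the easy direction, this gives the stated equivalence. I would present the hard direction by first reducing to the normalised setting, then stating cleanly the bilinear pairing $\beta_P\colon U \times V \to K$, $(u,v) \mapsto \overline{uv}^{\,P}$, observing $\beta_P$ and $\beta_Q$ have the same kernel behaviour under the hypotheses, and finally reading off $(UV)_P$ and $(UV)_Q$ as the kernels of the induced maps on $U \otimes V$.
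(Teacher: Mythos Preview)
Your plan is sound, but both directions are more circuitous than the paper's argument. For the forward direction, the paper simply takes $a \in U$ with $v_P(a) = \min v_P(U)$ and $v_Q(a) > \min v_Q(U)$, together with \emph{any} $b \in V$ attaining $\min v_P(V)$ (no condition at $Q$ at all): then $ab \in (UV)_Q \setminus (UV)_P$ directly, so your construction of a $g$ with simultaneously minimal valuations at both places is unnecessary. For the converse, the paper bypasses residue maps entirely: writing $U = U_P \oplus Ku$ and $V = V_P \oplus Kv$, one has $UV = (U_PV_P + uV_P + U_Pv) + Kuv$, and since the bracketed space lies in $(UV)_P$ and has codimension at most $1$ in $UV$, it \emph{equals} $(UV)_P$; substituting $U_P = U_Q$ and $V_P = V_Q$ into the same expression and repeating the argument at $Q$ gives $(UV)_P = (UV)_Q$ in one line. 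Your bilinear-pairing framework also works, but the point that needs to be made explicit is that $U_P = U_Q$ forces the two residue functionals $U \to K$ to be \emph{proportional} (same kernel, one-dimensional target), so that $\beta_P$ and $\beta_Q$ differ only by a nonzero scalar and hence vanish on the same elements of $UV$ --- this is the actual content, whereas your stated worry about representation-independence is a red herring, since the residue of $h \in UV$ is intrinsic to $h$. The paper's decomposition is shorter and entirely elementary; your approach is more conceptual and would perhaps adapt more readily to products of several spaces.
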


\begin{proof} Let us suppose first that $U$ separates $P$ and $Q$. 
Let $a \in U$ be such that $v_P(a) = \min v_P(U)$ and $v_Q(a) > \min v_Q(U)$.
Let $b \in V$ be such that $v_P(b) = \min v_P(V)$.
We have $ab \in (UV)_Q \setminus (UV)_P$, hence $UV$ separates $P$
and $Q$. 

Conversely, suppose that neither $U$ nor $V$ separates $P$ and $Q$. 
Let $u \in U$ and $v\in V$ be such that 
$$
U = U_P \oplus Ku \qquad {\rm and} \qquad V = V_P \oplus Kv.
$$
Then, 
$$
UV = (U_P V_P + uV_P + U_P v) + Kuv. 
$$
Clearly $(U_P V_P + uV_P + U_P v) \subseteq {(UV)}_P$ and since
$(U_P V_P + uV_P + U_P v)$ has codimension at most $1$ in $UV$ we conclude
that
$$
(U_P V_P + uV_P + U_P v) = {(UV)}_P.
$$
By assumption, we have $U_P = U_Q$ and $V_P = V_Q$ and hence
$$
{(UV)}_P = U_Q V_Q + u V_Q + U_Q v = {(UV)}_Q
$$
so $UV$ does not separate $P$ and $Q$ either. This concludes the proof.
\end{proof}

\subsection{The lattice of subspaces and the $P$-index of a space}
\label{sec:Pindex}

  For the remainder of Section~\ref{sec:proof_main}, $P$ is a fixed arbitrary place of
  $F=K(S)$. We choose a filtered basis
  $(e_1, \ldots, e_n)$ where, having replaced $S$ by $e_1^{-1}S$ if necessary,
  we have set $e_1=1$. We consider the 
  filtration $S_1 \subset S_2\subset \cdots \subset S_n = S$
  associated to $P$, together with the lattice of subspaces $S_iS_j$ introduced in
  Section~\ref{sec:lattice} and illustrated in
  Figure~\ref{fig:subspaces}.
  Recall that the weight of an edge $V\rightarrow W$ is given by the
  codimension of $V$ in $W$. In the case $\dim S^2=2\dim S$ we have:

\begin{lemma}\label{lem:weight2}
  All edges lying on a directed path from $S_1^2$ to $S_n^2 = S^2$
  have weight $1$ except for an edge which has weight $2$.
\end{lemma}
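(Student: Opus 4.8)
The plan is a short counting argument inside the lattice of Figure~\ref{fig:subspaces}, relying only on the two facts recorded in Section~\ref{sec:lattice}: every edge carries a strictly positive integer weight (the minimum of the relevant valuation set strictly increases along an edge), and the weight of an edge $V\to W$ is the codimension of $V$ in $W$, so that the weights along a directed path telescope.

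First I would fix an arbitrary directed path $S_1^2=T_0\to T_1\to\cdots\to T_m=S^2$ and count its edges. Writing $T_t=S_{i_t}S_{j_t}$, each edge increases exactly one of the two indices by $1$; since the path runs from indices $(1,1)$ to $(n,n)$, each of the two indices is incremented exactly $n-1$ times, so $m=2n-2$, independently of the chosen path. Next I would compute the total weight of the path: since $T_{t-1}\subseteq T_t$ for every $t$, the weights telescope and their sum equals the codimension of $T_0=S_1^2=K$ inside $T_m=S^2$, which is $\dim S^2-1=2\dim S-1=2n-1$ by the hypothesis $\gamma=1$ (using $1\in S$, so $S_1=K$). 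Finally, having $2n-2$ strictly positive integers summing to $2n-1$ forces exactly one of them to equal $2$ and all the others to equal $1$: if they were all equal to $1$ the sum would be $2n-2$, and if two of them were $\geq 2$ the sum would be $\geq 2n$. This is exactly the assertion of the lemma.

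There is no serious obstacle here; the one point worth being careful about is that the statement concerns a single fixed path, and the position of the weight-$2$ edge genuinely depends on the path (already for $S=\langle 1,x^2,x^3,\dots,x^n\rangle$ the lattice carries several weight-$2$ edges, one on each of several paths). Consequently, when invoking the lemma later I would always specify the path of interest — in particular the ``diagonal'' path $S_1^2\to S_1S_2\to S_2^2\to S_2S_3\to S_3^2\to\cdots\to S^2$ — so that the unique weight-$2$ edge on that path can be localised to a definite square and tied to the $P$-index of $S$ announced in the proof outline.
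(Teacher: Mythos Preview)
Your argument is correct and is exactly the paper's proof, just written out in more detail: the paper simply records that any path has $2n-2$ edges, that the sum of the weights equals $\dim S^2-\dim S_1^2=2n-1$, and that since all weights are positive integers this forces one weight to be $2$ and the rest to be $1$. Your additional paragraph about the path-dependence of the weight-$2$ edge is a useful cautionary remark but goes beyond what the lemma itself asserts.
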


\begin{proof}
  The path has $2n-2$ edges, while $\dim S_1^2 = 1$ and $\dim S_n^2
  =\dim S^2 = 2n$, therefore the codimension of $S_1^2$ in $S_n^2$,
  which is also the sum of weights on the path, equals
  $2n-1$. Remembering that weights are at least $1$, the result follows.
\end{proof}

\begin{lemma}\label{lem:vertical}
  In the subspace lattice, every vertical edge from $S_iS_j$ to
  $S_{i+1}S_j$ has weight~$1$ for $i\geq 2$. 
\end{lemma}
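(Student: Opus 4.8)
The plan is to exploit the symmetric structure of the lattice together with Lemma~\ref{lem:weight2}. Fix a vertical edge $S_iS_j \to S_{i+1}S_j$ with $i \geq 2$; we want to show its weight (the codimension of $S_iS_j$ in $S_{i+1}S_j$) is $1$. First I would note that this weight is at least $1$, as already observed when the lattice was introduced (the element $e_{i+1}e_j$ has minimal valuation in $S_{i+1}S_j$ and cannot lie in $S_iS_j$). So it suffices to rule out weight $\geq 2$.

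The key step is to embed this vertical edge in a directed path from $S_1^2$ to $S_n^2$ and invoke Lemma~\ref{lem:weight2}, which says \emph{every} edge on such a path has weight $1$ except exactly one edge of weight $2$. Concretely, I would build a path that starts at $S_1^2 = K$, climbs and moves right to reach $S_iS_j$, then takes the vertical edge $S_iS_j \to S_{i+1}S_j$, and then continues to $S_n^2 = S_n S_n$. Since $i \geq 2$, the vertex $S_iS_j$ is reachable from $S_1^2$ (e.g. go up the leftmost column $S_1^2 \to S_1 S_2 \to \cdots$ then across, or any monotone staircase), and $S_{i+1}S_j$ lies below $S_nS_n$, so such a path exists. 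By Lemma~\ref{lem:weight2}, all but one edge on this path have weight $1$; hence if our vertical edge had weight $\geq 2$, it would have to be \emph{the} unique weight-$2$ edge of that path, and every other edge on the path would have weight exactly $1$.

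To derive a contradiction, I would then produce a \emph{second} path through the same terminal vertex $S_{i+1}S_j$ that also avoids our vertical edge but must contain a weight-$2$ edge somewhere — or, more cleanly, use a commuting-square argument. The cleanest route: consider the square with corners $S_iS_{j-1}$, $S_iS_j$, $S_{i+1}S_{j-1}$, $S_{i+1}S_j$ (valid since $i \geq 2$ forces $i \geq 2 > 1$, and we may take $j \geq 2$ as well, the case $j=1$ being handled by $S_iS_1 = S_i$ whose successive codimensions are $1$ by Proposition~\ref{prop:filtered_basis}). In this square the codimension of $S_iS_{j-1}$ in $S_{i+1}S_j$ equals (going one way) $w(S_iS_{j-1}\to S_iS_j) + w(S_iS_j \to S_{i+1}S_j)$ and (going the other way) $w(S_iS_{j-1}\to S_{i+1}S_{j-1}) + w(S_{i+1}S_{j-1}\to S_{i+1}S_j)$. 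I would pick an ambient path through $S_iS_{j-1}$ and $S_{i+1}S_j$ that uses the \emph{second} route, so that by Lemma~\ref{lem:weight2} the two edges on the second route sum to at most... — here one must be careful, since a single path can legitimately carry the one weight-$2$ edge.

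The honest main obstacle is therefore this: a naive ``every path has at most one weight-$2$ edge'' argument only bounds weights \emph{along one path}, and our vertical edge could a priori be the global exceptional edge. To close this, I expect we need the stronger input that the exceptional weight-$2$ edge, being unique, cannot be a vertical edge with $i \geq 2$ — equivalently, the weight-$2$ edge must occur in the bottom row or first two columns. I would argue: by the path-sum (commuting-square) relations, if a vertical edge $S_iS_j \to S_{i+1}S_j$ with $i\geq 2$ had weight $2$, then \emph{every} path from $S_1^2$ to $S_n^2$ passing through it would have total weight $\geq 2n$ strictly (one weight-$2$ plus $2n-3$ edges of weight $\geq 1$ gives $\geq 2n-1$, with equality forcing all others to be $1$), which is consistent — so instead I would find \emph{two} vertex-disjoint-ish paths, one through $S_iS_j\to S_{i+1}S_j$ and another through a parallel edge (say $S_{i-1}S_{j+1} \to S_iS_{j+1}$), both forced to contain a weight-$2$ edge by the square relation transporting the weight $2$ across; since the global exceptional edge is unique, these two required weight-$2$ edges coincide, which is impossible as the edges are distinct. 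Making this transport precise — i.e. showing that a weight-$2$ vertical edge at level $i\geq 2$ propagates a weight $\geq 2$ to a neighbouring row via Lemma~\ref{lem:codim1} and the square identities — is the crux, and I would organise it as: assume weight $\geq 2$, apply Lemma~\ref{lem:codim1} type reasoning to the squares immediately above and to the right to force a second independent weight-$2$ edge, then contradict Lemma~\ref{lem:weight2}.
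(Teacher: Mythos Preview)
Your proposal has a genuine gap. The purely combinatorial approach you outline---relying only on Lemma~\ref{lem:weight2}, square (path-sum) identities, and Lemma~\ref{lem:codim1}---cannot rule out a vertical weight-$2$ edge at level $i\geq 2$. The misconception surfaces in your final paragraph, where you treat ``the global exceptional edge'' as unique. But Lemma~\ref{lem:weight2} only asserts that \emph{each individual path} carries exactly one weight-$2$ edge; this is perfectly consistent with several distinct weight-$2$ edges, provided no path passes through two of them. Concretely, for $n=4$ the weight assignment giving weight~$2$ to both vertical edges $S_2S_3\to S_3^2$ and $S_2S_4\to S_3S_4$, and weight~$1$ everywhere else, satisfies every path-sum and square identity (every path from $S_1^2$ to $S_4^2$ has total weight $7$, and the square at $S_2S_3$ reads $1+2=2+1$). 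So no amount of ``propagating weight $2$ to a parallel edge and invoking uniqueness'' will produce a contradiction: the second weight-$2$ edge you manufacture is not forbidden by the combinatorics.

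What is missing is the algebraic input of Lemma~\ref{lem:vosper}, and this is exactly what the paper uses. If a vertical edge $S_iS_j\to S_{i+1}S_j$ with $i\geq 2$ had weight~$2$, then by Lemma~\ref{lem:weight2} every edge on every path from $S_1^2$ to $S_iS_j$ has weight~$1$. Lemma~\ref{lem:vosper} (applied with this $i\geq 2$) then forces $S_k=\langle 1,x,\dots,x^{k-1}\rangle$ for all $k\leq j$, hence $\dim S_j^2=2j-1$. But that means every edge on every path from $S_1^2$ to $S_j^2$ has weight~$1$; since $i+1\leq j$, the offending vertical edge lies on such a path, contradicting its weight~$2$. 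It is this step---converting ``all weights below $S_iS_j$ equal $1$'' into a geometric basis for $S_j$ and hence an exact value for $\dim S_j^2$---that supplies the contradiction your lattice-only argument cannot reach.
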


\begin{proof}
  If not, then such an edge has weight $2$, which implies that every
  edge on the sublattice of directed paths from $S_1^2$ to $S_iS_j$ has
  weight $1$ by Lemma~\ref{lem:weight2}. But then,
  Lemma~\ref{lem:vosper} implies that $S_j$ has a basis in geometric
  progression, which in turn implies that $\dim S_j^2=2\dim S_j-1$,
  meaning that all edges on the sublattice of paths from $S_1^2$ to
  $S_j^2$ have weight $1$, a contradiction.
\end{proof}

\begin{figure}[h]
    \centering
    \begin{tikzpicture}[scale=0.8]
\node[name=S22] at (4,4) {$S_2^2$};
\node[name=S23] at (6,4) {$S_2S_3$};
\node[name=S24] at (8,4) {$S_2S_4$};
\node[name=S25] at (10,4) {$S_2S_5$};
\node[name=S26] at (12,4) {$S_2S_6$};
\node[name=S33] at (6,6) {$S_3^2$};
\node[name=S34] at (8,6) {$S_3S_4$};
\node[name=S35] at (10,6) {$S_3S_5$};
\node[name=S36] at (12,6) {$S_3S_6$};
\node[name=S44] at (8,8) {$S_4^2$};
\node[name=S45] at (10,8) {$S_4S_5$};
\node[name=S46] at (12,8) {$S_4S_6$};
\node[name=S55] at (10,10) {$S_5^2$};
\node[name=S56] at (12,10) {$S_5S_6$};
\node[name=S66] at (12,12) {$S_6^2$};

\draw (S22) --node[above]{$1$}  (S23)--node[above]{$1$}(S24)--node[above]{$2$}(S25)--node[above]{$1$}(S26)--node[left]{$1$}(S36)--node[left]{$1$}(S46)--node[left]{$1$}(S56)--node[left]{$1$}(S66);
\draw (S33)--node[above]{$1$}(S34)--node[above]{$2$}(S35)--node[above]{$1$}(S36);
\draw (S44)--node[above]{$2$}(S45)--node[above]{$1$}(S46);
\draw (S25)--node[left]{$1$}(S35)--node[left]{$1$}(S45)--node[left]{$1$}(S55)--node[above]{$1$}(S56);
\draw (S23)--node[left]{$1$}(S33);
\draw (S24)--node[left]{$1$}(S34)--node[left]{$1$}(S44);
\end{tikzpicture}
    \caption{$n=6$ and $j_0=4$ for the $P$-index}
    \label{fig:Pindex}
  \end{figure}

Since any path from $S_1^2$ to $S_n^2$ has exactly one edge of weight
$2$ (Lemma~\ref{lem:weight2}), Lemma~\ref{lem:vertical}
implies that all horizontal edges $S_iS_j \rightarrow S_iS_{j+1}$ of
weight $2$ occur for a common index $j$, that we call the {\em
  $P$--index} of the space $S$.
Summarising:

\begin{lemma}\label{lem:Pindex}
  There is an index $j_0$, called the {\em $P$-index} of $S$, such that
  $\dim(S_{i}S_{j_0+1})-\dim(S_iS_{j_0})=2$
  for every $i=2\ldots j_0$.
\end{lemma}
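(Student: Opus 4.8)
The statement to establish is Lemma~\ref{lem:Pindex}: that all horizontal weight-$2$ edges in the subspace lattice occur in a single column $j_0$, and that for this column the jump $\dim(S_iS_{j_0+1})-\dim(S_iS_{j_0})=2$ holds uniformly for $i=2,\dots,j_0$. The plan is to combine the two preparatory lemmas already at hand — Lemma~\ref{lem:weight2} (exactly one weight-$2$ edge on any monotone path from $S_1^2$ to $S_n^2$) and Lemma~\ref{lem:vertical} (every vertical edge $S_iS_j\to S_{i+1}S_j$ with $i\ge 2$ has weight $1$) — with the commutativity-of-paths principle: the sum of weights along any two directed paths between the same pair of vertices agrees, since both equal the codimension.

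First I would argue that the weight-$2$ edges all lie in one column. Suppose a horizontal edge $S_iS_j\to S_iS_{j+1}$ has weight $2$ for some $i\ge 2$. Pick another row $i'\ge 2$; I want to show $S_{i'}S_j\to S_{i'}S_{j+1}$ also has weight $2$. Consider the commuting square with corners $S_iS_j, S_iS_{j+1}, S_{i+1}S_j, S_{i+1}S_{j+1}$ (take $i'=i+1$ and iterate). By Lemma~\ref{lem:vertical} the two vertical edges of this square have weight $1$, so walking around the square gives $2 + 1 = w(S_{i+1}S_j\to S_{i+1}S_{j+1}) + 1$, forcing that bottom horizontal edge to have weight $2$ as well; inducting upward and downward on the row index (staying with $i\ge 2$, and using that rows $i\le j$ are the meaningful ones since $S_iS_j$ with $i>j$ is the same vertex as $S_jS_i$) shows all horizontal edges in column $j$ between rows $2$ and $j$ have weight $2$. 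Conversely, if two columns $j<j'$ each contained a weight-$2$ horizontal edge, I could build a monotone path from $S_1^2$ to $S_n^2$ passing through a weight-$2$ edge in column $j$ and then a weight-$2$ edge in column $j'$, contradicting Lemma~\ref{lem:weight2}; so there is at most one such column, and Lemma~\ref{lem:weight2} guarantees at least one (any path has exactly one weight-$2$ edge, and once past the initial segment $S_1S_j$ the edge cannot be a vertical edge with $i\ge 2$ by Lemma~\ref{lem:vertical}, hence it is horizontal). Call this column $j_0$.

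It then remains to verify the quantitative claim $\dim(S_iS_{j_0+1})-\dim(S_iS_{j_0})=2$ for every $i=2,\dots,j_0$, which is exactly the assertion that the horizontal edge in row $i$, column $j_0$ has weight $2$ — established in the previous paragraph — together with the observation that these edges exist in the lattice precisely for $i\le j_0$, since $S_iS_{j_0}$ and $S_iS_{j_0+1}$ are distinct lattice vertices only when $i\le j_0+1$, and the row index is required to be $\ge 2$. I expect the main subtlety to be purely bookkeeping: making sure the induction on rows stays inside the range $2\le i\le j_0$ where Lemma~\ref{lem:vertical} applies and where the vertices $S_iS_j$ are genuinely arranged in the commuting-square pattern of Figure~\ref{fig:subspaces}, rather than any real difficulty — all the structural content has already been extracted in Lemmas~\ref{lem:weight2} and~\ref{lem:vertical}, and this lemma is essentially their formal consequence.
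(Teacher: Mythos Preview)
Your proposal is correct and follows essentially the same route as the paper: the paper simply observes that Lemma~\ref{lem:weight2} and Lemma~\ref{lem:vertical} together force all weight-$2$ horizontal edges to occur in a single column~$j_0$, and your commuting-square propagation is precisely the mechanism (left implicit in the paper) that makes this work. One small point worth tightening is your existence argument: the weight-$2$ edge on a given path could in principle be the vertical edge $S_1S_j\to S_2S_j$ (which Lemma~\ref{lem:vertical} does not cover), so you should pick a specific path---e.g.\ $S_1^2\to S_1S_2\to S_2^2\to S_2S_3\to\cdots\to S_2S_n\to\cdots\to S_n^2$---on which the unique row-$1$ vertical edge $S_1S_2\to S_2^2$ visibly has weight~$1$ (since $\dim S_2^2=3$), forcing the weight-$2$ edge to be horizontal in row~$\geq 2$.
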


The above statement is illustrated on Figure~\ref{fig:Pindex}.

We will see later that there are only two possible values for the
$P$-index, namely $j_0=2$ and $j_0=n-1$.

\subsection{Changing the valuation}
\label{sec:changing}

A single valuation $v_P$ may not be enough to describe sufficiently the
spaces $S_i$, and it will be useful to involve alternative valuations.
We now argue that some information
obtained from a valuation $v_P$ may be ``transferred'' and hence provide 
some information with respect to another valuation $v_Q$.
When all weights are equal to $1$ on the sublattice from $S_iS_j$ to
$S_{i+1}S_{j+1}$, i.e. in the situation illustrated on Figure~\ref{fig:square},
\begin{figure}[h]
  \centering
  \begin{tikzpicture}
\node[name=S00] at (0,0) {$S_iS_j$};
\node[name=S01] at (0,2) {$S_{i+1}S_{j}$};
\node[name=S10] at (2,0) {$S_{i}S_{j+1}$};
\node[name=S11] at (2,2) {$S_{i+1}S_{j+1}$};

\draw (S00) --node[above]{$1$} (S10) --node[left]{$1$} (S11)
--node[above]{$1$} (S01)--node[left]{$1$} (S00);
\end{tikzpicture}
  \caption{weights are equal to $1$}
  \label{fig:square}
\end{figure}
we have already observed (Lemma~\ref{lem:codim1}) that
$S_{i}S_{j+1}=S_{i+1}S_{j}$. 

>From this equality, we conclude that for any place $Q$,
we have 
$$v_Q(S_{i}S_{j+1})=v_Q(S_{i+1}S_{j}),$$ 
and hence,
thanks to Lemma~\ref{lem:sum_min_val},
\begin{equation}\label{eq:delta}
-\min v_Q(S_{j+1})+\min v_Q(S_j) =-\min v_Q(S_{i+1})+\min v_Q(S_i).
\end{equation}
Applying \eqref{eq:delta} when $i=2$ yields the following useful lemma.
\begin{lemma}\label{lem:D_Si}
For every $j\geq 2$ that differs from the $P$-index $j_0$ of $S$,
we have $D_{S_{j+1}}-D_{S_j}=D_{S_3}-D_{S_2}$.
\end{lemma}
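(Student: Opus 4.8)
The plan is to derive Lemma~\ref{lem:D_Si} directly from equation~\eqref{eq:delta} by choosing, for each relevant index $j$, a sublattice square with all four edges of weight $1$, and then summing over all places $Q$.

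\medskip

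\textbf{Step 1: a weight-$1$ square for each $j \neq j_0$.} Fix $j \geq 2$ with $j \neq j_0$. I want to apply the discussion preceding \eqref{eq:delta} with $i=2$, i.e.\ to the square with vertices $S_2S_j$, $S_3S_j$, $S_2S_{j+1}$, $S_3S_{j+1}$ as in Figure~\ref{fig:square}. The four edges of this square are: the two horizontal edges $S_2S_j \to S_2S_{j+1}$ and $S_3S_j \to S_3S_{j+1}$, and the two vertical edges $S_2S_j \to S_3S_j$ and $S_2S_{j+1} \to S_3S_{j+1}$. The vertical edges have weight $1$ by Lemma~\ref{lem:vertical} (since the bottom index is $2 \geq 2$; note $S_3S_j \to S_3S_{j+1}$ and $S_3S_{j+1}$ exist as genuine lattice vertices when $j+1 \leq n$, and the degenerate cases where indices coincide are handled by the convention that $S_iS_i=S_i^2$, which does not affect the codimension bookkeeping). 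The horizontal edges have weight $1$ precisely because $j \neq j_0$: by Lemma~\ref{lem:Pindex}, horizontal edges of weight $2$ occur only at the $P$-index $j_0$, so $S_2S_j \to S_2S_{j+1}$ and $S_3S_j \to S_3S_{j+1}$ both have weight $1$. Hence all four edges of the square have weight $1$.

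\medskip

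\textbf{Step 2: apply Lemma~\ref{lem:codim1} and pass to valuations.} Since the square of Figure~\ref{fig:square} has all weights equal to $1$, Lemma~\ref{lem:codim1} gives $S_2S_{j+1} = S_3S_j$. Consequently, for every place $Q$ of $F$ we have $v_Q(S_2S_{j+1}) = v_Q(S_3S_j)$, and in particular the minima agree. By Lemma~\ref{lem:sum_min_val},
\begin{equation*}
\min v_Q(S_2) + \min v_Q(S_{j+1}) = \min v_Q(S_3) + \min v_Q(S_j),
\end{equation*}
which, after rearranging, is exactly \eqref{eq:delta} with $i=2$:
\begin{equation*}
-\min v_Q(S_{j+1}) + \min v_Q(S_j) = -\min v_Q(S_3) + \min v_Q(S_2).
\end{equation*}

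\medskip

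\textbf{Step 3: sum over all places.} The coefficient of a place $Q$ in the divisor $D_{S_{j+1}} - D_{S_j}$ is $-\min v_Q(S_{j+1}) + \min v_Q(S_j)$ (by Definition~\ref{def:DU}), and the coefficient of $Q$ in $D_{S_3} - D_{S_2}$ is $-\min v_Q(S_3) + \min v_Q(S_2)$. Step 2 shows these coefficients coincide for every place $Q$; all but finitely many are zero on both sides, so the sums define genuine divisors. Therefore $D_{S_{j+1}} - D_{S_j} = D_{S_3} - D_{S_2}$, as claimed. \qed

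\medskip

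I do not anticipate a serious obstacle here: the lemma is a short formal consequence of material already assembled. The only point requiring a little care is the bookkeeping at the boundary of the lattice (making sure the square $S_2S_j, S_3S_j, S_2S_{j+1}, S_3S_{j+1}$ is a legitimate instance of Figure~\ref{fig:square} for the range of $j$ in question, including when $j=2$ so that $S_2S_j = S_2^2$, and checking that Lemma~\ref{lem:vertical}'s hypothesis $i \geq 2$ is met), but this is routine.
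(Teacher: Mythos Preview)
Your proof is correct and follows exactly the approach the paper intends: the paper's own proof is the single sentence ``Applying \eqref{eq:delta} when $i=2$ yields the following useful lemma,'' and you have simply unpacked this by verifying that the square $S_2S_j,\,S_3S_j,\,S_2S_{j+1},\,S_3S_{j+1}$ has all edges of weight~$1$ (via Lemmas~\ref{lem:vertical} and~\ref{lem:Pindex}), invoking Lemma~\ref{lem:codim1}, and then translating the resulting valuation identity into a statement about divisors. The boundary case $j=2$ is indeed trivial since both sides of the claimed equality are then $D_{S_3}-D_{S_2}$.
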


We now turn to determining the genus of $F$.

\subsection{The genus of the ambient field}\label{sec:genus}

We prove first that $F=K(S)$ is in fact generated by the subspace $S_3$.

  \begin{lemma}\label{lem:dimSiSi+1}
    We have $F = K(S_3)$.
  \end{lemma}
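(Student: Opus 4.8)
The plan is to show that $S = K(S) \subseteq K(S_3)$ by induction on the filtration $S_1 \subset S_2 \subset \cdots \subset S_n = S$, establishing that each $e_i$ lies in the field $K(S_3) = K(x,y)$ (recall $e_2 = x$, $e_3 = y$). The key is to exploit that the lattice of subspaces has almost all edges of weight $1$. First I would observe that, by Lemma~\ref{lem:weight2}, at most one edge on any directed path from $S_1^2$ to $S^2$ has weight $2$, and by Lemma~\ref{lem:vertical} every vertical edge for $i \geq 2$ has weight $1$; combined with Lemma~\ref{lem:Pindex} this means all horizontal weight-$2$ edges sit in the single column $j = j_0$ (the $P$-index). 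So for any $j$ with $3 \leq j \leq n-1$ and $j \neq j_0$, the square with corners $S_2 S_j,\ S_3 S_j,\ S_2 S_{j+1},\ S_3 S_{j+1}$ has all four edges of weight $1$ (the two horizontal edges because $j, j+1$ avoid $j_0$... here one must be slightly careful about whether $j+1 = j_0$, which I address below), so Lemma~\ref{lem:codim1} gives $S_2 S_{j+1} = S_3 S_j$, i.e. $\langle 1, x\rangle S_{j+1} = \langle 1, x, y\rangle S_j$.

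Granting such an equality $\langle 1, x\rangle S_{j+1} = \langle 1, x, y\rangle S_j$, I would argue as follows. Since $e_{j+1} \in S_{j+1} \subseteq \langle 1, x\rangle S_{j+1} = \langle 1, x, y\rangle S_j$, there exist $a, b, c \in S_j$ with $e_{j+1} = a + bx + cy$; if by induction $S_j \subseteq K(x,y)$ then $e_{j+1} \in K(x,y)$, completing the induction step. The base cases $S_1 = K$, $S_2 = \langle 1, x\rangle$, $S_3 = \langle 1, x, y\rangle$ are immediate, all lying in $K(x,y) = K(S_3)$. The only indices not directly covered are those $j$ for which the relevant square touches the $P$-index column, i.e. $j = j_0 - 1$, $j = j_0$. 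For these I would instead use a neighbouring square one row up: apply Lemma~\ref{lem:codim1} to the square with corners $S_i S_{j},\ S_{i+1}S_j,\ S_i S_{j+1},\ S_{i+1}S_{j+1}$ for a suitable $i \geq 3$, where by Lemma~\ref{lem:vertical} the two vertical edges have weight $1$; one then needs one of the two horizontal edges on that square to have weight $1$ as well, which follows because a weight-$2$ horizontal edge can occur only in column $j_0$, so in row $i \geq 3$ at most one of the two horizontal edges of that square is weight $2$ — but actually I want both columns $j$ and $j+1$ handled, so the cleanest route is: since $j_0 \in \{2, n-1\}$ (as the authors anticipate, though to keep the lemma self-contained I would instead only use that there is a \emph{single} bad column), for $j_0 = 2$ the troublesome indices $j_0-1 = 1$ and $j_0 = 2$ concern only $S_1, S_2, S_3$ which are trivially in $K(S_3)$; for $j_0 \geq 3$, I treat $j = j_0 - 1$ and $j = j_0$ by climbing: use $S_{i}S_{j_0} = $ something forced by weight-$1$ verticals together with the fact that weight-$2$ horizontals only occur leaving column $j_0$ (not column $j_0 - 1$), giving $\langle 1, x\rangle S_{j_0} = \langle 1, x, y\rangle S_{j_0-1}$ from the square in rows $2,3$ and columns $j_0-1, j_0$ whose \emph{left} horizontal edges $S_2 S_{j_0-1}\!\to\! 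S_2 S_{j_0}$ and $S_3 S_{j_0-1}\!\to\! S_3 S_{j_0}$ have weight $1$; and then for column $j_0 \to j_0+1$ use a row $i \geq 3$ square whose vertical edges are weight $1$ and at least one horizontal edge is weight $1$, transferring the identity $S_i S_{j_0+1} = S_{i+1}S_{j_0}$ and peeling off generators from there.

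The main obstacle is precisely this bookkeeping around the $P$-index column: one weight-$2$ edge per path means that in the single column $j_0$ the identity $S_2 S_{j_0+1} = S_3 S_{j_0}$ of Lemma~\ref{lem:codim1} fails, and one must reroute through an adjacent square to still express $e_{j_0+1}$ as a $\langle 1, x, y\rangle$-combination of earlier basis elements. I expect this to require a short case analysis (on the location of the weight-$2$ edge within the $2 \times 2$ square in question) but no genuine computation; everything else is a clean induction driven by Lemma~\ref{lem:codim1}. Once $e_i \in K(x,y)$ for all $i$, we get $S \subseteq K(x,y) = K(S_3)$, hence $F = K(S) = K(S_3)$, and since trivially $K(S_3) \subseteq K(S) = F$, equality holds.
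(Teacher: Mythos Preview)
Your inductive scheme via Lemma~\ref{lem:codim1} works cleanly whenever $j\neq j_0$, but your handling of the single bad column $j=j_0$ has a real gap. Rerouting to a row $i\geq 3$ does not help: by Lemma~\ref{lem:Pindex} the horizontal edge $S_{i+1}S_{j_0}\to S_{i+1}S_{j_0+1}$ has weight~$2$ for \emph{every} $i+1\in\{2,\dots,j_0\}$, so the hypothesis of Lemma~\ref{lem:codim1} (that both edges into $S_{i+1}S_{j_0+1}$ have weight~$1$) is never met in that column, no matter which row you pick. Thus you never obtain an identity of the form $S_iS_{j_0+1}=S_{i+1}S_{j_0}$ from which to express $e_{j_0+1}$ in $K(S_{j_0})$. (Your worry about $j=j_0-1$ is, by contrast, harmless: only the edges leaving column $j_0$ are heavy, so that case is already covered by your main argument.) Since the fact $j_0\in\{2,n-1\}$ is proved only later and in part \emph{using} this lemma, you cannot invoke it here, as you rightly note.

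The paper sidesteps the whole $P$-index bookkeeping with a short uniform argument along the diagonal: for $i\geq 4$ one has $S_{i-1}S_i=S_{i-1}^2+S_{i-1}e_i$, and Lemma~\ref{lem:weight2} gives $\dim S_{i-1}S_i-\dim S_{i-1}^2\leq 2$, while $\dim S_{i-1}e_i=i-1\geq 3$. Hence $S_{i-1}^2\cap S_{i-1}e_i\neq 0$, so $ae_i=b$ for some nonzero $a\in S_{i-1}$, $b\in S_{i-1}^2$, giving $e_i\in K(S_{i-1})$ and $K(S_i)=K(S_{i-1})$ for every $i\geq 4$. This is both shorter and avoids the case split you are struggling with; if you want to salvage your route, the natural patch at $j=j_0$ is exactly this dimension count (observe $S_3S_{j_0}\subset S_2S_{j_0+1}$ with codimension~$1$ and note $e_{j_0+1},xe_{j_0+1}\in S_2S_{j_0+1}$ must be dependent modulo $S_3S_{j_0}$), but at that point you have essentially rediscovered the paper's argument.
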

  \begin{proof}
    For any
    $i \geq 1$, we have
    $$
    S_{i-1} S_i  = S_{i-1}^2 +  S_{i-1} e_i.
    $$
    Lemma~\ref{lem:weight2} asserts
    $\dim (S_iS_{i-1}) - \dim S_{i-1}^2 \leq 2$. Moreover, if $i \geq 4$,
    $\dim S_{i-1} \geq 3$ and hence the intersection
    $S_{i-1}^2 \cap S_{i-1}e_i$ is non-zero.
    Consequently, $e_i \in K(S_{i-1})$. Therefore 
    $F=K(S_n)=K(S_{n-1})=\cdots =K(S_3)$.
  \end{proof}

Remembering that
$e_1=1$, and that we use the notation $x=e_2$ and $y=e_3$, 
Lemma~\ref{lem:dimSiSi+1} says that $F=K(x,y)$. Our next goal
will be to determine the genus of $F$ and for this, we will identify an equation of lowest degree satisfied by $x$ and $y$: its
degree will determine the genus of $F$. 
  
\begin{prop}\label{prop:genus}
  The field $F$ has genus less than or equal to $1$.
\end{prop}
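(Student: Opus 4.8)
The goal is to bound the genus $g$ of $F = K(x,y)$ by controlling the degree of the lowest-degree polynomial relation between $x$ and $y$. The plan is to use the valuation $v_P$ (and the filtration data it provides) to read off the pole orders of $x$ and $y$ at $P$, and then to estimate, via the lattice of subspaces, the dimension of the space of monomials $x^ay^b$ that can occur with bounded pole order. First I would recall that, since $e_1 = 1, e_2 = x, e_3 = y$ is the beginning of a $P$-filtered basis, we have $v_P(x) = -\min v_P(S_2) =: -p$ and $v_P(y) = -\min v_P(S_3) =: -q$ with $0 < p < q$; moreover $D_{S_2} = pP$ or involves another place, but at $P$ the pole orders are $p$ and $q$. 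The key point is that monomials $x^a y^b$ have $v_P(x^a y^b) = -(ap+bq)$, and two such monomials with the same value $ap+bq$ become linearly dependent modulo lower-order terms only if there is a relation.

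Next I would exploit Lemma~\ref{lem:D_Si}: the divisor increments $D_{S_{i+1}} - D_{S_i}$ are all equal to $\delta := D_{S_3} - D_{S_2}$ except at the single index $j_0$. Taking degrees, $\deg D_{S_i}$ grows linearly in $i$ with a single "jump", so $\deg D_{S_n} \leq n \cdot \deg\delta + O(1)$, and since $\deg D_{S_i} \geq \deg D_{S_2} + (i-2)$ one controls $p,q$ fairly tightly — in fact one expects $q - p$ and $p$ to both be small (the generic picture being $p=1$, $q=2$, or $p=q$ with a shift). Concretely I would argue that $D_{S_3} - D_{S_2}$ has degree $1$ (this follows because consecutive $S_i$ have codimension $1$, so $\deg D_{S_{i+1}} \geq \deg D_{S_i}+1$, while the lattice forces near-equality), hence at $P$ either $q = p+1$ or $q=p$; combined with $F = K(x,y)$ and $x$ of pole order $p$ at $P$, the relation between $x$ and $y$ has low degree.

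Then I would count: the space $S^2 = S_n^2$ has dimension $2n$, it is spanned by the $n^2$ products $e_ie_j$, all of which lie in $L(D_{S^2})$ with $\deg D_{S^2} = 2\deg D_S \leq 2(n-1+\deg\delta\text{-correction})$. Comparing the lower bound $\dim L(D_{S^2}) \geq \deg D_{S^2} + 1 - g$ (Riemann–Roch, assuming the degree is $> 2g-2$, which one must check) with $\dim S^2 = 2n$ gives $g \leq \deg D_{S^2} + 1 - 2n$, and the degree estimates on $D_S$ then force $g \leq 1$. Alternatively, and perhaps more cleanly, I would directly produce $2g$ "missing" monomials: if $F = K(x,y)$ with $x,y$ of pole orders $p,q$ at a common place and $\gcd(p,q)=1$ say, the semigroup of pole orders realizable at $P$ by $K[x,y]$ has exactly $(p-1)(q-1)/2$ gaps, and these gaps must number at least $g$ (Weierstrass gaps), yet the smallness of $p,q$ forces $(p-1)(q-1)/2 \leq 1$.

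**Expected main obstacle.** The delicate step is pinning down that $\deg(D_{S_3} - D_{S_2}) = 1$ and, more importantly, that the pole orders $p = v_P(x^{-1})$ and $q = v_P(y^{-1})$ are small enough — the lattice and Lemma~\ref{lem:D_Si} give that increments are constant off $j_0$, but one still has to rule out that the constant increment $\delta$ has degree $\geq 2$, which would blow up the genus bound; this requires carefully using that $\dim S^2 = 2n$ (not larger) to squeeze the degree of $D_{S^2}$, and handling the exceptional $P$-index contribution. The other subtlety is the Riemann–Roch hypothesis $\deg D_{S^2} > 2g - 2$: if it fails, $\dim L(D_{S^2})$ could be smaller than $\deg D_{S^2} + 1 - g$, so one needs either to choose $P$ (or a later filtration step) to make the degree large, or to argue the bound holds anyway because $S^2$ already exhibits enough functions. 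I would resolve this by noting $\deg D_{S^2} \geq \deg D_{S_3} + \deg D_{S_{n-1}} \geq n$ roughly, which for $n \geq 4$ comfortably exceeds $2g-2$ once we know $g \leq 1$ — so a short bootstrap argument (first get $g$ finite and small from the monomial count, then apply Riemann–Roch cleanly) closes the loop.
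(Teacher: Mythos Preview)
Your plan does not close, and the gap is structural rather than a missing detail. Both routes you sketch --- the Riemann--Roch inequality and the Weierstrass-gap count --- require an \emph{upper} bound on the pole data of $S$ (either $\deg D_S \leq n$, or the pole orders $p = -v_P(x)$ and $q = -v_P(y)$ small) that you never actually obtain.

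Concretely: from $S^2 \subset L(2D_S)$ and Riemann--Roch you get $2n \leq \dim L(2D_S) = 2\deg D_S + 1 - g$ (under the degree hypothesis), hence $g \leq 2\deg D_S + 1 - 2n$. So to conclude $g \leq 1$ you need $\deg D_S \leq n$. But all the inequalities you have point the wrong way: $n = \dim S \leq \dim L(D_S) \leq \deg D_S + 1$ gives only $\deg D_S \geq n-1$. Lemma~\ref{lem:D_Si} tells you the increments $D_{S_{j+1}} - D_{S_j}$ are constant off $j_0$, equal to some $\delta = D_{S_3} - D_{S_2}$, but nothing in the lattice bounds $\deg\delta$: the equalities $S_{i+1}S_j = S_iS_{j+1}$ only give $D_{S_{i+1}}+D_{S_j} = D_{S_i}+D_{S_{j+1}}$, which is the constancy statement again, not a size constraint. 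And even granting $\deg\delta = 1$, you would still have $\deg D_S = \deg D_{S_2} + (n-2)$ with $\deg D_{S_2} = [F:K(x)]$, which is not a priori bounded. Your final ``bootstrap'' is openly circular: you invoke $g\leq 1$ to check the Riemann--Roch hypothesis needed to prove $g\leq 1$. The Weierstrass-gap variant has the same defect: the bound $g \leq (p-1)(q-1)/2$ would need $x^ay^b$ to lie in $L((ap+bq)P)$, i.e.\ $x,y$ to have poles only at $P$, which you do not know; and you have no independent reason for $p,q$ to be small.

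The paper avoids all of this by a direct, local computation inside the lattice. Since $S_2S_3$ is spanned by the five products $1,x,y,x^2,xy$, either $\dim S_2S_3 = 4$, giving a quadratic relation between $x$ and $y$ (so $F=K(x)$, genus $0$), or $\dim S_2S_3 = 5$, i.e.\ the $P$-index is $2$. In the latter case the lattice forces $\dim S_3^2 = 6$ (no quadratic relation among $1,x,y,x^2,xy,y^2$) and $\dim S_3S_4 = 7$; but $S_3S_4$ is spanned by the nine products $1,x,y,e_4,x^2,xy,xe_4,y^2,ye_4$, so there are two independent relations $e_4L_i(x,y) = Q_i(x,y)$ with $L_i$ linear and $Q_i$ quadratic, and eliminating $e_4$ yields a nonzero cubic $L_1Q_2 - L_2Q_1$ vanishing at $(x,y)$. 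A plane cubic has genus at most $1$. The key point is that the genus bound comes from producing an explicit low-degree \emph{relation}, which is what simultaneously bounds $[F:K(x)]$; the divisor degrees $D_{S_i}$ are worked out only afterwards (\S\ref{sec:Pindex=2}--\ref{sec:Pindex>2}), using the genus bound as input. Your plan tries to run this in reverse and cannot get started.
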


\begin{proof}
  Depending on the value of the $P$--index being greater than $2$ or
  equal to $2$, we have either $\dim S_2 S_3 = 4$ or $\dim S_2 S_3 = 5$.
Moreover, $S_2S_3$ is generated by $(1,x,x^2,xy,y)$. 

If $\dim S_2 S_3 = 4$ then we get a linear relation between
  $1,x,x^2,xy,y$ which immediately shows that $y\in K(x)$ and
  consequently  that $F=K(x)$ and has genus $0$. We note that we have
  found an irreducible  quadratic relation between $x$ and $y$,
  meaning that $F$  is the function field of a
      plane irreducible conic. 

If   $\dim S_2 S_3 = 5$, then $1,x,x^2,xy,y$ are linearly
  independent over $K$; the subspace $S_3^2$, which is generated by
  $(1, x, y, x^2, x y, y^2)$ and has dimension $6$, does also not produce
  an algebraic relation between $x$ and $y$. We need to go to
  $S_3S_4$, which is of dimension $7$ and is generated by $(1, x, y, e_4, x^2, x y, x e_4,
      y^2, y e_4)$. It  entails the existence of two
      independent relations
      \begin{eqnarray}
       && e_4 L_1(x,y) = Q_1(x, y) \label{eq:relation1}\\
       && e_4 L_2(x,y) = Q_2(x, y) \label{eq:relation2}
      \end{eqnarray}
      where $L_1, L_2$ are linear polynomials and $Q_1, Q_2$ are
      quadratic polynomials.
      Moreover, the linear polynomials $L_1, L_2$ are nonzero since 
      $\dim S_3^2 = 6$ and hence, there is no quadratic polynomial vanishing
      on $x, y$.  By eliminating  $e_4$ we get
      \begin{equation}
      L_1(x, y) Q_2 (x, y) = L_2(x, y) Q_1(x, y).
      \end{equation}
      The polynomial
      $L_1 Q_2 - L_2 Q_1$ is nonzero because the relations~(\ref{eq:relation1})
      and~(\ref{eq:relation2}) are independent.  This polynomial has degree at
      most $3$ and, since $\dim S_3^2 = 6$, there is no quadratic
      relation relating $e_1, x, y$, which asserts that the degree
      is exactly $3$. Therefore, the genus of $F$ is either $0$ (if
      the curve of equation $L_1 Q_2 - L_2 Q_1$ has a singularity) or
      $1$, as a consequence of B\'ezout's Theorem \cite{Fulton}.
\end{proof}

\noindent {\bf Summary.} Writing $S_2 = \langle 1,x\rangle$ and
$S_3 = \langle 1,x,y \rangle$, we may distinguish three cases.
\begin{enumerate}
  \item\label{item:conic} $\dim S_2S_3 = 4$. In this situation,
    $F=K(x)$ and 
    there is an irreducible quadratic polynomial $Q$ such that $Q(x,y) = 0$.
  \item\label{item:sing_cubic} $\dim S_2S_3 = 5$ and there is a cubic
    relation $$L_1(x,y) Q_2(x,y) - L_2(x,y) Q_1(x,y) = 0$$ such that the
    corresponding projective plane curve is singular.
  \item\label{item:smooth_cubic} $\dim S_2S_3 = 5$ and there is a cubic
    relation $$L_1(x,y) Q_2(x,y) - L_2(x,y) Q_1(x,y) = 0$$ such that the
    corresponding projective plane curve is smooth.
\end{enumerate}
Cases (\ref{item:conic}) and (\ref{item:sing_cubic}) correspond to
the genus $0$ case. Case (\ref{item:smooth_cubic}) correspond to the
genus $1$ case. 

\smallskip

In order to finish the proof of Theorem \ref{thm:main}, it remains to
compute the divisor $D_S$, as defined in  \ref{def:DU}. For this, we
will first determine $D_{S_2}$ and $D_{S_3}$, and then, iteratively
compute  $D_{S_i}$ for $i\leq n$. 

\subsection{The $P$-index is equal to $2$}
\label{sec:Pindex=2}

In this subsection, we treat the case of the $P$-index being equal to
$2$, which amounts to $\dim S_2S_3=5$. We have already proved that $x$
and $y$ satisfy an equation of degree $3$; in the next lemma we show that
moreover this equation has a specific form.

\begin{lemma}\label{lem:deg2}
  The field $F = K(x,y)$ has degree $2$ over $K(x)$ and the equation
  satisfied by $x$ and $y$ is of the form 
\begin{equation}\label{eq:G}
y^2+B(x)y+C(x)=0
\end{equation}
where $\deg B\leq 2$ and $\deg C\leq 3$.
\end{lemma}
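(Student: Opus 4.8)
The plan is as follows. We are in the case $\dim S_2S_3 = 5$, so by Proposition~\ref{prop:genus} (cases \ref{item:sing_cubic} and \ref{item:smooth_cubic}) there is a nonzero $G\in K[x,y]$ of total degree exactly $3$ with $G(x,y)=0$; since $\dim S_3^2=6$ there is no relation between $x$ and $y$ of total degree $\le 2$. Write $m:=-v_P(x)$ and $\ell:=-v_P(y)$; as $(e_1,e_2,e_3)=(1,x,y)$ is the start of a $P$-filtered basis, $0<m<\ell$. The first, routine, step is to pin down the shape of $G$ by comparing $P$-valuations. Among the monomials $x^iy^j$ with $i+j\le 3$, the values $v_P(y^3)=-3\ell$ and $v_P(xy^2)=-m-2\ell$ are, because $0<m<\ell$, strictly smaller than $v_P$ of every other such monomial, and each is attained by that monomial only. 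Hence if the coefficient of $y^3$ (resp.\ of $xy^2$) in $G$ were nonzero, the ultrametric inequality would force $v_P(G(x,y))$ to equal that value, hence to be finite, contradicting $G(x,y)=0$. So $G$ contains neither $y^3$ nor $xy^2$: $\deg_yG\le 2$ and the coefficient of $y^2$ in $G$ is a constant. Reading off which monomials may multiply $y^1$ (only $y,xy,x^2y$) and $y^0$ (only $1,x,x^2,x^3$) gives
\[
G = A_0\,y^2 + B(x)\,y + C(x),\qquad A_0\in K,\ \deg B\le 2,\ \deg C\le 3 .
\]

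It remains to show that $y\notin K(x)$; then $A_0\neq 0$ (otherwise $B(x)y=-C(x)$ with $B\neq 0$ would put $y$ in $K(x)$) and $y$ satisfies no linear equation over $K(x)$, so $[F:K(x)]=[K(x)(y):K(x)]=2$ and, dividing by $A_0$, one obtains exactly \eqref{eq:G}. This is the main obstacle. The idea is to contradict $\dim S^2=2n$ assuming $y\in K(x)$: then $F=K(x)$ with $x$ a rational coordinate, so $D_{S_2}=(x)_\infty=P$, and $\dim S_2S_3=5$ forces $y$ to have a pole at some place $Q\neq P$ (otherwise $y$ is a polynomial in $x$, of degree $\ell$, and $\ell\le 3$ together with a degree count on $G$ would put $y$ in $\langle 1,x,x^2\rangle$). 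A short argument gives $\ell\in\{2,3\}$, and Lemma~\ref{lem:D_Si} propagates $D_{S_{j+1}}-D_{S_j}=D_{S_3}-P$ for all $j\ge 3$; hence the whole chain $(D_{S_j})$ — and in particular $v_P(S)$ and the multiplicity of $Q$ in $D_S$, which grows linearly in $n$ — is determined by $D_{S_2}=P$ and $D_{S_3}$.

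Finally one checks that this forces $\dim S^2>2n$. When $-v_P(y)=3$ this is immediate from $v_P(S)=\{0,-1,-3,-5,\dots,-(2n-3)\}$, whose sumset already has more than $2n$ elements; when $-v_P(y)=2$ one has $v_P(S)=\{0,-1,\dots,-(n-1)\}$, for which the $P$-valuations alone do not suffice, and one must additionally use $v_Q$ (where $S^2\subseteq L(2D_S)$ is large at $Q$) to see that $\dim S^2$ cannot come down to $2n$ — this last verification is the crux of the argument. With $y\notin K(x)$ established, $A_0\neq 0$, $[F:K(x)]=2$, and the lemma follows; together with Proposition~\ref{prop:genus} this also records that in case \ref{item:sing_cubic} (resp.\ \ref{item:smooth_cubic}) $F$ has genus $0$ (resp.\ $1$).
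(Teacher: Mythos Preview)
Your reduction to the shape $G=A_0y^2+B(x)y+C(x)$ with $A_0$ constant, $\deg B\le 2$, $\deg C\le 3$, via the valuation comparison on monomials, is correct and is exactly what the paper does.

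The difference is in how you rule out $y\in K(x)$, and here your plan has a real gap. The paper's argument is short and structural: once $F=K(x)$ and $P=P_\infty$, Proposition~\ref{prop:A+A=2A} applied to $v_P(S)$ (which already contains $0,-1$) forces $v_P(y)=-2$, not merely $\ell\in\{2,3\}$. Then $S_2=L(P)$ separates $P$ from any $Q$ (Lemma~\ref{lem:L(D)sep}), hence so does $S_2S_4$ (Lemma~\ref{lem:sep_products}); but the lattice gives $S_2S_4=S_3^2$ (Lemma~\ref{lem:codim1}), so $S_3$ separates too, which forbids any pole of $y$ away from $P$. Hence $y$ is a polynomial of degree $2$, and $\dim S_2S_3\le 4$, a contradiction. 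No global computation of $D_{S_j}$ or of $\dim S^2$ is needed.

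Your alternative route---propagate $D_{S_j}$ via Lemma~\ref{lem:D_Si} and then contradict $\dim S^2=2n$ using valuations---does work for $\ell=3$ (the sumset $v_P(S)+v_P(S)$ has $3n-3>2n$ elements, since $v_P(S)=\{0,-1,-3,\ldots,-(2n-3)\}$). But in the case you yourself flag as the crux, $\ell=2$ with a single simple pole $Q$, it does \emph{not} go through as sketched: one finds $v_P(S)=\{0,-1,\ldots,-(n-1)\}$ and $v_Q(S)=\{1,0,-1,\ldots,-(n-2)\}$, both arithmetic progressions, each giving only $|v(S)+v(S)|=2n-1$. Since $S^2\subset L(2D_S)$ bounds the valuations from below, one even sees $|v_P(S^2)|=|v_Q(S^2)|=2n$ is perfectly consistent, so valuation counting at $P$ and $Q$ separately cannot produce $\dim S^2>2n$. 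To close this case one is forced back to the identity $S_2S_4=S_3^2$ (equivalently, to the separation lemmas), which is precisely the mechanism the paper uses. A minor additional point: your parenthetical claim that ``$\ell\le 3$ together with a degree count on $G$ would put $y$ in $\langle 1,x,x^2\rangle$'' is false for $\ell=3$ ($y=x^3$ already satisfies a degree-$3$ relation and gives $\dim S_2S_3=5$); this is harmless only because your sumset argument disposes of $\ell=3$ anyway.
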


\begin{proof}
  The field $F$ is generated by $x$ and $y$ and the proof of
  Proposition~\ref{prop:genus} has shown that there is a relation $G(x,y) = 0$
  of degree $3$. Suppose that $G(x,y)$ contains a term in $y^3$ and write $G(x,y)$ as
  \begin{equation*}
  G(x, y) = y^3 + A(x)y^2 + B(x)y + C(x) = 0
  \end{equation*}
 where $\deg A \leq 1$, $\deg B \leq 2$ and $\deg C \leq 3$. 
By construction, we have $v_P(y^3)<v_P(xy^2)<v_P(x^2y)<v_P(x^3)$ so
$v_P(y^3)<v_P(A(x)y^2+B(x)y+C(x))$ which is in contradiction with
$G(x,y)=0$. So the equation has the form $A(x)y^2 + B(x)y + C(x) =
0$. With a similar reasoning we can see that moreover $A(x)$
must be a constant.

It remains to rule out the case when $A(x)=0$,
which would mean that $y\in K(x)$ and  $F=K(x)$. Let us assume we are
in this case and reach a contradiction. Because $v_P(x)<0$,
$P=P_\infty$ is the place at infinity of $K(x)$ and $v_P(x)=-1$. In
particular $D_{S_2}=L(P)$. Regarding $y$, we know that $v_P(y)<v_P(x)$
so the only possibility is $v_P(y)=-2$ because of the structure of
$v_P(S)$ which contains $\{0,-1\}$: indeed, recall from Proposition~\ref{prop:A+A=2A} that it  can have a
missing element only after its first or before its last value and
since $n\geq 4$ the value before the smallest value of $v_P(S)$ cannot be 
$v_P(y)$. According to
Lemma \ref{lem:L(D)sep}, $S_2$ separates $P$ and any other place
$Q\neq P$; according
to Lemma \ref{lem:sep_products}, so does $S_2S_4$. But $S_2S_4=S_3^2$ from Lemma
\ref{lem:codim1} which entails that also $S_3$ separates $P$ and $Q$. So $y$
cannot have a pole at $Q$, which leaves the only possibility
$y=D(x)$ {for some polynomial $D$ with} $\deg(D)=2$. But this situation  is not compatible with the
condition that $\dim S_2S_3=5$.
\end{proof}

\begin{prop}
  When $\dim S_2S_3=5$, there is a place $Q$, possibly equal to $P$,
  such that, for all $i=2,\ldots,n$, 
$$D_{S_i}=(i-1)P+Q.$$
\end{prop}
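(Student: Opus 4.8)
The plan is to determine the divisor chain $(D_{S_i})$ under the hypothesis $\dim S_2S_3=5$, i.e. when the $P$-index equals $2$. By Lemma~\ref{lem:D_Si}, since the only exceptional index is $j_0=2$, the increments $D_{S_{i+1}}-D_{S_i}$ are all equal to a common divisor $\Delta:=D_{S_4}-D_{S_3}$ for every $i\geq 3$; hence it suffices to pin down $D_{S_2}$, $D_{S_3}$ and $\Delta$, and then the whole chain follows by telescoping. First I would record that $v_P(x)<0<v_P(1)$ forces $\min v_P(S_2)=v_P(x)$, and by Proposition~\ref{prop:A+A=2A} applied to $v_P(S)$ (which contains $\{0,v_P(x)\}$ and has at most one gap, necessarily after the first or before the last value) together with $n\geq 4$, the valuation $v_P(x)$ must be exactly $-1$. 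Thus $D_{S_2}=P+(\text{something supported away from }P)$; writing $D_{S_2}=P+Q'$ for an effective divisor $Q'$, the next task is to show $Q'$ is a single place of degree $1$, i.e. $Q'=Q$.

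For the second step I would analyse $D_{S_3}$. From the structure of $v_P(S)$ we get $v_P(y)=-2$ (the value of $S$ after $0$ and $-1$), so the $P$-coefficient of $D_{S_3}$ is $2$. To control the part of $D_{S_3}$ and $D_{S_2}$ away from $P$, I would use the separation lemmas: by Lemma~\ref{lem:L(D)sep} a Riemann-Roch space $L(D_{S_2})$ of degree $\deg D_{S_2}=2>2g$ (since $g\leq 1$ forces $g=0$ here in the relevant sub-case, or we argue directly) separates $P$ from every other place; more robustly, I would combine Lemma~\ref{lem:codim1} (giving $S_2S_4=S_3^2$, since the $P$-index is $2$ so the square $S_2\to S_2S_3\to S_2S_4$ has a single weight-$2$ edge at $j_0=2$ and the rest weight $1$) with Lemma~\ref{lem:sep_products} to transfer separation from $S_2$ to $S_2S_4=S_3^2$ and hence, by Lemma~\ref{lem:sep_products} again, to $S_3$. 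This shows $S_3$ separates $P$ from any $Q\neq P$, so the only place besides $P$ that can appear in $D_{S_2}$ or $D_{S_3}$ with positive coefficient is a single place $Q$, and that coefficient is at most $1$ (a higher pole at $Q$ shared by all basis elements would be absorbed by a multiplicative translate, contradicting minimality of the divisor $D_U$ of Definition~\ref{def:DU}, or contradicts $\deg D_{S_3}=\dim S_3+g-1$). Degree counting via Riemann-Roch — $\deg D_{S_2}=2$ and $\deg D_{S_3}=3$ when $g=0$, and $\deg D_{S_3}=2+g$ matched against $\dim S_3=3$ when $g=1$ — then pins down $D_{S_2}=P+Q$ and $D_{S_3}=2P+Q$, so $\Delta=P$.

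The final step is the telescoping: $D_{S_i}=D_{S_3}+(i-3)\Delta=2P+Q+(i-3)P=(i-1)P+Q$ for $i\geq 3$, and $D_{S_2}=P+Q$ matches the formula at $i=2$, completing the proposition. I expect the main obstacle to be the second step, namely rigorously ruling out that $D_{S_2}$ or $D_{S_3}$ carries extra poles at places other than $P$ and showing the coefficient at $Q$ is exactly $1$: this is where the interplay of the separation lemmas, the identity $S_2S_4=S_3^2$, the Riemann-Roch dimension counts, and the genus dichotomy ($g=0$ vs $g=1$) must be assembled carefully, and one has to keep track of the two cases separately since the degree of $D_{S_3}$ depends on $g$. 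A subtle point to watch is that $Q$ may coincide with $P$ (the statement explicitly allows this), so the arguments must be phrased so as not to implicitly assume $Q\neq P$.
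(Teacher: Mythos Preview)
Your telescoping strategy via Lemma~\ref{lem:D_Si} is sound and matches the paper's final step, but the determination of $D_{S_2}$ and $D_{S_3}$ has two genuine gaps.

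First, the claim that $v_P(x)=-1$ is false. Proposition~\ref{prop:A+A=2A} allows the missing element of $v_P(S)$ to sit immediately \emph{below} $0$, i.e.\ $v_P(S)=\{0,-2,-3,\ldots,-n\}$, which gives $v_P(x)=-2$. This is not a pathological side case: it is exactly what happens when the place at infinity of $K(x)$ is ramified in $F$, and then $D_{S_2}=2P$ (so $Q=P$ in the statement). Your argument ``$n\geq 4$ forces $v_P(x)=-1$'' confuses the two ends of the progression.

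Second, the separation step is circular. You want to show that $S_2$ separates $P$ from every other place by invoking Lemma~\ref{lem:L(D)sep} for $L(D_{S_2})$, but that lemma applies to the Riemann--Roch space, not to $S_2$, and you do not yet know that $S_2=L(D_{S_2})$; indeed you do not yet know $\deg D_{S_2}=2$, which is precisely what you are trying to establish. The transfer $S_2\to S_2S_4=S_3^2\to S_3$ via Lemma~\ref{lem:sep_products} is correct once you have separation for $S_2$, but you never get it off the ground. The Riemann--Roch dimension counts suffer from the same problem: they presuppose $S_i=L(D_{S_i})$.

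The paper avoids both issues by appealing to Lemma~\ref{lem:deg2}, which shows $[F:K(x)]=2$ and that $y$ satisfies $y^2+B(x)y+C(x)=0$ with $\deg B\leq 2$, $\deg C\leq 3$. Since $v_P(x)<0$, the place $P$ lies over the place $P_\infty$ at infinity of $K(x)$, and in a degree-$2$ extension $P_\infty$ decomposes either as $2P$ (ramified) or as $P+Q$ with $Q\neq P$ (split); this immediately yields $D_{S_2}=2P$ or $D_{S_2}=P+Q$. The equation for $y$ then shows $y$ has no poles outside $\{P,Q\}$, and viewing $F$ as an extension of $K(y)$ of degree at most $3$ bounds $v_P(y)$ and $v_Q(y)$; combined with the structure of $v_P(S)$ this forces $D_{S_3}=3P$ in the ramified case and $D_{S_3}=2P+Q$ in the split case. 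The algebraic input from Lemma~\ref{lem:deg2} is what your combinatorial outline is missing.
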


\begin{proof}
  We first focus on determining the divisors of $S_2$ and $S_3$.
  Because $v_P(x)<0$, the place $P$ is above the place at infinity of
  $K(x)$, that we will denote $P_\infty$. Since
  $K(x,y)$ has degree $2$ over $K(x)$ by Lemma~\ref{lem:deg2}, we have from
  \cite[Ch.3]{Stichtenoth} that
  $P_{\infty}$ decomposes in $F=K(x,y)$ either as $2P$ (the ramified
  case) or as $P+Q$ where $Q\neq P$ (the split case). For any other
  place $R\notin\{P, Q\}$, the valuation $v_R(x)$ is non negative, and
  we have therefore $D_{S_2}=2P$ in the ramified case and
  $D_{S_2}=P+Q$ in the split case. We now focus on determining
  $D_{S_3}$.  From \eqref{eq:G} the valuation $v_R(y)$ can only be non
  negative for any place $R \notin\{P, Q\}$, so we are
  left with determining the valuation at $P$ and $Q$ of $y$.

We now view $K(x,y)$ as an algebraic extension of $K(y)$ instead of an
extension of $K(x)$. We remark that $P$, respectively $P$ and $Q$ in the split case, are also the places above the place at infinity of $K(y)$. So, since $[K(x,y):K(y)]\leq 3$,  we also
know from \cite[Ch. 3]{Stichtenoth}  that $v_P(y)\geq -3$, and, in the split case, that $v_P(y)+v_Q(y)\geq -3$. Taking account of this, we see that in the ramified case $2P$ we necessarily have $v_P(x)=-2$ and
$v_P(y)=-3$, and so, $D_{S_2}=2P$ and $D_{S_3}=3P$.

In the split case $P+Q$, i.e. $v_P(x)=v_Q(x)=-1$, we can conclude so far that $v_P(y)=-2$ or $-3$. But the case $v_P(y)=-3$ would create
a forbidden hole in $v_P(S)$ that contains $\{v_P(1)=0,v_P(x)=-1\}$
(because not in first or last position). So the only possibility is $v_P(y)=-2$. 
Now since $v_P(y)+v_Q(y)\geq -3$ we must have $v_Q(y)\geq -1$. So
$D_{S_2}=P+Q$ and $D_{S_3}=2P+Q$.

Finally, to obtain $D_{S_i}=(i-1)P+Q$ for $i\geq 4$ we apply Lemma~\ref{lem:D_Si}.
\end{proof}

In the case when $\dim S_2S_3=5$, the above proposition concludes the proof of
Theorem \ref{thm:main}. Indeed, in the case when $F$ is of genus $1$,  Riemann-Roch theorem tells
us that $S$ must coincide with the space $L((n-1)P+Q)$. In the genus
$0$ case, $S$ is of codimension $1$ inside $L((n-1)P+Q)$.

\subsection{The $P$-index is greater than $2$, or the plane conic
  case}
\label{sec:Pindex>2}
If $S_2 S_3$ has dimension $4$, then recall that $F=K(x)$ and that Lemma~\ref{lem:vosper} implies
that $S_i$ is generated by $(1,x,\dots,x^{i-1})$ for every $i$, $2\leq i\leq k$, where $k$ is
the $P$-index of $S$; in other words, in this range, $S_i =L((i-1)P)$
where $P=P_{\infty}$ is the place at infinity of $K(x)$.

\begin{lemma}\label{lem:kP+Q}
  We have $D_{S_{k+1}} = kP+Q$
for some place $Q$ possibly equal to $P$.
\end{lemma}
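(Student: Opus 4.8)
The plan is to stay entirely inside $F=K(x)$ with $P=P_\infty$ and to read the claim as a statement about the poles of $e_{k+1}$. Recall we already know $S_i=\langle 1,x,\dots,x^{i-1}\rangle=L((i-1)P)$ for $i\leq k$; in particular $S_k^2=L((2k-2)P)$ is the space of polynomials of degree $\leq 2k-2$, of dimension $2k-1$, and $\dim S_k e_{k+1}=\dim S_k=k$. Write $m:=-v_P(e_{k+1})$ for the pole order of $e_{k+1}$ at $P$, and let $c(x)\in K[x]$ be the monic polynomial recording the poles of $e_{k+1}$ at the other places $P_\alpha$ ($\alpha\in K$) together with their orders. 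Then $D_{S_{k+1}}$ has $P$-coefficient $m$ and its part away from $P$ is the divisor of zeros of $c$, of degree $\deg c$; so proving $D_{S_{k+1}}=kP+Q$ (with $Q$ possibly equal to $P$) amounts to proving that $(m,\deg c)$ is either $(k,1)$ or $(k+1,0)$.

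I would first note $m\geq k$, since $v_P(e_{k+1})=\min v_P(S_{k+1})<\min v_P(S_k)=-(k-1)$. The crux is then to evaluate $\dim\bigl(S_k^2\cap S_k e_{k+1}\bigr)$ in two ways. On one hand $S_k^2+S_k e_{k+1}=S_kS_{k+1}$, which by the defining property of the $P$-index at row $i=k$ (Lemma~\ref{lem:Pindex}) has dimension $\dim S_k^2+2=2k+1$; inclusion--exclusion then gives $\dim(S_k^2\cap S_ke_{k+1})=k-2$. It is essential here to use the row $i=k$: the row $i=2$ gives a condition that turns out to be automatically satisfied and yields nothing. On the other hand, an element $a(x)e_{k+1}$ of $S_ke_{k+1}$ (with $\deg a\leq k-1$) lies in $S_k^2$, i.e. is a polynomial of degree $\leq 2k-2$, exactly when $c\mid a$ and, writing $a=c\tilde a$, when $\deg\tilde a\leq 2k-2-m-\deg c$; since $m\geq k$ this last inequality already forces $\deg a\leq k-1$, so the intersection is isomorphic, via the injective map $\tilde a\mapsto c\tilde a\, e_{k+1}$, to a space of polynomials of dimension $\max(0,\,2k-1-m-\deg c)$.

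Comparing the two values, and using $k\geq 3$ (which holds because $\dim S_2S_3=4$ forces the $P$-index to exceed $2$, so that $k-2\geq 1$), gives $m+\deg c=k+1$; with $m\geq k$ this leaves only $(m,\deg c)=(k,1)$ or $(m,\deg c)=(k+1,0)$. In the first case $c=x-\alpha$ for a single $\alpha\in K$ and $D_{S_{k+1}}=kP+P_\alpha$ with $P_\alpha\neq P$; in the second $e_{k+1}$ has no affine pole and $D_{S_{k+1}}=(k+1)P=kP+P$, so $Q=P$. Either way $D_{S_{k+1}}=kP+Q$ for a place $Q$ possibly equal to $P$, completing the proof. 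The one delicate point is the pole-order bookkeeping in the second dimension count: one must check that when $c\mid a$ the function $a e_{k+1}$ is a polynomial of degree exactly $\deg a+m$, so that the constraint coming from $S_k^2$ is the binding one and dominates the a priori constraint $\deg a\leq k-1$; granting $m\geq k$ this is routine.
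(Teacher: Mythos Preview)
Your proof is correct and takes a genuinely different route from the paper's. The paper first uses the additive structure of $v_P(S)$ (via Proposition~\ref{prop:A+A=2A}) to pin down $m:=-v_P(e_{k+1})\in\{k,k+1\}$, and then argues separately in each case inside $S_3S_{k+1}$: when $m=k+1$ it exhibits a basis of the polynomial part of $S_3S_{k+1}$ and reads off that $e_{k+1}$ must be a polynomial; when $m=k$ it isolates a codimension-one subspace $U\subset S_3S_{k+1}$ and, by writing both $z$ and $(x-\alpha)z$ in terms of a basis of $U$, shows $z$ has a single simple finite pole. Your argument instead works at row $i=k$ and packages everything into a single dimension count: writing $e_{k+1}=r(x)/c(x)$ in lowest terms, the inclusion--exclusion identity $\dim(S_k^2\cap S_ke_{k+1})=k-2$ together with the divisibility description of that intersection yields the exact relation $m+\deg c=k+1$, from which both cases drop out at once. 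This is cleaner and avoids the case split; it also does not need to invoke the Freiman-type structure of $v_P(S)$, since $m\geq k$ follows directly from $v_P(e_{k+1})<\min v_P(S_k)$. The paper's approach, by contrast, is more hands-on and stays closer to the filtered-basis machinery used elsewhere. One small quibble: your aside that the row $i=2$ ``yields nothing'' is not quite accurate (it does exclude $(m,\deg c)=(k,0)$), but this is inessential since the row $i=k$ already gives the full equality.
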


\begin{proof}
 Let $z$ be such that $S_{k+1}=S_k+Kz$ with $z$ of minimum
  $P$-valuation in $S_{k+1}$. We already know that $v_P(S_{k+1})$ is
  either an arithmetic progression or an arithmetic progression with a
  missing element, in other words either $v_P(z)=-k$ or
  $v_P(z)=-(k+1)$.

  Consider first the case $v_P(z)=-(k+1)$. Consider the product
  $S_3S_{k+1}$ which must be of dimension $(k+1)+3$. The space $S_3$
  is generated by $1,x,x^2$ and we have
  $v_P(S_3S_{k+1})\supset\{v_P(x^k), v_P(z), v_P(xz),v_P(x^2z)\}$ so that 
  $$v_P(S_3S_{k+1})=\{0,-1,-2,\ldots,-(k-1),-k,-(k+1),-(k+2),-(k+3)\}.$$
Since $x^k=xx^{k-1}$ and $x^{k+1}=x^2x^{k-1}$ are contained in $S_3S_{k+1}$,
we have that $S_3S_{k+1}$ contains the subspace generated by the
geometric progression $1,x,x^2,\ldots ,x^{k+1}$, which is equal to
the subspace of $S_3S_{k+1}$ of functions of $P$-valuation $\geq
-(k+1)$, because this must be a space of dimension $k+2=\dim
S_3S_{k+1}-2$. 
Since $v_P(z)=-(k+1)$, the function $z$ must belong to the
aforementioned subspace, meaning that $z$
is a polynomial in $x$ of degree $k+1$,
in other words $S_{k+1}\subset L((k+1)P)$.

Consider now the remaining case $v_P(z)=-k$. The set $v_P(S_{k+1})$ is
now the arithmetic progression $\{0,-1,\ldots, -k\}$ and we have
$D_{S_{k+1}}= kP+D$ for some positive divisor $D$. 
Consider again the product $S_3S_{k+1}$ and let $U$
be the subspace of those elements of $S_3S_{k+1}$ that have a
valuation { at $P$}
greater than the minimum, namely $-k-2=v_P(z)+v_P(x^2)$. 
We have that $U$ has codimension $1$
in $S_3S_{k+1}$ (Proposition~\ref{prop:filtered_basis}) i.e., $\dim
U=k+3$. Note also that $U$ contains 
$1,x,\ldots,x^{k+1}$, so that there exists $u\in U$ of positive
$P$-valuation, such that $u,1,x,\ldots,x^{k+1}$ is a basis of~$U$.
Now since $v_P(z)=-k$, we have $z\in U$ and 
\begin{equation}
  \label{eq:pole}
  z = P_{k}(x)+\lambda u, \quad \lambda\in K
\end{equation}
with $P_{k}(x)$ a polynomial in $x$ of degree at most $k$.
Note that we must have $\lambda~\neq~0$ otherwise, since $v_P(z)=-k$,
$z$ is a polynomial of degree $k$ in $x$ contradicting that 
$\dim S_3S_{k+1}=k+4$. The equality \eqref{eq:pole} implies therefore that
$\alpha\in K$ is a pole of $z$ if and only if it is a pole of $u$.
For such a pole $\alpha$, we have $(x-\alpha)z\in U$ since
$v_P((x-\alpha)z)=-(k+1)$, hence
 $$(x-\alpha)z = Q_{k+1}(x) + \mu u, \quad \mu\in K$$
for $Q_{k+1}(x)$ a polynomial in $x$ of degree at most $k+1$.
This implies that $\mu=0$ otherwise the left hand side and the right
hand side would not have the same $\alpha$-valuation. This
proves that $z$ has a pole of order $1$ at $\alpha$ and
simultaneously that $z$ cannot have a pole at $\beta$ for $\beta\neq \alpha$.
Therefore $z$ has a single pole of order $1$ besides $P_\infty$.
\end{proof}

We conclude with the following statement.

\begin{prop}
  The $P$--index $k$ of $S$ equals $n-1$.
\end{prop}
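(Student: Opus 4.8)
The plan is to argue by contradiction: suppose the $P$-index $k$ satisfies $k\leq n-2$, so that $k+2\leq n$ and hence $S_{k+1}$, $S_{k+2}$ exist and the products $S_{k+1}^2$, $S_kS_{k+2}$, $S_{k+1}S_{k+2}$ all lie inside $S^2$. Recall that we are in the plane conic case, so $F=K(x)$, $P=P_\infty$, the $P$-index satisfies $k\geq 3$, and $S_i=L((i-1)P)$ for $2\leq i\leq k$ by Lemma~\ref{lem:vosper}.

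First I would zoom in on the square of the lattice of Figure~\ref{fig:subspaces} with corners $S_kS_{k+1}$, $S_{k+1}^2$, $S_kS_{k+2}$ and $S_{k+1}S_{k+2}$, and check that all four of its edges have weight $1$: the two horizontal edges sit at column index $j=k+1\neq k$, so they are not among the weight-$2$ edges singled out by Lemma~\ref{lem:Pindex}, and the two vertical edges have row index $k\geq 2$, so they have weight $1$ by Lemma~\ref{lem:vertical}. Applying Lemma~\ref{lem:codim1} to this square then yields the key identity $S_{k+1}^2=S_kS_{k+2}$.

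Next I would compute the divisor of $S_{k+1}^2$ in two ways. Combining Lemma~\ref{lem:sum_min_val} with Definition~\ref{def:DU} gives $D_{UV}=D_U+D_V$ for any finite-dimensional $K$-subspaces $U,V$ of $F$, so that $2D_{S_{k+1}}=D_{S_{k+1}^2}=D_{S_kS_{k+2}}=D_{S_k}+D_{S_{k+2}}$. I would then feed in the divisors already at hand: $D_{S_k}=(k-1)P$ because $S_k=L((k-1)P)$; $D_{S_{k+1}}=kP+Q$ for some place $Q$ (possibly $Q=P$) by Lemma~\ref{lem:kP+Q}; and $D_{S_{k+2}}=D_{S_{k+1}}+(D_{S_3}-D_{S_2})=kP+Q+P=(k+1)P+Q$, using Lemma~\ref{lem:D_Si} at the index $j=k+1\neq k$ together with $D_{S_2}=P$ and $D_{S_3}=2P$. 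Substituting yields $2(kP+Q)=(k-1)P+(k+1)P+Q=2kP+Q$, hence $Q=0$ as a divisor, which is absurd since a place is a nonzero effective divisor. This contradiction forces $k\geq n-1$; and since the $P$-index is by definition an index $j_0$ with $j_0+1\leq n$, we conclude $k=n-1$.

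The argument is short once one spots that the equality $S_{k+1}^2=S_kS_{k+2}$ turns into the rigid divisor relation $2D_{S_{k+1}}=D_{S_k}+D_{S_{k+2}}$ that the earlier-computed divisors cannot satisfy, so the only delicate point is bookkeeping: one must verify that every edge of the chosen square genuinely has weight $1$ (which uses that the $P$-index is exactly $k$, making $j=k+1$ a ``weight-$1$ column'', and that the relevant row index $k$ is $\geq 2$), and that the auxiliary data $D_{S_2}=P$, $D_{S_3}=2P$ and the applicability of Lemma~\ref{lem:D_Si} at $j=k+1$ are all legitimate — everything resting on $k\geq 3$ in the plane conic case. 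One could alternatively dispose of the sub-case $Q=P$ separately by observing that otherwise $v_P(S)$ would have an interior hole at $-k$, contradicting Proposition~\ref{prop:A+A=2A}, but the uniform divisor computation above sidesteps this.
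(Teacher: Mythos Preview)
Your argument is correct, and it takes a genuinely different route from the paper after the common step $S_{k+1}^2=S_kS_{k+2}$. The paper splits into two cases: when $Q=P$ it invokes Proposition~\ref{prop:A+A=2A} to see that the resulting interior hole in $v_P(S)$ at $-k$ forces $k=n-1$; when $Q\neq P$ it uses the separation machinery, arguing that $S_k=L((k-1)P)$ separates $P$ and $Q$ (Lemma~\ref{lem:L(D)sep}) and hence so does $S_kS_{k+2}$ (Lemma~\ref{lem:sep_products}), while $S_{k+1}$ does not separate $P$ and $Q$ and hence neither does $S_{k+1}^2$, contradicting $S_{k+1}^2=S_kS_{k+2}$. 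Your approach bypasses separation entirely: from $S_{k+1}^2=S_kS_{k+2}$ you pass directly to the divisor identity $2D_{S_{k+1}}=D_{S_k}+D_{S_{k+2}}$ via Lemma~\ref{lem:sum_min_val}, and the explicit values $(k-1)P$, $kP+Q$, $(k+1)P+Q$ collapse this to $Q=0$. This is more economical and handles both $Q=P$ and $Q\neq P$ at once; the paper's route, in exchange, showcases why the separation lemmas were developed. Your bookkeeping (weight-$1$ edges at column $k+1$ and row $k\geq 2$, applicability of Lemma~\ref{lem:D_Si} at $j=k+1$, and $D_{S_2}=P$, $D_{S_3}=2P$ from $k\geq 3$) is all in order.
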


\begin{proof}
  In the case when $Q=P$ in Lemma~\ref{lem:kP+Q}, since the set of
  $P$-valuations can only be an arithmetic progression with a hole in
  the last position, we must have $k=n-1$. We may therefore suppose
  that $Q\neq P$. 

  Suppose towards a contradiction that $k \leq n-2$.
  The space $S_k$ is a Riemann-Roch space. Hence,
  from Lemma~\ref{lem:L(D)sep}, $S_k$ separates $P$ with any place
  $Q\neq P$ 
  of $F$. Therefore, from Lemma~\ref{lem:sep_products} 
  so does $S_kS_{k+2}$. On the other hand $S_{k+1}$ does not separate
  $P$ and $Q$ and hence, again applying Lemma~\ref{lem:sep_products}, 
  $S_{k+1}^2$ does not separate them either. 
  This is a contradiction since, from Lemma~\ref{lem:D_Si}, 
  $S_{k}S_{k+2}$ should be equal to $S_{k+1}^2$.
\end{proof}

As a conclusion, in this situation, $S$ is a subspace of 
codimension $1$ of a Riemann-Roch space of the form $L((n-1)P+Q)$
where $Q$ is a place, possibly equal to $P$.

\section{Further description of spaces with genus $0$ and
  combinatorial genus $1$}\label{sec:genus0}

Theorem~\ref{thm:main} gives a complete characterisation of spaces $S$ of
genus $1$ with combinatorial genus $\gamma=1$ by saying that they are
exactly Riemann-Roch spaces. However, in the case when the genus of the
field $F$ is $0$, it only says that $\gamma=1$ implies that $S$ is of
codimension~$1$ inside a Riemann-Roch space: but not all subspaces of
codimension $1$ inside an $L(D)$ space have combinatorial genus $1$,
so this raises the question of exactly which subspaces have
$\gamma=1$. The following theorem gives a precise answer.

\begin{thm}\label{thm:g=0}
  Let $S$ be of genus $g=0$ and of combinatorial genus $\gamma=1$. Then, up to multiplication by a constant, 
$S$ has a basis
of one of the following two types:
\begin{itemize}
\item[(i)]
$1,t,t^2,\ldots ,t^{n-2},(t+\alpha)t^{n-1}$, 
\item[(ii)]
$1,(t+\alpha)t,(t+\alpha)t^2,\ldots , (t+\alpha)t^{n-2}, (t+\alpha)t^{n-1}$
\end{itemize}
for some function $t$ and some constant $\alpha\in K$.
\end{thm}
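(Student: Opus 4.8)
The plan is to start from Theorem~\ref{thm:main}, which tells us that a genus-$0$ space $S$ with $\gamma=1$ sits as a codimension-$1$ subspace of a Riemann-Roch space $L(D)$ with $\deg D = n$ on a genus-$0$ function field $F = K(t)$. After multiplying by a constant and choosing a coordinate $t$ (a generator of $F$ with a pole exactly at the relevant place), I would normalise so that $D = nP_\infty$, hence $L(D) = \langle 1,t,t^2,\ldots,t^n\rangle$, and $S$ is a hyperplane inside this $(n+1)$-dimensional space. The task then reduces to a purely linear-algebraic classification: which hyperplanes $S \subset \langle 1,\ldots,t^n\rangle$ containing $1$ satisfy $\dim S^2 = 2n$? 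I would fix a $P_\infty$-filtered basis $e_1 = 1, e_2, \ldots, e_n$ of $S$ and feed it into the subspace-lattice machinery of Section~\ref{sec:products}, using the $P_\infty$-index $j_0$ introduced in Lemma~\ref{lem:Pindex}. The two cases $j_0 = 2$ and $j_0 = n-1$ (which by the proposition at the end of Section~\ref{sec:Pindex>2} are the only ones) will yield the two basis shapes (i) and (ii) respectively.

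Concretely, in the case $j_0 = n-1$ I would invoke Lemma~\ref{lem:vosper}: all edges on paths from $S_1^2$ up to $S_{n-1}^2$ have weight $1$, so $S_{n-1} = \langle 1,t,t^2,\ldots,t^{n-2}\rangle$ for a suitable choice of $t = e_2$. Only the last generator $e_n$ is unconstrained; since $S \subset L(nP_\infty)$ and $v_{P_\infty}(e_n) = -n$ (because $v_{P_\infty}(S)$ is an arithmetic progression with the hole forced into the last position, by Proposition~\ref{prop:A+A=2A}), $e_n$ is a degree-$n$ polynomial in $t$, which modulo $S_{n-1}$ we may take to be $(t+\alpha)t^{n-1}$ for a single constant $\alpha$ (the coefficient of $t^{n-1}$ being the only surviving degree of freedom after subtracting lower-degree terms). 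This gives form (i); one then checks, using $j_0 = n-1$, that any $\alpha$ is in fact admissible, or rather that the resulting space genuinely has $\gamma = 1$ — but this direction is easy since $S$ is visibly codimension $1$ in $L(nP_\infty)$ and the lattice computation shows $\dim S^2 = 2n$. In the case $j_0 = 2$, I would instead use Lemma~\ref{lem:D_Si}: the divisor increments $D_{S_{j+1}} - D_{S_j}$ are all equal to $D_{S_3} - D_{S_2}$ except at $j = j_0 = 2$, and combined with the split/ramified analysis of Section~\ref{sec:Pindex=2} (here in genus $0$, so $D_{S_i} = (i-1)P_\infty + Q$ for a place $Q = $ the zero of $t + \alpha$), one reads off that $S_i = (t+\alpha)\langle t,\ldots,t^{i-1}\rangle$ for $i \geq 2$ while $S_1 = K$, giving form (ii).

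The main obstacle I anticipate is twofold. First, handling the boundary/degenerate behaviour at $n = 4$ cleanly: with $n = 4$ the indices $j_0 = 2$ and $j_0 = n-1 = 3$ are adjacent, several of the lattice lemmas are being applied at the edge of their range of validity (e.g. Lemma~\ref{lem:dimSiSi+1} needs $i \geq 4$), and one must make sure the two families (i) and (ii) are stated so as to capture exactly the $n=4$ spaces without spurious overlap. Second, the normalisation step — choosing $t$ and $\alpha$ so that the basis literally has the claimed shape rather than merely being row-equivalent to it — requires care: one must absorb the freedom to replace $t$ by $at + b$ and to rescale, and verify that after doing so the single remaining parameter is genuinely an $\alpha \in K$ and not, say, two parameters. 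I expect this to be routine but slightly fiddly; the conceptual content is entirely carried by the $P$-index dichotomy and Lemmas~\ref{lem:vosper} and~\ref{lem:D_Si}, which are already in hand.
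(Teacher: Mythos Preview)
Your plan for the case $j_0 = n-1$ is essentially correct and matches the paper's treatment of its ``case~1''. However, there are two genuine gaps elsewhere.

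First, the normalisation step is not as routine as you suggest. From Theorem~\ref{thm:main} you get $S$ of codimension $1$ in some $L(D)$ with $\deg D = n$; since $g=0$ every such $D$ is linearly equivalent to $nP_\infty$, so you can multiply $S$ into $L(nP_\infty)$ --- but that multiplication will in general destroy the condition $1\in S$. Achieving simultaneously $1\in S$ \emph{and} $S\subset L(nP)$ for a single place $P$ is the content of the paper's Proposition~\ref{prop:goodP}, which is proved separately and genuinely uses the algebraic closedness of $K$ (one must locate a root of a certain degree-$n$ polynomial to find the right place). Without this, you cannot assume your hyperplane in $\langle 1,t,\dots,t^n\rangle$ contains $1$, and the filtration machinery does not get off the ground. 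Your description of the obstacle as ``absorbing the freedom to replace $t$ by $at+b$'' misses this point entirely.

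Second, and more seriously, your treatment of the case $j_0=2$ does not work. Once the normalisation above is in place, every $S_i$ consists of polynomials in $t$, so $D_{S_i}=iP_\infty$; in particular the auxiliary place $Q$ of Section~\ref{sec:Pindex=2} is $P_\infty$ itself, not the zero of $t+\alpha$ as you write. The only information Lemma~\ref{lem:D_Si} then delivers is that $S_i$ sits with codimension $1$ inside $L(iP_\infty)=\langle 1,t,\dots,t^i\rangle$, which is far too coarse to determine the basis --- it gives no reason why the non-constant basis elements should share the common quadratic factor $(t+\alpha)t$. The paper extracts this by a different argument, not via divisors: after writing $S=\langle 1,p_2,\dots,p_n\rangle$ with $\deg p_i=i$, it uses the fact that degree $1$ is absent from $d(S^2)$ to bound $\dim T_k\le k+1$, where $T_k\subset S^2$ is the subspace of polynomials of degree at most $k+1$; an explicit basis of $T_k$ then forces, by induction on $k$, each $p_{k+1}$ to be divisible by $(t+\alpha)t$. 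That inductive step is the key idea missing from your proposal.
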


Before proving Theorem~\ref{thm:g=0} we introduce an intermediate
result. The proof of Theorem~\ref{thm:main} has shown that $S$
(after replacing it by a suitable multiplicative translate $s^{-1}S$) 
is such that $1\in S$ and $S\subset L((n-1)P+Q)$ where $P$ is the
initial arbitrary place of $F=K(S)$ and $Q$ is some place that may or may
not be equal to $P$. The following proposition states that there
always is a choice of $P$ for which we have $Q=P$.

\begin{prop}\label{prop:goodP}
  There exists a place $P$ and a function $s\in S$ such that
  $s^{-1}S\subset L(nP)$.
\end{prop}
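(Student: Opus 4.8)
The plan is to start from the conclusion of Theorem~\ref{thm:main}: after replacing $S$ by a suitable translate we have $1\in S$, $F=K(S)$ has genus $0$, and $S\subset L((n-1)P+Q)$ for the chosen arbitrary place $P$ and some place $Q$. If $Q=P$ we are done, so assume $Q\neq P$. Since $F$ has genus $0$, we may write $F=K(t)$ for a suitable rational function $t$; the freedom in choosing $t$ (post-composition by a Möbius transformation of $\mathbf{P}^1$) is exactly what we will exploit. The idea is that the divisor $(n-1)P+Q$ is linearly equivalent to $nP'$ for a well-chosen place $P'$: on a genus $0$ curve all divisors of the same degree are linearly equivalent, so there is a function $h\in F^\times$ with $(h)+(n-1)P+Q=nP'$ for \emph{any} place $P'$ we like, and in particular $S\subset L((n-1)P+Q)=h^{-1}L(nP')$, giving $hS\subset L(nP')$.

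The subtlety is that we need $h\in S$, i.e. that the multiplicative translate is by an element of $S$ itself, and we need $1\in hS$, i.e. $h^{-1}\in S$. So the step I expect to be the main obstacle is choosing $P'$ (equivalently choosing the equivalence) so that the translating function lies in $S$. Here is how I would handle it. Take $P'$ to be the place $Q$ itself: then $(n-1)P+Q\sim nQ$, and I claim the translating function can be taken in $S$. Concretely, pick any $s\in S$ whose divisor of poles is exactly $(n-1)P+Q$ — such an $s$ exists because $D_S=(n-1)P+Q$ by the proof of Theorem~\ref{thm:main}, and by Proposition~\ref{prop:filtered_basis} applied at both $P$ and $Q$ one can find $s\in S$ attaining the minimal valuation simultaneously at $P$ and $Q$ (or argue via separation: $S$ separates $P$ and $Q$ unless $S_{n-1}$ fails to, and one checks the relevant element $e_n$ has the full pole divisor). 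Then $(s)=E-(n-1)P-Q$ where $E\geq 0$ has degree $n$; hence $(s^{-1})+ (n-1)P+Q = E\geq 0$, but more usefully $s^{-1}S\subset s^{-1}L((n-1)P+Q)=L((s)+(n-1)P+Q)=L(E)$, and $E$ is an effective divisor of degree $n$ on a genus $0$ curve, so $E\sim nP''$ for the single place $P''$ in the support of $E$ if $E=nP''$, or more generally $L(E)\subseteq L(nP'')$ once we move $E$ by another equivalence — and since $1=s^{-1}s\in s^{-1}S$ already, we just need $E$ itself to be of the form $nP''$.

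To force $E=nP''$: among all $s\in S$ with pole divisor $(n-1)P+Q$, the divisor $E=(s)+(n-1)P+Q$ of zeros varies over an effective divisor class of degree $n$; I would argue that we may pick $s$ (still in $S$) so that $E$ is supported on a single place. This uses that $L((n-1)P+Q)$ has dimension $n$ and the map sending $s$ to its zero divisor is, up to the projective structure, surjective onto the complete linear system $|(n-1)P+Q|$, which on $\mathbf{P}^1$ consists of \emph{all} effective divisors of degree $n$ — in particular it contains $nP''$ for every place $P''$. Choosing such an $s$ and setting $t$ to be a local parameter so that $E=nP''$ becomes $n\cdot(\text{zero of }t)$, we get $s^{-1}S\subset L(nP'')$, which is the desired statement with the roles relabelled ($P\leftarrow P''$, and the function is $s\in S$). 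Thus the proposition follows, the key input being the genus $0$ fact that every effective divisor of degree $n$ is in the complete linear system of $(n-1)P+Q$, combined with the dimension count $\dim L((n-1)P+Q)=n=\dim S$ which shows $S$ meets every such system nontrivially.
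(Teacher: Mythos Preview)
Your core idea is right and matches the paper's: translate $S$ by an element $s\in S$ whose zero divisor (as a member of $L((n-1)P+Q)$) is of the form $nP''$ for a single place $P''$, so that $s^{-1}S\subset L(nP'')$. But your justification for the existence of such an $s$ has a genuine gap.

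The dimension claim $\dim L((n-1)P+Q)=n=\dim S$ is wrong: in genus $0$, Riemann--Roch gives $\dim L((n-1)P+Q)=\deg((n-1)P+Q)+1=n+1$, so $S$ sits as a \emph{codimension one} subspace, exactly as Theorem~\ref{thm:main} states. Consequently the map ``$s\mapsto$ zero divisor'' restricted to $S$ does \emph{not} surject onto the complete linear system $|(n-1)P+Q|\cong\mathbf{P}^n$; its image is only a hyperplane section. So you cannot simply pick $s\in S$ with zero divisor $nP''$ by appealing to completeness of the linear system, and the sentence ``the dimension count \ldots shows $S$ meets every such system nontrivially'' does not establish anything. (The earlier detour through $P'=Q$ and separation does not help either and can be dropped.)

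What remains to prove is that the hyperplane $\mathbf{P}(S)\subset\mathbf{P}^n$ meets the locus $\{\,nP'' : P''\in\mathbf{P}^1\,\}$. This locus is the degree-$n$ rational normal curve (the image of the $n$-th Veronese), and a hyperplane meets it in $n$ points with multiplicity; this is the missing step, and it genuinely uses that $K$ is algebraically closed. The paper carries out precisely this computation in coordinates: writing $S$ as the hyperplane $\sum\lambda_ia_i=0$ inside the Laurent polynomials $a_{-1}t^{-1}+a_0+\cdots+a_{n-1}t^{n-1}$, the condition ``$(t-a)^n/t\in S$'' becomes a degree-$n$ polynomial equation in $a$ (with nonzero leading coefficient $\lambda_{-1}$ when the case $\lambda_{-1}=0$ has been handled separately), whose roots give the desired places $P''=P_a$. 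Your geometric phrasing and the paper's explicit one are the same argument; you just need to actually supply it rather than the incorrect dimension equality.
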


\begin{proof}
  We start with an arbitrary choice of $P$ so that we may suppose
   $1\in S$ and $S\subset L((n-1)P+Q)$ with $Q\neq P$. Since the
   action of $\mathrm{PGL}(2,K)$ on places is $3$-transitive, we may
   choose a function $t$ for which $F=K(t)$ and such that $P$ and $Q$,
   viewed over $K(t)$, are the place at infinity and the place at zero
   respectively. In other words $L((n-1)P+Q)$ is the space of Laurent
   polynomials of the form
   \begin{equation}
     \label{eq:laurent}
     f(t)=\frac {a_{-1}} t + a_0 + a_1  t + a_2 t^2 + \cdots + a_{n-1}
   t^{n-1}.
   \end{equation}
   Since $S$ has codimension $1$ inside $L((n-1)P+Q)$, there exist
   coefficients $\lambda_{-1},\lambda_0,\ldots,\lambda_{n-1}$ in $K$,
   such that $S$ consists of the space of functions \eqref{eq:laurent}
   satisfying 
   \begin{equation}
     \label{eq:lambda}
     \lambda_{-1}a_{-1}+\lambda_0a_0+\cdots + \lambda_{n-1}a_{n-1} = 0.
   \end{equation}
    If $\lambda_{-1}=0$ then $\frac{1}{t}\in S$ so that $1\in tS$ and
   $tS\subset L(nP)$ and we are finished.
   Suppose therefore $\lambda_{-1}\neq 0$. We claim there exists $a\in
   K$ such that the function $(t-a)^n\in tS$. Indeed, expanding the
   expression $(t-a)^n$ as
   $$(t-a)^n=a_{-1}+a_0t+\cdots +a_{n-1}t^n$$
   we see that the quantity $\lambda_{-1}a_{-1}+\cdots +
   \lambda_{n-1}a_{n-1}$ is a polynomial in $a$ of degree exactly $n$,
   which has roots in $K$
   since $K$ is algebraically closed. For such an $a$ we get that
   \eqref{eq:lambda} is satisfied. Now since $tS$ consists only of
   polynomials in $t$, equivalently in $t-a$, we have that the space 
   $\frac 1{(t-a)^n}tS$ contains $1$ and is included in $L(nP_a)$
   where $P_a$ is the place at $a$.
\end{proof}

\begin{proof}[Proof of Theorem~\ref{thm:g=0}]
  Applying Proposition~\ref{prop:goodP}, we may suppose $1\in S\subset
  L(nP)$ and, without loss of generality, that $P$ is the place at
  infinity over $K(t)$: in other words, $S$ consists of a space of
  polynomials, of degree at most $n$, and including constants.
  The space $S$ must contain a polynomial of degree $n$, otherwise,
  because $\dim S=n$, $S$ would be equal to the space $L((n-1)P)$ and
  we would have $\dim S^2=2\dim S-1$, contradicting $\gamma=1$.
  Since $S$ contains constants we have that the set of degrees $d(S)$
  of the elements of $S$
  is included in the arithmetic progression $\{0,1,\ldots ,n\}$, and
  since we may find at most $2n$ different degrees in $S^2$,
  Proposition~\ref{prop:A+A=2A} implies that 
  \TabPositions{15mm}
  \begin{enumerate}
  \item either \tab $d(S)=\{0,1,2,\ldots n-3,n-2,n\}$,
  \item or    \tab $d(S)=\{0,2,3,\ldots n-2,n-1,n\}$.
  \end{enumerate}
  In case 1, we have that $S$ contains as a subspace the space of all
  polynomials of degree at most $n-2$, and also a polynomial of degree
  $n$. This gives the existence of the basis of type $(i)$ mentioned
  by the theorem. 
  It remains to deal with case 2 for which there exists a basis of $S$
  of the form 
  $$1,p_2,p_3,\ldots ,p_n$$
where $p_i$ is a polynomial of degree $i$ in the variable $t$.
Consider the sequence of subspaces 
$$S_1=K\subset S_2\subset\cdots
\subset S_n=S$$
where $S_i= S_{i-1}+Kp_i$ for $i\geq 2$.
We shall prove by induction on $k$ that for $k=3,4,\ldots ,n$, the space
$S_k$ has, possibly after changing the variable $t$, a basis of the
form $1,(t+\alpha)t,\ldots, (t+\alpha)t^{k-1}$, yielding the desired
basis of $S$ for $k=n$.
Write the Euclidean division of $p_3$ by $p_2$, 
$$p_3=(t+a)p_2 + bt+c$$
where we have set the leading coefficients of $p_2$ and $p_3$ equal to
$1$. By replacing if needed be $p_2$ by $p_2+b$ and $p_3$ by $p_3+ab-c$
we see that we may suppose that $p_2$ divides $p_3$. Without loss of
generality (change the variable $t$ to $t-\beta$, $\beta\in K$), we
may suppose that one of the roots of $p_2$ is $0$, so that
$p_2=(t+\alpha)t$ for some constant $\alpha$, and we have that $S_3$
has a basis of the required form, possibly after adding to $p_3$ a
scalar multiple of $p_2$.

Suppose now that $S_k$ has a basis of the required form, $3\leq k\leq n-1$, and
consider $S_{k+1}=S_k+Kp_{k+1}$. Without loss of generality suppose
$p_{k+1}$ has no constant term, i.e. is divisible by $t$ (replacing
$p_{k+1}$ by $p_{k+1}+c$, $c\in K$, does not change the space $S_{k+1}$).
Let $T_k$ be the subspace
of $S^2$ consisting of all polynomials in $t$ of degree at most
$k+1$. Now the set of degrees of $S^2$ is
$0,2,3,\ldots,2n$, which implies that $T_k$ cannot be equal to the
whole space of polynomials of degree at most $k+1$ so that $\dim T_k\leq k+1$.
Notice also that $T_k$ contains
\begin{equation}
  \label{eq:Sk}
  1,(t+\alpha)t,(t+\alpha)t^2,\ldots ,(t+\alpha)t^{k-1}
\end{equation}
which are all in $S_k$ by the induction hypothesis, and $T_k$
contains also 
$$(t+\alpha)^2t^{k-1} = (t+\alpha)t \times (t+\alpha)t^{k-2}.$$
Since $\dim T_k\leq k+1$, a basis of $T_k$ is therefore given by \eqref{eq:Sk} together with $(t+\alpha)^2t^{k-1}$. Now $p_{k+1}\in T_k$, so that it decomposes over the above basis,
and since $p_{k+1}$ has no constant term, we have just proved that
it is a multiple of $(t+\alpha)t$, which shows the existence of a basis of $S_{k+1}$ of the required form.
\end{proof}

Theorem~\ref{thm:g=0} shows in particular that there always exists a
valuation $v$, for which the set of valuations $v(S)$ of a space of
genus $0$ and combinatorial genus $1$ is an arithmetic
progression with a missing element (after the first or last position).
In contrast, the set of valuations for an arbitrary $v$ will typically
be an arithmetic progression. 
We now make the remark that when $g=1$ and $\gamma=1$, 
there also always exists a valuation $v$ for which $v(S)$ is an arithmetic
progression with a missing element.

Denote by $\sim$ the linear equivalence of divisors, and recall that
$G \sim H$
means that $L(G) = fL(H)$ for some function $f$.

\begin{lemma}\label{lem:one_point}
  Let $E$ be an elliptic curve and $G$ be a divisor on $E$ of degree $d$.
  Then, there exists a point $R$ of $E$ such that $G \sim dR$.
\end{lemma}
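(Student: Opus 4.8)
The plan is to use the group law on the elliptic curve $E$ together with the classical description of the divisor class group. Recall that once we fix a base point $O \in E$, there is a group isomorphism between $E(K)$ (with its geometric group law) and $\mathrm{Pic}^0(E)$, the group of degree-$0$ divisor classes, sending a point $R$ to the class of $R - O$. Concretely, a degree-$0$ divisor $\sum_i n_i P_i$ is principal if and only if $\sum_i n_i = 0$ (automatic here) and the ``sum'' $\bigoplus_i [n_i] P_i$ computed in the group law equals $O$.

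First I would reduce the statement to finding the right point. Write $G = \sum_i n_i P_i$ with $\deg G = \sum_i n_i = d$. I want a point $R$ with $G - dR \sim 0$, i.e. $G - dR$ principal. Since $\deg(G - dR) = 0$ automatically, by the description above this is equivalent to $\left(\bigoplus_i [n_i]P_i\right) \ominus [d]R = O$ in the group law, that is, $[d]R = \bigoplus_i [n_i]P_i =: R_0$, where $R_0$ is a fixed point of $E$ determined by $G$ and the choice of base point $O$.

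So the crux is: the multiplication-by-$d$ map $[d]\colon E \to E$ is surjective, hence there exists $R$ with $[d]R = R_0$. Over an algebraically closed field this surjectivity is standard: $[d]$ is a nonzero isogeny (it is nonconstant because $E$ has points of infinite order, or because its degree is $d^2 \neq 0$ in characteristic zero, and in general $[d]$ is known to be a nonconstant separable-or-inseparable isogeny of degree $d^2$), and a nonconstant morphism of smooth projective curves over an algebraically closed field is surjective. Pulling back $R_0$ gives the desired $R$, and then $G \sim dR$ by construction.

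The main obstacle, such as it is, is purely expository: one must be careful in small characteristic (for instance $p \mid d$), where $[d]$ may be inseparable, to still conclude surjectivity — but this follows simply from $[d]$ being nonconstant (equivalently, from $E$ not being annihilated by $d$, which holds since $E(K)$ contains points of infinite order when $K$ is algebraically closed, or directly from $\deg[d] = d^2$). No genuine difficulty arises; the lemma is essentially a restatement of the surjectivity of multiplication-by-$d$ on an elliptic curve over an algebraically closed field.
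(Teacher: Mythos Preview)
Your proof is correct and follows the same route as the paper: reduce $G \sim dR$ to solving $[d]R = R_0$ via the isomorphism $E(K)\cong\mathrm{Pic}^0(E)$, then invoke surjectivity of $[d]$ (the paper phrases this as divisibility of $E(K)$ and cites Silverman). One small caveat: your aside that $E(K)$ always contains points of infinite order fails over $\overline{\mathbb{F}_p}$, where every point is torsion, but your alternative justification via $\deg[d]=d^2>0$ is correct and suffices.
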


\begin{proof}
  Let $G = r_1 P_1 + \cdots + r_s P_s$. Denote by $\oplus$ the group law
  on the elliptic curve. Let $P = r_1P_1 \oplus \cdots \oplus r_s P_s$.
  From \cite[Proposition III.3.4]{Silverman}, we get
  $
  G - dO \sim P-O
  $
  where $O$ is the zero element of the group of points of $E$.
  
  Over an algebraically closed field the group of points of $E$ is divisible
  (see \cite[Theorem 4.10(a)]{Silverman}),
  hence there exists $R \in E$ such that $P = dR$. Therefore:
  $$
  G-dO \sim d(R-O)\quad \Longrightarrow \quad G \sim dR.
  $$
\end{proof}

\begin{rema}
  According to the proof, the point $R$ may not be unique since it can
  be replaced by the point $R \oplus T$ where $T$ is a $d$--torsion
  point. Thus, if $d$ is prime to the characteristic of $K$ there are
  $d^2$ possibilities for $R$.
\end{rema}

Consequently, $S$ is of the form $f L(nR)$
for some place $R$ and some nonzero function $f$. It is well--known
that the sequence of valuations of a space $L(nR)$ is $\{0, -2, -3, \ldots, -n\}$,
which can be easily derived from the Riemann-Roch
Theorem. Multiplication by $f$ only translates the sequence of valuations.

\section{{Function fields over non-algebraically closed
  fields}}\label{sec:perfect}

{
In this section we generalise Theorems~\ref{thm:freiman_field},~\ref{thm:genus0} and~\ref{thm:main} to
non-algebraically closed, perfect fields $K$.
Recall that a field $K$ is called {\em perfect} if all
  algebraic extensions of $K$ are separable.}

{
\begin{thm}\label{thm:freiman_perfect}
    Let $K$ be a perfect
  field and let $F \supseteq K$ be
  an extension field of $K$ such that $K$
  is algebraically closed in $F$. Let $S$ be a $K$-vector
  subspace of $F$ of finite dimension and of transcendence degree $d$.
  Then
  $$\dim S^2\geq (d+1)\dim S -d(d+1)/2.$$
\end{thm}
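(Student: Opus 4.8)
The plan is to reduce the perfect-field case to the algebraically closed case already established in Theorem~\ref{thm:freiman_field}. Let $\bar K$ be an algebraic closure of $K$, and consider the $\bar K$-algebra $F \otimes_K \bar K$. The key technical point is that, because $K$ is perfect and algebraically closed in $F$, the field $F$ is a regular extension of $K$, so $F \otimes_K \bar K$ is an integral domain; let $\bar F$ be its fraction field. Then $\bar S := S \otimes_K \bar K$ sits inside $\bar F$ as a $\bar K$-vector space with $\dim_{\bar K} \bar S = \dim_K S$, and likewise $\dim_{\bar K} \bar S^2 = \dim_K S^2$ since the multiplication map is defined over $K$ and a $K$-basis of $S^2$ remains a $\bar K$-basis of $\bar S^2$. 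One also checks that the transcendence degree of $\bar S$ over $\bar K$ equals $d$: this is exactly the transcendence degree of $\bar F$ over $\bar K$, which equals that of $F$ over $K$. Applying Theorem~\ref{thm:freiman_field} to $\bar S \subset \bar F$ over the algebraically closed field $\bar K$ then yields
$$\dim_K S^2 = \dim_{\bar K} \bar S^2 \geq (d+1)\dim_{\bar K}\bar S - d(d+1)/2 = (d+1)\dim_K S - d(d+1)/2,$$
which is the desired inequality.

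The steps, in order: first, recall or prove that a perfect field algebraically closed in $F$ makes $F/K$ a regular (in particular separable and primary) extension, so that $F \otimes_K \bar K$ is a domain — this is the standard characterisation of regular extensions and is where perfectness is used (for imperfect $K$ one can have $F/K$ inseparable even when $K$ is algebraically closed in $F$, and the tensor product acquires nilpotents). Second, set $\bar F = \mathrm{Frac}(F\otimes_K \bar K)$ and verify the dimension identities $\dim_{\bar K}(S\otimes_K\bar K) = \dim_K S$ and $\dim_{\bar K}((S\otimes_K\bar K)^2) = \dim_K(S^2)$; both follow from flatness of $\bar K$ over $K$ together with the observation that $(S\otimes \bar K)(S\otimes\bar K) = S^2\otimes\bar K$ inside $\bar F$. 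Third, check that transcendence degree is preserved under this base change. Fourth, invoke Theorem~\ref{thm:freiman_field} and conclude.

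The main obstacle is the first step: one must be careful that $F\otimes_K \bar K$ is genuinely an integral domain, as otherwise $\bar F$ is not defined and $\bar S$ could fail to embed into a field. This is precisely the role of the hypothesis that $K$ is perfect and algebraically closed in $F$ — together these say $F/K$ is regular, equivalently $F$ and $\bar K$ are linearly disjoint over $K$ inside any common extension, which gives both that the tensor product is a domain and that the dimension counts go through. A secondary, more routine point is making sure that passing to the fraction field does not change dimensions: since $S\otimes_K\bar K$ already consists of elements of the domain $F\otimes_K\bar K$ and is finite-dimensional over $\bar K$, no denominators are introduced, so $\bar S$ and $\bar S^2$ live in $F\otimes_K\bar K \subset \bar F$ with the expected dimensions. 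Everything else is a direct application of the algebraically closed case.
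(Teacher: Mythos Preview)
Your proposal is correct and follows essentially the same approach as the paper: reduce to the algebraically closed case by base-changing to $\bar K$, check that dimensions and transcendence degree are preserved, and invoke Theorem~\ref{thm:freiman_field}. The only cosmetic difference is that the paper works with the compositum $K'(S)$ inside an algebraic closure of $F$ and cites \cite[Proposition~III.6.1]{Stichtenoth} for the preservation of linear independence, whereas you phrase the same reduction via the tensor product $F\otimes_K\bar K$ and the regularity of $F/K$; these are two packagings of the same linear-disjointness fact.
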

}

{
\begin{proof}
  Let $K'$ be an algebraic closure of $K$ and $F'=K'(S)$ be defined
  inside the algebraic closure of $F$.  It holds that any $K$-linearly
  independent elements of $F$ are also $K'$-linearly independent in
  $F'$.  (\cite[Proposition III.6.1]{Stichtenoth}\footnote{The context
    of the proposition is that of functions fields of one variable,
    but its proof applies verbatim to arbitrary field extensions.}).
  Therefore $\dim_KS=\dim_{K'}K'S$ and $\dim_KS^2=\dim_{K'}K'S^2$, and
  the theorem is proved by
  arguing that the transcendence degree of $S$ is the same over $K'$
  as over $K$, and that therefore
$$\dim_{K'}K'S^2\geq (d+1)\dim_{K'}K'S -d(d+1)/2$$
by applying Theorem~\ref{thm:freiman_field}.
\end{proof}
}

{
We remark that the above proof has only used that
any finite extension of $K$ is generated by a single element,
so that Theorem~\ref{thm:freiman_perfect} actually holds 
in this somewhat more general case.
}

{
\begin{thm}\label{thm:perfect}
  Let $K$ be a perfect field, algebraically closed in an extension
  field $F$.
  Let $S \subseteq F$, $1\in S$, be a space of finite dimension $n$
  and combinatorial genus $\gamma$. 
  \begin{enumerate}
  \item If $n\geq 3$ and $\gamma=0$, then $S$ has genus $0$ and
    $S=L(D)$ for $D$ a divisor of degree $n-1$.
  \item If $n\geq 4$ and $\gamma=1$ 
   then $S$ has genus $0$ or $1$. Moreover, 
  \begin{itemize}
    \item[(i)]  if $S$ has genus $1$  then $S=L(D)$ for $D$ a divisor of degree $n$,
    \item[(ii)] if $S$ has genus $0$, then $S$ is a subspace of codimension
      $1$ inside a space $L(D)$ for $D$ a divisor of degree $n$.
  \end{itemize}
  \end{enumerate}
 \end{thm}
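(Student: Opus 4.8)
The plan is to reduce to the algebraically closed case treated in Theorems~\ref{thm:genus0} and~\ref{thm:main} by means of a constant field extension, exactly in the spirit of the proof of Theorem~\ref{thm:freiman_perfect}, and then to descend the resulting divisor back to $K$. The descent is made painless by the observation that the divisor one obtains over the algebraic closure may always be taken to be the minimal divisor $D_{S'}$ of Definition~\ref{def:DU}, and that $D_{S'}$ is the conorm of the corresponding minimal divisor $D_S$ on $F$; in particular no Galois descent of divisors is needed.

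In detail, I would first make the usual reductions, assuming $1\in S$ and $F=K(S)$, noting that $K$ is still algebraically closed in this smaller field. Let $K'$ be an algebraic closure of $K$ and, inside an algebraic closure of $F$, set $F':=K'F=K'(S)$ and $S':=K'S\subseteq F'$. Since $K$ is perfect, $F/K$ is separable, and together with $K$ being algebraically closed in $F$ this makes $F'/F$ a constant field extension whose exact constant field is $K'$ and whose genus equals that of $F/K$ \cite[Theorem 3.6.3]{Stichtenoth}. By \cite[Proposition III.6.1]{Stichtenoth}, $K$-linearly independent families of $F$ remain $K'$-linearly independent in $F'$, so $\dim_{K'}S'=\dim_K S=n$; and since $(S')^2=K'\,S^2$ we also get $\dim_{K'}(S')^2=\dim_K S^2$. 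Hence $S'$ has the same combinatorial genus $\gamma$ as $S$, and the genus of $S'$ (that is, of $F'$) coincides with that of $S$ (that is, of $F$). Applying Theorem~\ref{thm:genus0} if $\gamma=0$ and Theorem~\ref{thm:main} if $\gamma=1$ to the pair $S'\subseteq F'$ over the algebraically closed field $K'$, I conclude that $F$ has genus $0$ or $1$ and that either $S'=L_{F'}(D')$ or $S'$ has codimension $1$ in $L_{F'}(D')$, for a divisor $D'$ on $F'$ of the appropriate degree.

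Next comes the descent. The key point is that the minimal divisor $D_{S'}$ of $S'$ equals $\mathrm{Con}_{F'/F}(D_S)$: since $F'/F$ is unramified, $v_P(s)=v_{P_0}(s)$ for $s\in F$ whenever $P$ lies over $P_0$, so $\min v_P(S')=\min v_P(K'S)=\min v_{P_0}(S)$, which is exactly the coefficient of $P$ dictated by the conorm of $D_S$. Because $D_{S'}\leq D'$ and $S'\subseteq L_{F'}(D_{S'})\subseteq L_{F'}(D')$, in the cases where $S'=L_{F'}(D')$ we obtain $S'=L_{F'}(D_{S'})$, and in the remaining case (genus $0$, $\gamma=1$) we obtain $S'$ of codimension $1$ in $L_{F'}(D_{S'})$; in that last case $\deg D_{S'}$ lies in $\{n-1,n\}$, and it cannot be $n-1$, for then $S'=L_{F'}(D_{S'})$ and Theorem~\ref{thm:genus0} would force $\gamma=0$, so $\deg D_{S'}=n$ there as well. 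Since the conorm preserves degrees, $\deg D_S=\deg D_{S'}$ takes the value required by the theorem in each case. Finally a $K$-basis of $L_F(D_S)$ is a $K'$-basis of $L_{F'}(\mathrm{Con}_{F'/F}(D_S))=L_{F'}(D_{S'})$ \cite[Theorem 3.6.3]{Stichtenoth}, so $\dim_K L_F(D_S)=\dim_{K'}L_{F'}(D_{S'})$; as $S\subseteq L_F(D_S)$ by definition of $D_S$, a dimension comparison gives $S=L_F(D_S)$ when $\dim L_{F'}(D_{S'})=n$ and $S$ of codimension $1$ in $L_F(D_S)$ when $\dim L_{F'}(D_{S'})=n+1$. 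This is precisely the three-part conclusion of the theorem.

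The only real work, and the step I expect to be the main obstacle, is verifying that the constant field extension $F'/F$ behaves as asserted: that $F/K$ is separable (hence regular) because $K$ is perfect, that $K'$ stays the full constant field of $F'$, that the genus is unchanged, and that the conorm is degree-preserving. All of these are standard and I would simply cite \cite[Section 3.6]{Stichtenoth}; once they are granted, the identity $D_{S'}=\mathrm{Con}_{F'/F}(D_S)$ together with the dimension count above dispose of everything else.
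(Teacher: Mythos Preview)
Your proof is correct and follows essentially the same route as the paper's: extend scalars to the algebraic closure $K'$, apply Theorems~\ref{thm:genus0} and~\ref{thm:main} to $K'S$, establish $D_{K'S}=\mathrm{Con}_{F'/F}(D_S)$ via unramifiedness of the constant field extension, and invoke \cite[Theorem~III.6.3]{Stichtenoth} to transfer the genus, the degree of the divisor, and the dimension of the Riemann--Roch space back to $F/K$. You are in fact slightly more explicit than the paper about one point it leaves implicit, namely that the divisor $D'$ produced by Theorem~\ref{thm:main} may be replaced by the minimal divisor $D_{K'S}$ without dropping to degree $n-1$ in the genus~$0$, $\gamma=1$ case.
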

}

{
Before proving Theorem~\ref{thm:perfect}, let us remind the reader of
some basic facts concerning algebraic extensions of function fields
that we need to call upon. We refer the reader to
\cite[Chapter III]{Stichtenoth} for more background.
Let $F'/K'$ be an algebraic
extension of $F/K$, meaning that $F'\supset F$ is an algebraic
extension and that $K'\supset K$.
Recall that if $P$ is a place of $F$ and $P'$ a place of $F'$ such
   that $P=F\cap P'$, $P$ is said to {\em lie under} $P'$ and $P'$ to {\em lie
   over} $P$. One writes $P'|P$ to mean that $P'$ lies over $P$.
   For any place $P$ of $F$,  there always exists at least a place
   $P'$ over $P$ (\cite[Proposition III.1.7]{Stichtenoth}) ,
   and for any such $P$ and $P'$ there exists 
   (\cite[Proposition III.1.4]{Stichtenoth}) an
   integer $e=e(P'|P)\geq 1$ such that $v_{P'}(x)=e\cdot v_P(x)$ for any
   $x\in F$.  The positive integer $e(P'|P)$ is called the
   {\em ramification index} of $P'$ over $P$. The {\em conorm}, with respect
   to $F'/F$, of a place
   $P$ of $F$ is defined as the divisor:
   $$Con_{F'/F}(P) = \sum_{P'|P}e(P'|P)P'.$$
   The conorm extends to divisors $D=\sum_P\alpha_PP$  of $F$ through
   the formula
   $$Con_{F'/F}(D) = \sum_P\alpha_P Con_{F'/F}(P).$$
}
{
  \begin{proof}[Proof of Theorem~\ref{thm:perfect}]
  Without loss of generality assume $F=K(S)$.
  Let $K'$ be the algebraic closure of $K$ and let $F'=K'(S)$ be
  defined inside the algebraic closure of $F$. 
  We note that such an extension $F'/F$ is unramified (\cite[Theorem
  III.6.3(a)]{Stichtenoth}), meaning that $e(P'|P)=1$ for any place
  $P$ in $F$ and any $P'$ above it.\\
  \indent
  As remarked at the end
  of the proof of Theorem~\ref{thm:freiman_perfect}, $K$-linearly
  independent elements of $F$ are also $K'$-linearly independent in
  $F'$,
  therefore $\dim_KS=\dim_{K'}K'S$ and $\dim_KS^2=\dim_{K'}(K'S)^2$,
  so that $K'S$ has combinatorial genus $\gamma$. Next,
  Theorems~\ref{thm:genus0} (case $\gamma=0$) and~\ref{thm:main} (case
  $\gamma=1$) apply to $K'S$ in the extension $F'/K'$.
  \newline
  \indent Recall (Definition~\ref{def:DU}) that 
   $$D_S =\sum_{P,\, \text{place of}\, F}-\min v_P(S)P$$
   is such that $L(D_S)$ is the Riemann-Roch space in $F$ of smallest
   dimension that contains~$S$. By \cite[Theorem
   III.6.3(b)]{Stichtenoth}, $F/K$ and $F'/K'$ have the same genus
   $g$. It remains therefore only to prove that
   \begin{equation}
     \label{eq:dimL(D_S)}
     \dim_KL(D_S) = \dim_{K'}L(D_{K'S}).
   \end{equation}
   Let $P$ be any place in the support of $D_S$.
   For any
   place $P'$
   above $P$ we have $$v_{P'}(s) = e(P'|P)v_P(s)=v_P(s)$$ (since $F'/F$
   is unramified), therefore
   $P'$ appears in the support of $D_{K'S}$. Furthermore,
   $v_{P'}(s)=v_P(s)$ for every $s\in S$, so that
   $$\min_{s\in S}v_P(s) = \min_{s\in S}v_{P'}(s) = \min_{x\in
     K'S}v_{P'}(x)$$
   since any $K'$-linear combination of elements of $S$ has a
   $P'$-valuation at least equal to $\min_{s\in S}v_{P'}(s)$.
   Therefore, the coefficient in $D_{K'S}$ of every place $P'$ above
   $P$ equals exactly the coefficient of $P$ in $D_S$.\\
   \indent
   Since any
   place $P'$ of $F'$ has a unique place $P$ lying under it in $F$, we
   deduce that we have
   $$Con_{F'/F}(D_S) = D_{K'S}$$
   from which \eqref{eq:dimL(D_S)} follows by
   \cite[Theorem III.6.3(d)]{Stichtenoth}.
   \end{proof}
}

{
We conclude by remarking that when $K$ is not algebraically closed, statement 1 of
Theorem~\ref{thm:perfect} is the correct generalisation of
Theorem~\ref{thm:genus0}. Indeed, there exist spaces $S$ of combinatorial
genus $0$ in extensions $F/K$, where $K$ is algebraically closed in
$F$, and such that $S$ does not have a basis in geometric
progression. One such example, given in \cite{bszVosper}, is obtained
by considering the field $F=\Q(x,y)$, where $\Q$ denotes the rational
field, and $y$ is algebraic over
$\Q(x)$ such that $y^2+x^2+1=0$. We have that $\Q$ is
algebraically closed in $F$, and in the extension $F/\Q$, the space
$S$ generated by $1,x,y$ has combinatorial genus $0$ but can be seen
not to have a basis in geometric progression. The space $S$ is however
equal to a Riemann-Roch space $L(P)$, where $P$ is a place of degree
$2$. When one extends the base field $\Q$ to the complex field $\C$, 
we have that $\C S$ has the basis $t^{-1},1,t$, where
$t=x+iy$,$t^{-1}=-x+iy$. Hence $\C S= L(P_0+P_{\infty})$, where $P_0$
and $P_\infty$ are the places at $0$ and at $\infty$ in $\C F=\C(t)$,
and are the two places that lie above $P$ in $\C F$.
}

{Finally, we remark that the argument spelt out in the
proof of Theorem~\ref{thm:perfect} shows that if
Conjecture~\ref{conjecture} holds, then it also holds for perfect
base fields.}

\section*{Acknowledgements}
The authors are supported by French {\em Agence nationale de la recherche} grant ANR-15-CE39-0013-01 {\em Manta}.

\bibliographystyle{abbrv}
\bibliography{freiman}

\end{document}